\documentclass[reqno]{amsart}
\usepackage[margin = 1in]{geometry}
\usepackage{amsmath, amssymb, amsthm, fancyhdr, verbatim, graphicx}
\usepackage{enumerate}
\usepackage[all]{xy}
\usepackage[dvipsnames]{xcolor}
\usepackage{mathrsfs}
\usepackage{tikz-cd}
\usepackage{framed}
\usepackage[hidelinks, pagebackref]{hyperref}
\hypersetup{colorlinks=true, linkcolor=magenta,citecolor=cyan} 
\usepackage[titletoc]{appendix}
\usepackage{bbm}
\usepackage{lipsum}
\usepackage{adjustbox}
\usepackage{commands}
\usepackage{envi}
\usepackage{stmaryrd}
\usepackage{leftindex}
\usepackage[english]{babel}
\usepackage{dynkin-diagrams}

\numberwithin{equation}{section}

\makeatletter
\def\th@remark{%
  \thm@headfont{\bfseries}%
  \normalfont 
  \thm@preskip \thm@preskip 
  \thm@postskip\thm@preskip
}
\def\imod#1{\allowbreak\mkern5mu({\operator@font mod}\,\,#1)}
\makeatother

\numberwithin{equation}{section}

\title{Diagonal cycles on Shtukas and the adjoint $L$-function}
\author{Zeyu Wang}
\address{Massachusetts Institute of Technology, Department of Mathematics, 77 Massachusetts Avenue, Cambridge, MA 02139, USA}
\email{wangzeyu@mit.edu}

\begin{document}

\begin{abstract}
We study diagonal cycles on moduli spaces of shtukas for groups of the form $H\times H$, where $H$ is a split almost simple reductive group, allowing arbitrary modification types. We relate their self-intersection numbers, with insertions of determinant line bundles, to higher derivatives of adjoint $L$-functions. This gives a general Gross--Zagier-type identity over function fields for groups of arbitrary type, and suggests a parallel conjectural picture for arithmetic intersections on Shimura varieties.
\end{abstract}

\maketitle
\tableofcontents

\section{Introduction}
\subsection{Motivation}

The Langlands program relates automorphic representations of a reductive group $G$ to Galois representations valued in the Langlands dual group $\Gc$. More generally, the relative Langlands program predicts that period integrals of automorphic forms associated to a $G$-variety $X$ should be related to special values of $L$-functions attached to the corresponding Galois representations and controlled by a dual $\Gc$-variety $\Xc$. While many instances of this phenomenon have been studied for decades, a systematic conceptual framework has only recently emerged in the celebrated work of Ben-Zvi--Sakellaridis--Venkatesh \cite{BZSV}.

The Gross--Zagier formula proved in \cite{gross1986heegner}, however, suggests the existence of an \emph{arithmetic} analogue of the relative Langlands program. It relates the N\'eron--Tate height pairing of Heegner points on elliptic curves to the central derivative of the associated $L$-function. This may be viewed as the case $G=\PGL_2$ and $X=G/H$, where $H$ is a non-split torus of $G$, with period integrals replaced by arithmetic intersection pairings of special cycles on Shimura varieties, and special values of $L$-functions replaced by special values of derivatives of $L$-functions. Various higher-dimensional generalizations of the Gross--Zagier formula have since been proposed and studied, most notably the arithmetic Gan--Gross--Prasad conjectures formulated in \cite{gan2012symplectic}. These conjectures, together with their existing generalizations, are, however, largely restricted to groups of type $A$, $B$, or $D$, and to situations in which the representation $V\in \Rep(\Gc)$ governing the Shimura variety is the standard representation. 

The celebrated work of Yun--Zhang \cite{YZ1} established a function field analogue of the Gross--Zagier formula by replacing Shimura varieties with moduli of Shtukas. In this setting, intersection numbers of special cycles on moduli of Shtukas are related to higher derivatives of $L$-functions. In the works \cite{liu2025higherperiodintegralsderivatives,wang2025specialcycleshtukascategorical}, a conceptual generalization of the Yun--Zhang formula was proposed for split groups and affine homogeneous strongly tempered spherical varieties, and proved in the case $G=\GL_n\times \GL_{n-1}$ and $X=\GL_n$. Although conceptually appealing, the framework of \cite{liu2025higherperiodintegralsderivatives,wang2025specialcycleshtukascategorical} is limited to the strongly tempered setting, which imposes a strong restriction on the representation $V\in \Rep(\Gc)$ governing the modification type of the Shtukas: namely, $V$ must occur as a direct summand of $T^*\Xc$, viewed as a $\Gc$-representation when $X$ is strongly tempered. Moreover, affine homogeneous strongly tempered spherical varieties are rather rare; see \cite{wan2025periods}.

In this article, we study the group case $G=H\times H$ and $X=H$, where $H$ is a split almost simple group. We consider the diagonal cycle in the moduli of $G$-Shtukas, allowing essentially arbitrary representations $V\in \Rep(\Gc)$ to govern the modification type. We study the intersection pairing of the diagonal cycle with itself together with the determinant line bundle, and relate it to special values of higher derivatives of adjoint $L$-functions associated to Galois representations. As far as we know, this provides the first instance of a Gross--Zagier-type formula for groups of arbitrary types.

\subsection{Main result}

Let $C$ be a smooth proper geometrically connected curve with genus $g$ over a finite field $\FF_q$ of characteristic $p$, and let $G$ be a split semisimple group over $\FF_q$. Fix a maximal torus $T\subset G$, and let $\Gc$ denote the Langlands dual group of $G$ over $k=\Qlbar$, where $\ell\neq p$. Throughout, we work with \'etale sheaves with $k$-coefficients. We assume that $p$ satisfies the hypothesis in \cite[\S0.1.9]{gaitsgory2025geometriclanglandspositivecharacteristic}, which in particular holds for sufficiently large $p$.

Fix an integer $r\geq 0$ and set $I=\{1,\dots,r\}$. For a sequence of dominant coweights $\lambda_I=(\lambda_1,\dots,\lambda_r)$ of $G$, let $\Sht_{G,\leq\lambda_I}$ denote the moduli stack of $G$-Shtukas with modifications bounded by $\lambda_I$. It is a Deligne--Mumford stack locally of finite type over $C^I$ of relative dimension
\[
d_{\lambda_I}=\sum_{i=1}^r \langle 2\rc,\lambda_i\rangle,
\]
where $2\rc$ is the sum of positive roots of $G$. We study the intersection cohomology with compact support
\[
IH_c^*(\Sht_{G,\leq\lambda_I}).
\]

Let $\cO(\Loc_{\Gc}^{\arith})$ denote the algebra of excursion operators. Given a $\Gc$-Weil local system $\s$ on $C$, let
\[
\chi_{\s}:\cO(\Loc_{\Gc}^{\arith})\to k
\]
be the associated character. By \cite{arinkin2022automorphicfunctionstracefrobenius}, the algebra $\cO(\Loc_{\Gc}^{\arith})$ acts canonically on $IH_c^*(\Sht_{G,\leq\lambda_I})$. We may therefore consider the $\s$-isotypic component
\[
IH_c^*(\Sht_{G,\leq\lambda_I})_{\s}
\subset
IH_c^*(\Sht_{G,\leq\lambda_I}),
\]
on which $\cO(\Loc_{\Gc}^{\arith})$ acts through the character $\chi_{\s}$.

We are primarily interested in the case where $\s$ is geometrically strongly irreducible.

\begin{defn}[Strongly irreducible local systems]\label{defn:stronglyirreducible}
~
\begin{itemize}
    \item A $\Gc$-local system $\overline{\s}$ on $C_{\overline{\FF_q}}$ is called \emph{strongly irreducible} if it is irreducible and its centralizer in $\Gc$ is equal to the center $Z(\Gc)$.
    
    \item A $\Gc$-Weil local system $\s$ on $C$ is called \emph{geometrically strongly irreducible} if the induced $\Gc$-local system $\overline{\s}$ on $C_{\overline{\FF_q}}$ is strongly irreducible.
\end{itemize}
\end{defn}
When $\s$ is geometrically irreducible, the $\s$-isotypic component $IH_c^*(\Sht_{G,\leq\lambda_I})_{\s}$ is a finite-dimensional vector space.

\subsubsection{Intersection pairing}

To simplify the notation in the introduction, we suppress Tate twists. There is a natural intersection pairing
\begin{equation}\label{eq:intpairingintro}
    \langle -, - \rangle_{\lambda_I} :
    IH_c^*(\Sht_{G,\leq\lambda_I})^{\otimes 2}
    \to k.
\end{equation}
See \eqref{eq:intpairing} for a more precise formulation. This pairing restricts to the isotypic components:
\begin{equation}\label{eq:intpairingisotypicintro}
    \langle -, - \rangle_{\lambda_I,\s} :
    IH_c^*(\Sht_{G,\leq\lambda_I})_{\s}
    \otimes
    IH_c^*(\Sht_{G,\leq\lambda_I})_{c_{\Gc}(\s)}
    \to k,
\end{equation}
where $c_{\Gc}:\Gc\to\Gc$ denotes the Cartan involution. Our first result establishes the nondegeneracy of this pairing for geometrically strongly irreducible local systems.

\begin{thm}[Corollary \ref{cor:intnondegenerate}]\label{thm:intnondegintro}
    For every geometrically strongly irreducible $\Gc$-Weil local system $\s$, the pairing \eqref{eq:intpairingisotypicintro} is nondegenerate.
\end{thm}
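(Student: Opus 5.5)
The plan is to reduce nondegeneracy to the statement that, on the $\s$-isotypic part, compactly supported intersection cohomology agrees with ordinary intersection cohomology. Once that is known, Verdier (Poincaré) duality finishes the argument.

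Concretely, the pairing \eqref{eq:intpairingintro} factors through the forgetful map
\[
IH_c^*(\Sht_{G,\leq\lambda_I}) \to IH^*(\Sht_{G,\leq\lambda_I})
\]
followed by the perfect Verdier duality pairing
\[
IH_c^*(\Sht_{G,\leq\lambda_I}) \otimes IH^{*}(\Sht_{G,\leq\lambda_I}) \to k,
\]
suitably shifted and twisted. Perfectness holds because $IC$ is Verdier self-dual up to replacing $\lambda_I$ by $-w_0\lambda_I$. That replacement is also where the Cartan involution $c_{\Gc}$ enters, through its compatibility with the excursion action. So the first step is to write $\langle -,-\rangle_{\lambda_I,\s}$ as the composite
\[
IH_c^*(\Sht_{G,\leq\lambda_I})_{\s} \otimes IH_c^*(\Sht_{G,\leq\lambda_I})_{c_{\Gc}(\s)} \to IH_c^*(\Sht_{G,\leq\lambda_I})_{\s} \otimes IH^*(\Sht_{G,\leq\lambda_I})_{c_{\Gc}(\s)} \to k.
\]
Here one checks that Verdier duality intertwines the action of an excursion operator $f$ with that of $f\circ c_{\Gc}$, possibly up to the antipode. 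This is a compatibility I expect to extract from the construction in \cite{arinkin2022automorphicfunctionstracefrobenius}. Since the second arrow is perfect on isotypic pieces, it suffices to show that $IH_c^*(\Sht_{G,\leq\lambda_I})_{\s} \to IH^*(\Sht_{G,\leq\lambda_I})_{\s}$ is an isomorphism for every geometrically strongly irreducible $\s$; the same then applies to $c_{\Gc}(\s)$, which is again geometrically strongly irreducible.

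The key step, and the main obstacle, is this comparison of $IH_c$ with $IH$ on isotypic components. I would follow the strategy of the "cuspidal = compactly supported" results (V. Lafforgue, Xue): the cone of $IH_c\to IH$ is built from boundary contributions of the Harder--Narasimhan truncations. These are controlled by constant term functors to Levi subgroups $M\subsetneq G$. By compatibility of excursion operators with constant terms (geometric Satake restricted along $\check M\subset \Gc$), the excursion algebra acts on each boundary piece through characters coming from $\check M$-local systems, that is, through $\Gc$-local systems whose image lies in a proper parabolic $\check P$.

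It then remains to show that $\s$ is not of this form. Geometric irreducibility of $\s$, which is contained in strong irreducibility, rules out factorization through any proper parabolic, even after semisimplification, since irreducible means not contained in a proper parabolic. Hence the $\s$-isotypic part of every boundary term vanishes. The technical difficulty is that $\Sht$ is only locally of finite type, so one must handle a limit over truncation parameters. Finite-dimensionality of the isotypic component, noted after Definition \ref{defn:stronglyirreducible}, should let me pass to a large enough truncation where the isotypic parts stabilize, as in Xue's finiteness argument.

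The strong irreducibility (centralizer equal to $Z(\Gc)$) is presumably only needed to ensure that the isotypic component is a clean direct summand, that is, that $\s$ defines a reduced isolated point of $\Loc_{\Gc}^{\arith}$ up to the center. This lets the generalized-eigenspace decomposition commute with duality.
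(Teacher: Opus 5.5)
\textbf{Assessment: there is a genuine gap at your key step.} Your first reduction is correct but does not shrink the problem. Since $\IC_{\lambda_I}$ on $\Sht_{G,I}$ is Verdier self-dual, $IH^*(\Sht_{G,\leq\lambda_I})$ is the full linear dual of $IH_c^*(\Sht_{G,\leq\lambda_I})$. (No $-w_0$ is needed at that point; the Cartan involution enters only through how duality interacts with excursion operators.) Consequently, the $\s$-part of $IH^*$ for the transported action is the dual of the $c_{\Gc}(\s)$-part of $IH_c^*$. So the assertion that $IH_c^*(\Sht_{G,\leq\lambda_I})_{\s}\to IH^*(\Sht_{G,\leq\lambda_I})_{\s}$ is an isomorphism is exactly the nondegeneracy statement, rewritten.

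Everything therefore rests on your boundary argument, and that is where the proposal breaks down.
\begin{itemize}
\item There is no compactification of $\Sht_{G,\leq\lambda_I}$, hence no boundary long exact sequence computing the cone of $IH_c^*\to IH^*$.
\item The Harder--Narasimhan truncations are not stable under Hecke or excursion operators, and $IH^*$ is an inverse limit over them. So ``isotypic parts of truncations stabilizing'' has no meaning on the $IH^*$ side.
\item The results of V.~Lafforgue and Xue concern $IH_c^*$ only. Via constant terms they show, at best, that an elliptic $\s$-part is cuspidal, hence Hecke-finite and coming from $IH_c^*$ of a fixed quasi-compact truncation $\Sht^{\leq\mu}$.
\item But $\Sht^{\leq\mu}$ is not proper, so such a class can still die in $IH^*$. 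Ruling this out is precisely the content of the theorem.
\end{itemize}

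The paper never compares $IH_c^*$ with $IH^*$ on $\Sht$. It uses an eigensheaf $\cF_\s$ to build $\xi_{\s,I}:V_{\lambda_I,\s}\to(l_{I,!}\IC_{\lambda_I})_\s$. Via the categorical trace (Proposition \ref{prop:cycle=trace}(3)), it then shows that the pulled-back pairing equals $\tr(\Frob,\Gamma_c(\Bun_G,\cF_\s\otimes\mathbb{D}^{\mathrm{Ver}}\cF_\s))\cdot\counit_{V_{\lambda_I,\s}}$, a scalar times a perfect pairing. The scalar is computed from $\End^*(\cF_{\s_H})\cong\wedge^*H^1(C,\frhc_{\s_H})$ together with the strange functional equation on $\Bun_H$, as in Proposition \ref{prop:dim}. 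In the setting of the body this gives $|\pi_1(H)|^2L(1,\frhc_{\s_H})^2$, which is nonzero because $\frhc_{\s_H}$ is pure of weight $0$. The same computation yields injectivity, hence bijectivity, of $\xi_{\s,I}$. Thus the only $!$-versus-$*$ comparison needed takes place on $\Bun_H$ for an eigensheaf, where miraculous duality is available. To rescue your route you would need an independent description of $\ker(IH_c^*\to IH^*)$ on $\Sht$ that forces it to be Eisenstein, and this does not follow from the constant-term results you cite.
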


\begin{remark}
    One can remove the semisimplicity assumption on $G$ by restricting to a single connected component of $\Sht_{G,\leq\lambda_I}$. In the case $G=\GL_n$, such an analogous statement was proved in \cite[Corollary~6.13]{wang2025specialcycleshtukascategorical}.
\end{remark}

\subsubsection{Diagonal cycle and the adjoint $L$-function}

We now specialize to the case $G=H\times H$, where $H$ is a split almost simple group over $\FF_q$. Fix a maximal torus $T_H\subset H$, and set $T=T_H\times T_H$.

Let $\lambda_{H,I}=(\lambda_{H,1},\dots,\lambda_{H,r})\in X_*(T_H)^I_+$ be a sequence of dominant coweights of $H$. We associate to it the sequence
\[
\lambda_I=(\lambda_{H,I},\lambda_{H,I})\in X_*(T)^I_+.
\]
The diagonal embedding $H\hookrightarrow H\times H$ induces a morphism
\begin{equation}
    \Delta_{\Sht}:\Sht_{H,\leq\lambda_{H,I}}
    \to
    \Sht_{G,\leq\lambda_I}.
\end{equation}
It defines a Borel--Moore homology class
\begin{equation}
    \Delta_{\Sht,!}[\Sht_{H,\leq\lambda_{H,I}}]
    \in
    IH_c^{d_{\lambda_I}+2r}(\Sht_{G,\leq\lambda_I})^*,
\end{equation}
which we call the \emph{diagonal cycle} on the moduli of Shtukas.

For each $i\in I$, let
\[
\cL_{\det,i}\in \Pic(\Sht_{H,\leq\lambda_{H,I}})_{\QQ}
\]
denote the determinant line bundle associated to the $i$-th leg. We also consider the class
\begin{equation}
    \Delta_{\Sht,!}\bigl([\Sht_{H,\leq\lambda_{H,I}}]
    \cup
    \prod_{i=1}^r c_1(\cL_{\det,i})\bigr)
    \in
    IH_c^{d_{\lambda_I}}(\Sht_{G,\leq\lambda_I})^*.
\end{equation}

Let $\s_H$ be a $\Hc$-Weil local system on $C$, and define the $\Gc$-Weil local system
\[
\s=(\s_H,c_{\Hc}(\s_H)),
\]
where $c_{\Hc}$ denotes the Cartan involution of $\Hc$. We are interested in the $\s$-isotypic components of the above cycle classes:
\begin{equation}
    (\Delta_{\Sht,!}[\Sht_{H,\leq\lambda_{H,I}}])_{\s}
    \in
    IH_c^{d_{\lambda_I}+2r}(\Sht_{G,\leq\lambda_I})_{\s}^*
\end{equation}
and
\begin{equation}
    \Bigl(
    \Delta_{\Sht,!}\bigl([\Sht_{H,\leq\lambda_{H,I}}]
    \cup
    \prod_{i=1}^r c_1(\cL_{\det,i})\bigr)
    \Bigr)_{\s}
    \in
    IH_c^{d_{\lambda_I}}(\Sht_{G,\leq\lambda_I})_{\s}^*.
\end{equation}

By Theorem \ref{thm:intnondegintro}, when $\s$ is geometrically strongly irreducible, the pairing \eqref{eq:intpairingisotypicintro} induces a nondegenerate dual pairing
\begin{equation}\label{eq:intpairingisotypicintrodual}
    \langle -, -\rangle_{\lambda_I,\s}^* :
    IH_c^*(\Sht_{G,\leq\lambda_I})_{\s}^*
    \otimes
    IH_c^*(\Sht_{G,\leq\lambda_I})_{c_{\Gc}(\s)}^*
    \to k.
\end{equation}

The following theorem is the main result of this article.

\begin{thm}\label{thm:main}
Assume that $\s$ is geometrically strongly irreducible and that
$\Sht_{H,\leq\lambda_{H,I}}\neq\varnothing$. Suppose moreover that
$\s_H$ is automorphic.\footnote{That is, there exists a compactly supported
Hecke eigenfunction $f_{\s_H}$ on $\Bun_H(\FF_q)$ with eigenvalue $\s_H$.}
Then
\begin{equation}
\begin{aligned}
&
\Bigl\langle
\Bigl(
\Delta_{\Sht,!}\bigl([\Sht_{H,\leq\lambda_{H,I}}]
\cup
\prod_{i=1}^r c_1(\cL_{\det,i})\bigr)
\Bigr)_{\s},
\bigl(\Delta_{\Sht,!}[\Sht_{H,\leq\lambda_{H,I}}]\bigr)_{c_{\Gc}(\s)}
\Bigr\rangle_{\lambda_I,\s}^*
\\
&=
\frac{\epsilon_{\lambda_I}}{L(0,\frhc_{\s_H})}
\left(
\frac{1}{\log q}\frac{d}{ds}
\right)^r
\left(
q^{(g-1)\dim(\frhc) s}
L(s,\frhc_{\s_H})
\right)\Big|_{s=0},
\end{aligned}
\end{equation}
where
\begin{itemize}
    \item $L(s,\frhc_{\s_H})$ is the adjoint $L$-function associated to $\s_H$;

    \item $\dim(\frhc)$ is the dimension of the Lie algebra $\frhc$;

    \item $\epsilon_{\lambda_I}=\prod_{i=1}^r\epsilon_{\lambda_i}$, where
    $\epsilon_{\lambda_i}$ are the constants defined in \eqref{eq:epsilonformula}.
\end{itemize}
\end{thm}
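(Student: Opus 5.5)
Write $\mathsf{LHS}$ for the left-hand side of Theorem~\ref{thm:main}. The plan is to compute $\mathsf{LHS}$ spectrally, via the categorical trace formalism of \cite{arinkin2022automorphicfunctionstracefrobenius}, in the spirit of \cite{wang2025specialcycleshtukascategorical}. This reduces it to a computation on the derived stack $\Loc_{\Gc}^{\arith}$ near the point $\s$.

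\textbf{Step 1: isotypic components as fibres.} By Theorem~\ref{thm:intnondegintro} we may identify $IH_c^*(\Sht_{G,\leq\lambda_I})_{\s}$ with a fibre of the coherent sheaf on $\Loc_{\Gc}^{\arith}$ attached to $V_{\lambda_I}=\boxtimes_i V_{\lambda_i}$. Since $\s$ is geometrically strongly irreducible, a formal neighbourhood of $\s$ in $\Loc_{\Gc}^{\arith}$ is a quasi-smooth derived scheme modulo $Z(\Gc)$. Its tangent complex is $C^*(C,\frgc_{\s})[1]$, and $\frgc_{\s}=\frhc_{\s_H}\oplus\frhc_{c(\s_H)}$. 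In this neighbourhood the automorphic sheaf is free of rank one, generated by the vector $f_{\s_H}\boxtimes f_{\s_H}$; this is where the automorphy hypothesis enters. The $\s$-isotypic component is then the fibre $V_{\lambda_I}\otimes \mathcal{O}_{\mathrm{fib}}$. Here $\mathcal{O}_{\mathrm{fib}}$ is the derived fibre, an exterior algebra on $H^1(C,\frgc_{\s})$ shifted appropriately, which supplies the higher cohomological degrees.

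\textbf{Step 2: the diagonal cycle as the relative period.} Pushing forward the fundamental class along $\Delta_{\Sht}$ corresponds, under the trace formalism, to the period sheaf of $X=H$. For the group case this is the structure sheaf of the diagonal $\Loc_{\Hc}\hookrightarrow\Loc_{\Hc}\times\Loc_{\Hc}$, twisted by $c_{\Hc}$ on the second factor. Its $V_{\lambda_I}$-component is the canonical invariant vector in $V_{\lambda_{H,I}}\otimes V_{\lambda_{H,I}}^{*}$. Via the Hecke eigen-property of $f_{\s_H}$, the classical value $(\Delta_{\Sht,!}[\Sht_H])_{\s}$ becomes the Petersson norm $\langle f_{\s_H},f_{\s_H}\rangle$ times this vector. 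The cup product with $c_1(\cL_{\det,i})$ is computed by the determinant line bundle's image on the spectral side. That image is the derivative of the Frobenius eigenvalue on the $i$-th leg, i.e.\ the action of the canonical element of $H^1(C,\frhc_{\s_H})$ (the arithmetic direction $\mathrm{Frob}$) contracted with the Casimir on $V_{\lambda_{H,i}}$. This produces the constant $\epsilon_{\lambda_i}$ from \eqref{eq:epsilonformula}.

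\textbf{Step 3: evaluating the pairing.} Both cycles now lie in the fibre of the diagonal, so the dual pairing is a self-intersection of the diagonal. It is therefore the Euler class of the normal complex, which is the tangent complex $C^*(C,\frhc_{\s_H})[1]$, with $r$ slots filled by the determinant insertions. Applying a Lefschetz trace of Frobenius on $\det C^*(C,\frhc_{\s_H})$ turns this into the coefficient of $s^r$ in the characteristic polynomial of Frobenius. That polynomial is $q^{(g-1)\dim\frhc\, s}L(s,\frhc_{\s_H})$, with the factor $q^{(g-1)\dim\frhc\,s}$ coming from the functional-equation normalisation and $\chi(C,\frhc)$. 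We then divide by the Petersson norm. The Ichino--Ikeda type identity for the group case, i.e.\ the residue of the identity-Hecke spectral decomposition, gives $L(0,\frhc_{\s_H})$ as the $r=0$ case; this explains the denominator.

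\textbf{Main obstacle.} The hardest step is Step~2: showing that $c_1(\cL_{\det,i})$ acts spectrally precisely as the $s$-derivative $\frac{1}{\log q}\frac{d}{ds}$, with the constant $\epsilon_{\lambda_i}$ and correct signs and Tate twists. I would first verify this for $r=1$ by explicit computation at a single leg, using the fusion compatibility of \cite{arinkin2022automorphicfunctionstracefrobenius}. For general $r$ I would then argue multiplicatively, using that the legs give commuting derivations.
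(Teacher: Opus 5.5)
Your outline has the right heuristic shape: an exterior algebra on $H^1(C,\frhc_{\s_H})$, a Frobenius trace producing $L$-values, and a Casimir contraction producing $\epsilon_{\lambda}$. As a proof, however, it has a genuine gap, and it starts with a misidentification in Step~1.

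Near a geometrically strongly irreducible $\s$, the stack $\Loc_{\Gc}^{\arith}$ is just $\mathbb{B}Z(\Gc)$. Its tangent complex is the fibre of $1-\Frob$ on $R\Gamma(C_{\overline{\FF_q}},\frgc_{\s})[1]$. This vanishes, because $H^0$ and $H^2$ vanish geometrically and $H^1(C_{\overline{\FF_q}},\frgc_{\s})$ is pure of weight $1$. So there is no exterior-algebra derived fibre there. The $\s$-isotypic part is simply $H^*(C^I,V_{\lambda_I,\s})$ (Proposition~\ref{prop:cohshtuka}), and the two classes being paired live in $H^0(C^I,V_{\lambda_I,\s})^*$ and $H^{2r}(C^I,V_{\lambda_I,c_{\Gc}(\s)})(r)^*$. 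The exterior algebra lives instead in the geometric period integral $\int_X\cF_{\s}=\Gamma_c(\Bun_H,\cF_{\s_H}\otimes\mathbb{D}^{\mathrm{Ver}}(\cF_{\s_H}))$, which is a shift of $\End(\cF_{\s_H})$ (Proposition~\ref{prop:dim}). Your Step~2 description of the plain diagonal class as a Petersson norm times the invariant vector is essentially Propositions~\ref{prop:cycle=trace} and~\ref{prop:diagh0}. Step~3, however (``self-intersection of the diagonal, Euler class of the normal complex with $r$ slots filled''), has no precise object to compute with. What is needed, and what the paper does, is to use Proposition~\ref{prop:cycle=trace} and Corollary~\ref{cor:intnondegenerate} to rewrite the left-hand side as $q^{-(g-1)\dim H}|\pi_1(H)|^{-2}L(1,\frhc_{\s_H})^{-2}$ times a Frobenius-twisted trace on $\End(H^*\int_X\cF_{\s})$ (Lemma~\ref{lem:intnumber=trace}). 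The operator traced is the intersection observable $\Gamma_{\lambda_I,\s}$, built from left multiplication by Lefschetz observables and right multiplication by diagonal observables.

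The step you flag as the main obstacle is where the missing idea sits, and a single-leg local computation plus fusion cannot supply it. There is also no ``canonical element of $H^1(C,\frhc_{\s_H})$''. Locally, on the spectral side of derived Satake, $c_1(\cL_{\det})$ is the Casimir-type operator $a\otimes v\mapsto\sum_i aX_i\otimes X^iv-\frac{\hbar}{2}\sum_i a\otimes X_iX^iv$. Contracting with the Satake cycle gives $\pm\epsilon_\lambda\,\fra\frd^{\loc}$ (Proposition~\ref{prop:localunit}) and a scalar $\pm\frac{\dim\frhc}{2}\epsilon_\lambda\hbar$ (Proposition~\ref{prop:scalarpartloc}).

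Turning this into a derivative requires a global input: the commutator relation between adjoint and diagonal observables (Theorem~\ref{thm:clifford}). It gives the Atiyah--Bott description $H^*\int_{X,e}\cF_{\s}\cong\wedge^{\bullet}H^1(C,\frhc^*_{\s_H})(1)$ (Corollary~\ref{cor:AB}) and makes the observables a Clifford algebra with the Ran filtration. Only on the associated graded does $\Gamma_{\lambda,\s}$ become ``derivation plus scalar''. Up to sign, it is $\epsilon_\lambda$ times the number operator on the $H^1(C,\frhc_{\s_H})$-generators, plus the scalar $(2g-2)\frac{\dim\frhc}{2}\epsilon_\lambda$ (Proposition~\ref{prop:gammagraded}).

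The Frobenius supertrace of that number operator is what produces $\frac{1}{\log q}\frac{d}{ds}$. The scalar, which comes from $\hbar\mapsto c_1(T_C)$, is what produces $q^{(g-1)\dim\frhc\,s}$; it is not a functional-equation normalisation. Its proportionality to $\epsilon_\lambda$ is exactly what lets $\epsilon_{\lambda_I}$ factor out. Your plan contains none of these global ingredients, so it determines neither the derivative nor the exponential factor.
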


\begin{remark}
The constant $\epsilon_{\lambda}$ is the Dynkin index\footnote{More precisely, it is the usual Dynkin index multiplied by the lacing number of $H$.} of the highest weight representation
$V_{\lambda_{H}}\in \Rep(\Hc)$. More precisely,
\begin{equation}\label{eq:epsilonformula}
    \epsilon_{\lambda}
    =
    \frac{\tr(XY, V_{\lambda_{H}})}{\kappa_{\min}(X,Y)}
    =
    \frac{\kappa_{\min}(\lambda_H,\lambda_H+2\rho_H)\dim V_{\lambda_{H}}}{\dim \frhc},
    \qquad X,Y\in \frhc,
\end{equation}
where $\kappa_{\min}$ is the minimal invariant bilinear form on $\frhc$,
normalized as in \eqref{eq:kappamin}, and $\rho_H$ denotes the half-sum of positive roots of $\frhc$. These constants can be viewed as simplified analogues of the eigenweights appearing in
\cite{FYZvolume,wang2026arithmeticvolumeshtukaslanglands,feng2026eigenweightsarithmetichirzebruchproportionality}.
\end{remark}

\begin{remark}
When $G=\GL_n$, every irreducible $\GL_n$-Weil local system is known to be automorphic by \cite{lafforgue2002chtoucas}. It is expected that the same holds for arbitrary reductive groups. If $\s_H$ is non-automorphic, then the left-hand side of the identity vanishes.
\end{remark}

\begin{remark}
    A generalization of Theorem \ref{thm:main} to reductive groups should be straightforward. We restrict to the almost simple case in order to have a canonical choice of determinant line bundles and to keep the formula simpler.
\end{remark}

\subsubsection{Expectation for Shimura varieties}\label{sec:shimura}

We expect that an analogue of Theorem~\ref{thm:main} with $r=1$ should hold for Shimura varieties. In particular, this predicts that the constant $\epsilon_{\mu}$ associated with the Hodge cocharacter $\mu$ of the Shimura variety appears as a multiplicity factor, a phenomenon that has not previously been observed.

For $H=\mathrm{U}(n-1,1)$ (for which $\epsilon_{\mu}=1$) and $H=\mathrm{SO}(n-2,2)$ (for which $\epsilon_{\mu}=2$), such a conjecture was formulated in \cite{CLZ}, and proved in \textit{loc.\ cit.} in the case $H=\mathrm{U}(1,1)$ under additional assumptions. Theorem~\ref{thm:main} suggests a broad generalization of this conjectural picture to arbitrary Shimura varieties, with the principal new feature being the more subtle behavior of the constant $\epsilon_{\mu}$.

We list some examples of the constants $\epsilon_{\lambda}$ for split simple groups and minuscule coweights $\lambda_H\in X_*(T_H)_+$ in Table \ref{tab:epsilon}.

\begin{table}[htbp]
    \centering
    \caption{Values of $\epsilon_{\lambda}$ for split simple groups $H$ and minuscule coweights $\lambda_H\in X_*(T_H)_+$}
    \label{tab:epsilon}
    \begin{tabular}{c c}
        \hline
        $(H,\lambda_H)$ & $\epsilon_{\lambda}$ \\
        \hline

        $(A_n,(1^i,0^{n+1-i}))$
            & $\displaystyle \binom{n-1}{i-1}$ \\

        $(B_n,~\textup{standard})$ & $2$ \\
        $(D_n,~\textup{standard})$ & $2$ \\
        $(C_n,~\textup{spin})$ & $2^{n-1}$ \\

        $(D_n,~\textup{half-spin})$ & $2^{n-3}$ \\

        $(E_6,~\textup{standard})$ & $6$ \\ 
        $(E_7,~\textup{standard})$ & $12$ \\ 
        \hline
    \end{tabular}
\end{table}

\subsection{Strategy}

We now explain the strategy of the proofs of Theorems \ref{thm:main} and \ref{thm:intnondegintro}. Although our statement is motivated by the work of \cite{CLZ}, our proof takes a fundamentally different approach from their relative trace formula method, which originates from \cite{zhang2012arithmetic}. The key ingredients of our argument are the interpretation of special cycle classes as categorical traces developed in \cite{wang2025specialcycleshtukascategorical}, together with the geometric Langlands theory in positive characteristic developed in \cite{gaitsgory2025geometriclanglandspositivecharacteristic}. These results allow us to interpret the intersection pairing as a Frobenius-twisted trace of a certain operator, which we call the \emph{intersection observable}. This construction is carried out in \S\ref{sec:diagcycle}.

The intersection observable is roughly an endomorphism of the geometric period integral. To compute it, we prove an analogue of the Atiyah--Bott formula for geometric period integrals: namely, we show that the geometric period integral is a free (odd) polynomial ring generated by tautological classes (Corollary \ref{cor:AB}), analogous to the Atiyah--Bott description of $H^*(\Bun_G)$ as a polynomial ring generated by tautological classes. Along the way, we construct an action of a Clifford algebra on the geometric period integral (Theorem \ref{thm:clifford}), which plays a key role both in the proof of the Atiyah--Bott type formula and in the computation of the intersection observable. This is the content of \S\ref{sec:geometricperiod}.

To establish the Clifford algebra action, we reduce the problem to a local computation in the Plancherel algebra using the formalism of cohomological correspondences. This is the content of \S\ref{sec:specialcohcor}.

The Clifford algebra action (Theorem \ref{thm:clifford}) also appeared in the author's previous works \cite{liu2025higherperiodintegralsderivatives,wang2025specialcycleshtukascategorical}. Compared with the strongly tempered setting treated in \textit{loc.\ cit.}, the Clifford algebra action in the present tempered setting is more intricate.

\subsection{Notations}\label{sec:notation}

We work by default with $(\infty,1)$-categories enriched over $\Vect$, the category of super vector spaces. For a category $\mathcal{C}$ and objects $x,y \in \mathcal{C}$, we denote by $\Hom(x,y)$ the mapping space from $x$ to $y$, and write $\Hom^0(x,y):=H^0(\Hom(x,y))\in \Vect^{\heartsuit}$.

Set $k=\Qlbar$. For a prestack $X$, let $\Shv(X)$ denote the category of (ind-)constructible \'etale sheaves on $X$ with coefficients in $k$. We refer to \cite[\S4.1]{liu2025higherperiodintegralsderivatives} for our conventions on this category. We write $\uk_X\in \Shv(X)$ for the constant sheaf on $X$. For $\cF\in \Shv(X)$, define
\[
H^*_{(c)}(X,\cF):=\prod_{i\in \ZZ} H^i_{(c)}(X,\cF),
\]
where $H^i_{(c)}(X,\cF):=H^i(\Gamma_{(c)}(X,\cF))$ denotes the $i$-th cohomology group (with compact support if indicated), regarded as a super vector space of parity $i\bmod 2$. We use $\langle n\rangle=\Pi^n[n](n/2)$ to denote the cohomological shift by $n$, Tate twist by $n/2$, and parity shift by $n$. We use $ \mathbb{D}^{\mathrm{Ver}}:\Shv(X)\to \Shv(X)^{\op}$ to denote the Verdier duality functor.

Let $C$ be a smooth geometrically connected projective curve over $\FF_q$. Let $D=\Spec \FF_q[\![t]\!]$ be the formal disc, and let $\Aut(D)$ denote the group of origin-preserving automorphisms of $D$. We have an isomorphism $H^*(\mathbb{B}\Aut(D))\cong k[\![\hbar]\!]$ where $\hbar=c_1(T_D)$ is the first Chern class of the tangent bundle of $D$.

For a split reductive group $G$, let $L^+G$ and $LG$ denote the jet group and loop group of $G$, respectively. We write $\Gr_G:=LG/L^+G$ for the affine Grassmannian. Define the local Hecke stack by
\[
\Hk_G^{\loc}:=(L^+G\bs \Gr_G)/\Aut(D).
\]
Its global counterpart is the Hecke stack $\Hk_G$, equipped with morphisms $\lh,\rh:\Hk_G\to \Bun_G\times C$ whose fibers are isomorphic to $\Gr_G$.

For $I=\{1,\dots,r\}$, let $\Hk_{G,I}$ denote the iterated Hecke stack with $r$ legs. It is equipped with a morphism $l_I:\Hk_{G,I}\to C^I$ recording the legs, together with morphisms $\overline{\lh}_I,\overline{\rh}_I:\Hk_{G,I}\to \Bun_G$ recording the leftmost and rightmost $G$-bundles. The moduli stack of $G$-Shtukas with $r$ legs is defined by the Cartesian diagram
\[
\begin{tikzcd}
    \Sht_{G,I} \ar[r, "f_{\Sht,I}"] \ar[d]
    & \Hk_{G,I} \ar[d, "{(\overline{\lh}_I,\Frob\circ \overline{\rh}_I)}"] \\
    \Bun_G \ar[r, "\Delta_{\Bun_G}"]
    & \Bun_G\times\Bun_G.
\end{tikzcd}
\]

For each dominant coweight $\lambda\in X_*(T)_+$, let $\Gr_{G,\leq\lambda}\subset \Gr_G$, $\Hk_{G,\leq \lambda}^{\loc}\subset \Hk_G^{\loc}$, and $\Hk_{G,\leq\lambda}\subset \Hk_G$ denote the corresponding closed Schubert substacks. Define $d_{\lambda}:=\dim \Gr_{G,\leq\lambda}$. For a sequence of dominant coweights $\lambda_I=(\lambda_1,\dots,\lambda_r)\in X_*(T)_+^I$, let $\Hk_{G,\leq\lambda_I}\subset \Hk_{G,I}$ denote the corresponding closed Schubert substack, and define $d_{\lambda_I}:=\sum_{i=1}^r d_{\lambda_i}$.

We write $\IC_{\lambda}$ (resp.\ $\IC_{\lambda_I}$) for the intersection complex on these Schubert substacks, normalized to be perverse and pure of weight zero along each fiber isomorphic to a Schubert variety, with parity given by $d_{\lambda}$ (resp.\ $d_{\lambda_I}$). We write $V_{\lambda}\in \Rep(\Gc)$ (resp.\ $V_{\lambda_I}\in \Rep(\Gc^I)$) for the irreducible representation of highest weight $\lambda$ (resp.\ $\lambda_I$). For $V\in \Rep(\Gc)$ (resp.\ $V^I\in \Rep(\Gc^I)$), we write $\Sat(V)$ (resp.\ $\Sat(V^I)$) for the corresponding sheaf on the relevant Hecke stack under the geometric Satake equivalence. For a $\Gc$-local system $\s$ on $C$, we write $V_{\lambda,\s}$ (resp.\ $V_{\lambda_I,\s}$) for the local system on $C$ (resp. $C^I$) obtained by applying $V_{\lambda}$ (resp.\ $V_{\lambda_I}$) to $\s$. We use $w_0$ to denote the longest element in the Weyl group.

When $G$ is almost simple, let $\cL_{\det}\in \Pic(\Hk_G^{\loc})_{\QQ}$ denote the determinant line bundle on the local Hecke stack, defined to be \emph{inverse} to the convention in \cite[\S4.1.1]{wang2026arithmeticvolumeshtukaslanglands}, namely, its pull-back to each connected component of $\Gr_G=LG/L^+G$ is a primitive ample generator of the Picard group. By abuse of notation, we use the same symbol $\cL_{\det}$ for its pull-back to $\Hk_G$. For each $i\in I$, let $\cL_{\det,i}\in \Pic(\Hk_{G,I})_{\QQ}$ denote the pull-back of $\cL_{\det}$ under the morphism $\Hk_{G,I}\to \Hk_G$ recording the modification at the $i$-th leg. We use the same notation for its further pull-back to $\Sht_{G,I}$.

\subsection*{Acknowledgment}
The author would like to thank his advisor, Zhiwei Yun, for constant encouragement and many invaluable discussions. He is also grateful to Ryan Chen, Weixiao Lu, Wei Zhang, and Daming Zhou for helpful discussions.

\subsection*{AI usage}
ChatGPT 5.5 and 6 Pro were used to review the article and correct typographical and grammatical errors. They also helped identify several computational errors in an early version of the article, which were subsequently corrected by the author. No other AI tools were used in the writing process. The author takes full responsibility for the content of this article.

\section{Special cohomological correspondences}\label{sec:specialcohcor}
In this section, we study special cohomological correspondences acting on the period sheaf introduced in \cite[\S4]{liu2025higherperiodintegralsderivatives}.
\subsection{The Plancherel algebra}
We begin by recalling the definition of the Plancherel algebra in the group case, following \cite[\S8]{BZSV}.
\subsubsection{Derived Satake equivalence}
Let $G$ be a split reductive group. Consider the (derived) Satake category \[(\Sat_G,*, 1_{\Sat_G}):=(\Shv(L^+G\bs LG/L^+G),*, \delta_G)\] and its variant with loop rotation \[(\Sat_{G,\hbar},*, 1_{\Sat_{G,\hbar}}):=(\Shv(L^+G\rtimes\Aut(D)\bs LG\rtimes\Aut(D)/L^+G\rtimes\Aut(D)),*,\delta_{G,\hbar})\] as monoidal categories with the convolution monoidal operation $*$. The geometric Satake equivalence proved in \cite{MV} gives canonical isomorphisms $\Sat_G^{\heartsuit}\cong \Sat_{G,\hbar}^{\heartsuit}\cong \Rep(\Gc)^{\heartsuit}$.

Consider also the renormalized version \[\Sat_G^{\ren}:=\Shv(L^+G\bs LG/L^+G)^{\ren}\] and \[\Sat_{G,\hbar}^{\ren}:=\Shv(L^+G\rtimes\Aut(D)\bs LG\rtimes\Aut(D)/L^+G\rtimes\Aut(D))^{\ren}\] where the renormalization makes constructible sheaves compact objects. One has $\Sat_G^{>-\infty}\cong \Sat_G^{\ren,>-\infty}$ and $\Sat_{G,\hbar}^{>-\infty}\cong \Sat_{G,\hbar}^{\ren,>-\infty}$ on the eventually coconnective subcategories.

The following theorem is referred to as the \emph{derived Satake equivalence}:
\begin{thm}[{\cite[Theorem\,5]{BF}}]\label{thm:derivedsatake}
    There are natural equivalences of monoidal categories \[\Sat:\QCoh((\frgc^*)^{\shear}/\Gc)\isom\Sat_G^{\ren} \] \[ \Sat:\Mod_{U_{\hbar}(\frgc^{\shear})}^{\Gc}\isom\Sat_{G,\hbar}^{\ren}.\] Here, the shearing $(\frgc^*)^{\shear}$ places $\frgc^*$ in cohomological degree $-2$, the shearing $\frgc^{\shear}$  places $\frgc$ in cohomological degree $2$, and $U_{\hbar}(\frgc^{\shear})$ is the graded enveloping algebra of $\frgc$ as considered in \cite[\S2.2]{BF}.
\end{thm}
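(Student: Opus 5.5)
The plan is to follow the strategy of Bezrukavnikov--Finkelberg \cite{BF}: build a functor on a set of compact generators, prove it is fully faithful on them, and then extend. On the spectral side, $\QCoh((\frgc^*)^{\shear}/\Gc)$ is compactly generated by the free modules $\cO((\frgc^*)^{\shear})\otimes V_\lambda$ for $\lambda\in X_*(T)_+$. On the automorphic side, $\Sat_G^{\ren}$ is by definition the ind-completion of the constructible sheaves. These are generated under finite colimits and shifts by the intersection complexes $\IC_\lambda$, because every $L^+G$-orbit is simply connected and equivariant, and one can d\'evissage along the stratification.

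The functor is defined on generators by
\[
\cO\otimes V_\lambda\mapsto \IC_\lambda=\Sat(V_\lambda),
\]
and the main task is to match Hom spaces. One needs a graded identification, compatible with composition,
\[
\Hom^*_{\Sat_G}(\IC_\lambda,\IC_\mu)\;\cong\;\bigl(\Sym(\frgc[-2])\otimes V_\lambda^*\otimes V_\mu\bigr)^{\Gc}.
\]
Using rigidity of the abelian Satake category, this reduces to computing $\Hom^*(\delta_G,\IC_\nu)=H^*_{L^+G}(\Gr_G,\IC_\nu)$ as a module over $H^*_{G}(\mathrm{pt})=\cO(\frtc^{\shear}/W)$.

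I would carry out this computation in four steps.
\begin{enumerate}
\item \emph{Parity vanishing and purity} of the stalks of $\IC_\nu$ show that $H^*_{L^+G}(\Gr_G,\IC_\nu)$ is free over $H^*_G(\mathrm{pt})$. The same weight argument forces the dg algebra $\bigoplus_{\lambda,\mu}\Hom^*(\IC_\lambda,\IC_\mu)$ to be formal, since Frobenius weights coincide with cohomological degrees. This is exactly where the étale, weight-zero normalization is used.
\item \emph{Localization}: restrict to $T$-equivariant cohomology at a generic point of $\frtc$, where the fixed points of $\Gr_G$ are the $\Gr^\nu$-lattices. This identifies the module after inverting the discriminant with $V_\nu\otimes\cO(\frtc)$, equivariantly for $W$.
\item \emph{Ginzburg's description}: identify $H^*_{L^+G}(\Gr_G)$ with functions on the group scheme of regular centralizers over the Kostant slice, and $H^*_{L^+G}(\IC_\nu)$ with $V_\nu$ restricted to that slice.
\item \emph{Kostant's theorem}: $\cO(\frgc^*)\otimes V$ is free over $\cO(\frgc^*)^{\Gc}$, and its $\Gc$-invariants are determined by restriction to the regular locus, which has complement of codimension $\geq 2$. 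This upgrades the generic identification to the integral one.
\end{enumerate}

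Compatibility with composition and with the monoidal structure follows from fusion, i.e.\ the Beilinson--Drinfeld Grassmannian over $C^2$ together with the commutativity constraint. Full faithfulness on compact generators then gives an equivalence of the ind-completions.

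For the loop-rotation version, the same argument is run $\Aut(D)$-equivariantly, so that $\hbar$ appears as $c_1(T_D)$. Ginzburg's regular centralizer is replaced by the asymptotic (Rees) version of the Harish-Chandra isomorphism. The equivariant cohomology $H^*_{L^+G\rtimes\Aut(D)}(\Gr_G,\IC_\nu)$ becomes a module over $U_\hbar(\frgc^{\shear})$, and Kostant's freeness is replaced by its filtered analogue for $U_\hbar$ with associated graded $\Sym(\frgc)$.

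I expect the main obstacle to be the Ext computation in step~3, together with the formality in step~1. Matching the algebra structures, and not merely the graded vector spaces, requires the regular centralizer description of $H^*_{L^+G}(\Gr_G)$, which is the technical heart of \cite{BF}. For this I would invoke Yun--Zhu's integral version to justify the argument in the étale, $\Qlbar$ setting at good characteristic $p$.
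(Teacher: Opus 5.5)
The paper gives no argument of its own here; it quotes \cite[Theorem\,5]{BF}. Your sketch therefore has to stand as a reconstruction of Bezrukavnikov--Finkelberg's proof. The ingredients you list are the right ones: purity and formality, localization, Ginzburg's computation, Kostant--Whittaker reduction, and the codimension of the irregular locus. However, the step that is supposed to tie them to the Hom spaces is false.

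After correctly reducing to $\Hom^*(\delta_G,\IC_\nu)$, you set $\Hom^*(\delta_G,\IC_\nu)=H^*_{L^+G}(\Gr_G,\IC_\nu)$. The two sides do not agree:
\begin{itemize}
\item The left side is the $L^+G$-equivariant $!$-costalk of $\IC_\nu$ at the base point. It vanishes whenever the base point is not in $\Gr_{G,\leq\nu}$ (e.g.\ $\nu$ outside the coroot lattice). In general it is free over $H^*_G(\mathrm{pt})$ of rank $\dim V_\nu(0)$, the dimension of the zero weight space.
\item The right side is $H^*_G(\mathrm{pt})\otimes V_\nu$, of rank $\dim V_\nu$.
\end{itemize}
Your steps 2--4, and the freeness claim in step 1, are statements about the global equivariant cohomology functor $H^*_{L^+G}(\Gr_G,-)$. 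They do not compute the object you reduced to. Nothing in the outline turns them into a description of $\Hom^*(\IC_\lambda,\IC_\mu)$, let alone one compatible with composition.

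The missing idea is the full faithfulness of equivariant cohomology on semisimple complexes. This is due to Ginzburg and is a key lemma in \cite{BF}:
\[
\Hom^*(\IC_\lambda,\IC_\mu)\cong\Hom_{H^*_{L^+G}(\Gr_G)}\bigl(H^*_{L^+G}(\Gr_G,\IC_\lambda),H^*_{L^+G}(\Gr_G,\IC_\mu)\bigr).
\]
This is where parity/purity and localization to $T$-fixed points are really used. With it the argument closes:
\begin{itemize}
\item Identify $V\mapsto H^*_{L^+G}(\Gr_G,\Sat(V))$ with the Kostant--Whittaker reduction of $\cO\otimes V$, compatibly with the $H^*_{L^+G}(\Gr_G)$-action and with tensor products.
\item Use that Kostant--Whittaker reduction is fully faithful on free $\Gc$-equivariant modules (your step 4).
\end{itemize}
Both sides are then Hom spaces in module categories, so compatibility with composition is automatic. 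It is not a consequence of fusion; fusion is needed only for the monoidal structure.

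Smaller points:
\begin{itemize}
\item Functions on the regular centralizer group scheme are the equivariant \emph{homology} $H_*^{L^+G}(\Gr_G)$, not the cohomology.
\item In the loop-rotation case, $H^*_{L^+G\rtimes\Aut(D)}(\Gr_G,\IC_\nu)$ is not a $U_\hbar(\frgc^{\shear})$-module. It is the quantum Kostant--Whittaker reduction of the bimodule $U_\hbar(\frgc^{\shear})\otimes V_\nu$ of Example~\ref{eg:central}.
\item Yun--Zhu concerns integral coefficients over $\mathbb{C}$, not the passage to the \'etale setting.
\end{itemize}
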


\begin{example}\label{eg:central}
    For $V\in \Rep(\Gc)$, the corresponding object in $\Mod_{U_{\hbar}(\frgc^{\shear})}^{\Gc}$ is the $U_{\hbar}(\frgc^{\shear})$-bimodule $U_{\hbar}(\frgc^{\shear})\otimes V$ where the left action of $U_{\hbar}(\frgc^{\shear})$ is given by left multiplication on the first factor and the right action is given by $(Y\otimes v)\cdot X = YX\otimes v - \hbar Y\otimes Xv$ for $v\in V$, $Y\in U_{\hbar}(\frgc^{\shear})$, and $X\in \frgc^{\shear}$.\footnote{For the minus sign in the right action, note that it is forced by the requirement that this action define a right \(U_{\hbar}(\frgc^{\shear})\)-module structure on \(U_{\hbar}(\frgc^{\shear})\otimes V\).}
\end{example}

\subsubsection{Plancherel algebra}
From now on, take $G=H\times H$. Let $X=H$, regarded as a \emph{left} $G$-variety via the action
\[
    (h_1,h_2)\cdot h=h_1hh_2^{-1},
    \qquad (h_1,h_2)\in G,
    \quad h\in X.
\] This is called the \emph{group case} in \cite{BZSV}.
Define
\[
    \Sat_X=\Shv(L^+G\bs LX)\cong \Sat_H,
    \qquad
    \Sat_{X,\hbar}=\Shv((L^+G\rtimes\Aut(D))\bs LX)\cong \Sat_{H,\hbar}.
\]
We regard these as \emph{left} $\Sat_G$ (resp. $\Sat_{G,\hbar}$) module categories, or equivalently as $\Sat_H$ (resp. $\Sat_{H,\hbar}$)-bimodules. Let $\delta_X\in \Shv(L^+G\bs LX)$ (resp. $\delta_{X,\hbar}\in \Shv((L^+G\rtimes\Aut(D))\bs LX)$) be the basic object, namely the constant sheaf supported on $L^+G\bs L^+X$ (resp. on $(L^+G\rtimes\Aut(D))\bs L^+X$).

Define the \emph{Plancherel algebra} to be the algebra object \begin{equation}
    \PL_X:=\underline{\End}_{\Rep(\Gc)}(\delta_X)\in \Alg(\Rep(\Gc)).
\end{equation} It also has a noncommutative version
\begin{equation}
    \PL_{X,\hbar}:=\underline{\End}_{\Rep(\Gc)}(\delta_{X,\hbar})\in \Alg(\Rep(\Gc)).
\end{equation}

\subsection{Local special cohomological correspondences}
Now we study local special cohomological correspondences in the group case.
\subsubsection{Definition}
A \emph{local special cohomological correspondence} of degree $d$ is an element \[\frc_{V}^{\loc} \in \Hom^0_{\Rep(\Gc)}(V,\PL_{X,\hbar}\langle d\rangle)\cong \Hom^0_{\Sat_{X,\hbar}}(\Sat(V)*\delta_{X,\hbar},\delta_{X,\hbar}\langle d\rangle )\] for some $V\in \Rep(\Gc)$ and $d\in\ZZ$.

Given local special cohomological correspondences $\frc_{V}^{\loc}$ and $\frc_{W}^{\loc}$ of degree $d_V$ and $d_W$ respectively, one can define their composition $\frc_{V}^{\loc}\circ \frc_{W}^{\loc}\in \Hom^0_{\Rep(\Gc)}(V\otimes W,\PL_{X,\hbar}\langle d_V+d_W\rangle)$ as cohomological correspondences which is also realized by the composition \[V\otimes W\xrightarrow{\frc_V^{\loc}\otimes \frc_W^{\loc}} \PL_{X,\hbar}\langle d_V\rangle \otimes \PL_{X,\hbar}\langle d_W\rangle\xrightarrow{\rmm} \PL_{X,\hbar}\langle d_V+d_W\rangle\] where $\rmm$ is the multiplication map on $\PL_{X,\hbar}$.

Given a local special cohomological correspondence $\frc_W^{\loc}$ of degree $d$ and a morphism $f:V\to W$ in $\Rep(\Gc)$, one can define the pullback \begin{equation}\label{eq:pullbackloccor} f^*\frc_W^{\loc}\in \Hom^0_{\Rep(\Gc)}(V,\PL_{X,\hbar}\langle d\rangle)\end{equation} by the composition \[V\xrightarrow{f} W\xrightarrow{\frc_W^{\loc}} \PL_{X,\hbar}\langle d\rangle.\]

\subsubsection{Examples of local special cohomological correspondences}

For $\lambda_H\in X_*(T_H)_+$, we use $V_{\lambda_H}\in \Rep(\Hc)$ to denote the unique irreducible representation with highest weight $\lambda_H$. Write $\lambda=(\lambda_H,\lambda_H)\in X_*(T)_+$.

Define the \emph{local diagonal cohomological correspondence} (of modification type $\lambda$ and degree $0$) to be the element \begin{equation}\label{eq:diagcorloc}
    \frd_{\lambda}^{\loc}\in \Hom^0_{\Rep(\Gc)}(V_{\lambda},\PL_{X,\hbar})\cong \Hom^0_{\Sat_{H,\hbar}}(\IC_{\lambda_H}*\IC_{-w_0\lambda_H},\delta_{H,\hbar})
\end{equation}
given by the fundamental class of the Satake cycle.

Define the \emph{local Lefschetz cohomological correspondence} (of modification type $\lambda$ and degree $2$) to be the element 
\begin{equation}\label{eq:lefcorloc}
    \frd_{\det,\lambda}^{\loc}\in \Hom^0_{\Rep(\Gc)}(V_{\lambda},\PL_{X,\hbar}\langle 2\rangle) \cong \Hom^0_{\Sat_{H,\hbar}}(\IC_{\lambda_H}*\IC_{-w_0\lambda_H},\delta_{H,\hbar}\langle 2\rangle)
\end{equation} corresponding to the map \[\IC_{\lambda_H}*\IC_{-w_0\lambda_H}\xrightarrow{c_1(\cL_{\det})*\id} \IC_{\lambda_H}*\IC_{-w_0\lambda_H}\langle 2\rangle \xrightarrow{\frd_{\lambda}^{\loc}} \delta_{H,\hbar}\langle 2\rangle.\]

Define the \emph{local adjoint cohomological correspondence} (of degree 2) to be the element \begin{equation}
    \fra\frd^{\loc} 
    \in \Hom^0_{\Rep(\Gc)}(\frhc\boxtimes k, \PL_{X,\hbar}\langle 2\rangle)
    \cong \Hom^0_{\Mod_{U_{\hbar}(\frhc^{\shear})}^{\Hc}}( U_{\hbar}(\frhc^{\shear})\otimes \frhc,U_{\hbar}(\frhc^{\shear})\langle 2\rangle)\cong \Hom^0_{\Rep(\Hc)}(\frhc, U_{\hbar}(\frhc^{\shear})\langle 2\rangle) \end{equation} corresponding to the tautological map.

\subsubsection{A local commutator relation}

Consider the map \begin{equation}\act_{V_{\lambda_H}}\in \Hom^0_{\Rep(\Hc)}(\frhc\otimes V_{\lambda_H} ,V_{\lambda_H})\end{equation} given by the Lie algebra action. It induces a natural map \begin{equation}
    \act_{V_{\lambda_H}}\boxtimes \id \in \Hom^0_{\Rep(\Gc)}((\frhc\boxtimes k)\otimes V_{\lambda},V_{\lambda})
\end{equation} in the obvious way.

\begin{prop}\label{prop:localcommutator}
    The following identity holds \[[\fra\frd^{\loc},\frd_{\lambda}^{\loc}]= \hbar (\act_{V_{\lambda_H}}\boxtimes \id)^*\frd_{\lambda}^{\loc}\in\Hom^0((\frhc\boxtimes k)\otimes V_{\lambda},\PL_{X,\hbar}\langle 2\rangle).\]
\end{prop}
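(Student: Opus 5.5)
The plan is to move the whole identity to the spectral side with the derived Satake equivalence (Theorem \ref{thm:derivedsatake}) for $H$, and then check it there by a direct computation with bimodules over $U_{\hbar}(\frhc^{\shear})$.

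\textbf{Step 1: describe $\PL_{X,\hbar}$ and the two correspondences spectrally.} Under $\Sat_{X,\hbar}\cong\Sat_{H,\hbar}\cong\Mod^{\Hc}_{U_{\hbar}(\frhc^{\shear})}$, the basic object $\delta_{X,\hbar}$ corresponds to the diagonal bimodule $U:=U_{\hbar}(\frhc^{\shear})$. For $V_1\boxtimes V_2\in\Rep(\Gc)$, the action on $\delta_{X,\hbar}$ is $\Sat(V_1)*\delta_{X,\hbar}*\Sat(V_2^{\vee})$. This corresponds to the bimodule $U\otimes V_1\otimes V_2^{\vee}$, with right action twisted as in Example \ref{eg:central}. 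Since $U\otimes W$ is free as a left $U$-module, a bimodule map $U\otimes W\to U\langle d\rangle$ is determined by the $\Hc$-equivariant map $W\to U\langle d\rangle$ giving the images of $1\otimes w$. This identifies
\[
\Hom^0(V_1\boxtimes V_2,\PL_{X,\hbar}\langle d\rangle)\cong\Hom^0_{\Rep(\Hc)}(V_1\otimes V_2^{\vee},U\langle d\rangle),
\]
with the condition that the map is compatible with the right action. In these terms:
\begin{itemize}
\item $\fra\frd^{\loc}$ is the inclusion $\frhc\hookrightarrow U\langle 2\rangle$ of the generators.
\item $\frd^{\loc}_{\lambda}$ is the evaluation pairing $V_{\lambda_H}\otimes V_{\lambda_H}^{\vee}\to k\cdot 1\subset U$.
\end{itemize}
For $\frd^{\loc}_{\lambda}$ I would argue as follows. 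The fundamental class of the Satake cycle is the unit/counit of the rigid duality between $\IC_{\lambda_H}$ and $\IC_{-w_0\lambda_H}$ in $\Sat_{H,\hbar}^{\heartsuit}$, and derived Satake is monoidal and restricts to the abelian Satake equivalence, so this class goes to evaluation. This holds up to a normalization scalar, which is harmless because both sides of the identity are linear in $\frd^{\loc}_\lambda$.

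\textbf{Step 2: multiplication in $\PL_{X,\hbar}$ as composition of bimodule maps.} The product of $\frc_V^{\loc}$ and $\frc_W^{\loc}$ is the composite
\[
\Sat(V)*\Sat(W)*\delta\to\Sat(V)*\delta\langle d_W\rangle\to\delta\langle d_V+d_W\rangle.
\]
On the spectral side this is: apply $\frc_W$ on the inner tensor factor, then $\frc_V$, using left $U$-linearity. To compute the image of $1\otimes X\otimes v\otimes\xi$ under each composite, one has to move generators across the factor $V_{\lambda_H}$ or $\frhc$. This is done with the twisted right action $(Y\otimes v)\cdot X=YX\otimes v-\hbar\,Y\otimes Xv$ from Example \ref{eg:central}, applied on the first copy of $\Hc$. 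The $\Hc$ acting through $\frhc\boxtimes k$ is the one whose module structure meets $V_{\lambda_H}$.

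\textbf{Step 3: compare the two orders.}
\begin{itemize}
\item In one order, $\fra\frd^{\loc}$ acts after the pairing: it multiplies the scalar $\xi(v)$ by $X$, giving $X\,\xi(v)$.
\item In the other order, $X$ must be pulled past $v$ before the pairing applies. This gives $X\,\xi(v)$ plus a correction $\mp\hbar\,\xi(Xv)$.
\end{itemize}
The difference is therefore $\hbar\,\xi(Xv)$, which is exactly $(\act_{V_{\lambda_H}}\boxtimes\id)^*\frd^{\loc}_{\lambda}$ evaluated on $X\otimes v\otimes\xi$, multiplied by $\hbar$. The Koszul sign in the graded commutator is trivial here, since $\frd^{\loc}_\lambda$ has degree $0$.

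\textbf{Main obstacle.} The hard part is the bookkeeping in Steps 1 and 2, where all the conventions have to be fixed consistently:
\begin{itemize}
\item which copy of $\Hc$ acts on the left and which on the right in $\Sat_X$, given the action $(h_1,h_2)\cdot h=h_1hh_2^{-1}$;
\item that the algebra structure on $\PL_{X,\hbar}$ really corresponds to composition in the order used in the definition of $\frc_V^{\loc}\circ\frc_W^{\loc}$;
\item the sign in the twisted right action.
\end{itemize}
Errors in these produce sign flips or an extra factor. To pin them down, I would first check the identity for $\lambda_H=0$, where both sides vanish, and then for the adjoint representation, where $[\fra\frd^{\loc},\fra\frd^{\loc}]$ must reproduce the Lie bracket scaled by $\hbar$ in $U_{\hbar}$. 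That second case fixes the sign convention.
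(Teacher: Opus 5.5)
Your proposal is correct and is essentially the paper's own proof. Like the paper, you transport to $\Mod^{\Hc}_{U_{\hbar}(\frhc^{\shear})}$ by derived Satake and evaluate both composites on $1\otimes X\otimes v\otimes v^*$. Only the order $\frd_{\lambda}^{\loc}\circ\fra\frd^{\loc}$ requires moving $X$ past $v$, via the twisted right action of Example \ref{eg:central}. With that convention the correction is exactly $-\hbar\,\counit_{V_{\lambda_H}}(Xv\otimes v^*)$, so your $\mp$ resolves to the sign that gives $+\hbar\,(\act_{V_{\lambda_H}}\boxtimes\id)^*\frd_{\lambda}^{\loc}$.
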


\begin{proof}
    Consider the diagram \begin{equation}\label{diag:localcommutatordiag1} \adjustbox{max width=\linewidth}{%
         \begin{tikzcd} U_{\hbar}(\frhc^{\shear})\otimes \frhc\otimes V_{\lambda_H}\otimes V_{-w_0\lambda_H} \ar[r, "\sim"] \ar[d, "\sim"] & U_{\hbar}(\frhc^{\shear})\otimes \frhc\otimes_{U_{\hbar}(\frhc^{\shear})}U_{\hbar}(\frhc^{\shear})\otimes V_{\lambda_H} \otimes_{U_{\hbar}(\frhc^{\shear})}U_{\hbar}(\frhc^{\shear})\otimes V_{-w_0\lambda_H} \ar[d, "\id\otimes \frd_{\lambda}^{\loc}"] \\ U_{\hbar}(\frhc^{\shear})\otimes V_{\lambda_H}\otimes_{U_{\hbar}(\frhc^{\shear})} U_{\hbar}(\frhc^{\shear})\otimes \frhc \otimes_{U_{\hbar}(\frhc^{\shear})}U_{\hbar}(\frhc^{\shear})\otimes V_{-w_0\lambda_H} \ar[d, "\id\otimes \fra\frd^{\loc}\otimes\id"] & U_{\hbar}(\frhc^{\shear})\otimes \frhc \ar[d, "\fra\frd^{\loc}"] \\ U_{\hbar}(\frhc^{\shear})\otimes V_{\lambda_H} \otimes_{U_{\hbar}(\frhc^{\shear})}U_{\hbar}(\frhc^{\shear})\otimes V_{-w_0\lambda_H}\langle 2\rangle \ar[r, "\frd_{\lambda}^{\loc}"] & U_{\hbar}(\frhc^{\shear})\langle 2\rangle .\end{tikzcd} 
        }
    \end{equation}
    Under the identification \[\Hom^0((\frhc\boxtimes k)\otimes V_{\lambda},\PL_{X,\hbar}\langle 2\rangle)\cong \Hom^0_{\Mod_{U_{\hbar}(\frhc^{\shear})}^{\Hc}}( U_{\hbar}(\frhc^{\shear})\otimes  \frhc\otimes V_{\lambda_H}\otimes V_{-w_0\lambda_H},U_{\hbar}(\frhc^{\shear})\langle 2\rangle),\] the map $\fra\frd^{\loc}\circ \frd_{\lambda}^{\loc}$ corresponds to the upper right composition, and the map $\frd_{\lambda}^{\loc}\circ \fra\frd^{\loc}$ corresponds to the lower left composition. For $1\otimes X\otimes v \otimes v^* \in U_{\hbar}(\frhc^{\shear})\otimes  \frhc\otimes V_{\lambda_H}\otimes V_{-w_0\lambda_H}$, the diagram \eqref{diag:localcommutatordiag1} applied to this element gives \begin{equation}
    \begin{tikzcd}
        1\otimes X\otimes v \otimes v^* \ar[r, mapsto]\ar[d, mapsto] 
        & 1\otimes X\otimes 1\otimes v\otimes 1\otimes v^*  \ar[d, "\frd_{\lambda}^{\loc}\otimes\id", mapsto] \\ 
        1\otimes v\otimes 1\otimes X\otimes 1\otimes v^* \ar[d, "\id\otimes \fra\frd^{\loc}\otimes\id", mapsto]  
        & \counit_{V_{\lambda_H}}(v\otimes v^*) \otimes X \ar[d, "\fra\frd^{\loc}", mapsto] \\ 
        1\otimes v\otimes X\otimes v^* \ar[r, "\frd_{\lambda}^{\loc}", mapsto] 
        & \left\{
        \begin{aligned} 
            \counit_{V_{\lambda_H}}(v\otimes v^*) X \\ 
            \counit_{V_{\lambda_H}}(v\otimes v^*) X - \counit_{V_{\lambda_H}}(Xv\otimes v^*)\hbar 
        \end{aligned}
        \right.
        .
    \end{tikzcd} 
\end{equation}
    Here, the map $\counit_{V_{\lambda_H}}:V_{\lambda_H}\otimes V_{-w_0\lambda_H}\to k$ is the natural element given by the Satake cycle. This implies that \[[\fra\frd^{\loc},\frd_{\lambda}^{\loc}](1\otimes X\otimes v \otimes v^*)= \counit_{V_{\lambda_H}}(Xv\otimes v^*) \hbar=\hbar (\act_{\lambda_H}\boxtimes \id)^*\frd_{\lambda}^{\loc}(1\otimes X\otimes v \otimes v^*)\] as desired.

\end{proof}

\subsection{Local input for the intersection observable}
In this subsection, we carry out several local computations in the Plancherel algebra that will serve as the key local input for the computation of the intersection observable in \S\ref{sec:intobs}.

Consider the natural element \begin{equation}\unit_{V_{-w_0\lambda}}\in \Hom^0_{\Rep(\Gc)}(k, V_{\lambda}\otimes V_{-w_0\lambda})\cong \Hom^0_{\Sat_G}(\IC_{\lambda}*\IC_{-w_0\lambda},\delta_G)\end{equation} corresponding to the Satake cycle. It induces a map \begin{equation}\id\otimes \unit_{V_{-w_0\lambda}}:\frhc\boxtimes k\to (\frhc\boxtimes k)\otimes V_{\lambda}\otimes V_{-w_0\lambda}\cong V_{\lambda}\otimes(\frhc\boxtimes k)\otimes V_{-w_0\lambda}.\end{equation} Consider also the map \[\act_{V_{-w_0\lambda}}\boxtimes \id: (\frhc\boxtimes k)\otimes V_{-w_0\lambda}\to V_{-w_0\lambda} \] induced by the Lie algebra action.

\subsubsection{Local input for the derivative part}

Consider the invariant bilinear form \begin{equation}\label{eq:kappamin}\kappa_{\min}:\frhc\otimes\frhc\to k\end{equation} normalized such that $\kappa_{\min}(\alpha_{\mathrm{s}},\alpha_{\mathrm{s}})=2$ for any short root $\alpha_{\mathrm{s}}\in X^*(\Tc_H)$.\footnote{That is, $\kappa_{\min}=l_H^{-1}\kappa_{\mathrm{basic}}$ where $\kappa_{\mathrm{basic}}$ is the basic invariant form on $\frhc$ and $l_H$ is the lacing number of $H$.}

\begin{prop}\label{prop:localunit}
    The following identity holds \begin{equation}\label{eq:localunit1}(\id\otimes\unit_{V_{-w_0\lambda}})^*(\frd_{\det,\lambda}^{\loc}\circ (\act_{V_{-w_0\lambda_H}}\boxtimes \id)^* \frd_{-w_0\lambda}^{\loc})=(-1)^{d_{\lambda_H}+1 }\epsilon_{\lambda}\cdot \fra\frd^{\loc}\in \Hom^0_{\Rep(\Gc)}(\frhc\boxtimes k, \PL_{X,\hbar}\langle 2\rangle)\end{equation} where the number $\epsilon_{\lambda}$ is defined in \eqref{eq:epsilonformula}.
\end{prop}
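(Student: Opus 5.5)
The plan is to reduce everything, via the derived Satake equivalence (Theorem \ref{thm:derivedsatake}) in the form of Example \ref{eg:central}, to a linear-algebra computation in the bimodule model $\PL_{X,\hbar}\cong U_{\hbar}(\frhc^{\shear})$. Here $\Sat(V_{\lambda_H})$ corresponds to $U_{\hbar}(\frhc^{\shear})\otimes V_{\lambda_H}$.

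The first step is a rigidity observation. By the last isomorphism in the definition of $\fra\frd^{\loc}$, both sides of \eqref{eq:localunit1} lie in $\Hom^0_{\Rep(\Hc)}(\frhc,U_{\hbar}(\frhc^{\shear})\langle 2\rangle)$. The degree-$2$ part of $U_{\hbar}(\frhc^{\shear})$ is $\frhc\oplus k\hbar$. Since $H$ is almost simple, $\frhc$ is irreducible and $\Hom_{\Hc}(\frhc,k)=0$. Hence this Hom space is one-dimensional, spanned by the tautological map $\fra\frd^{\loc}$. So the left-hand side is automatically $c\cdot\fra\frd^{\loc}$ for a scalar $c$, and the whole task is to compute $c=(-1)^{d_{\lambda_H}+1}\epsilon_\lambda$. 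In particular, any $\hbar$-terms (such as those appearing in Proposition \ref{prop:localcommutator}) can be discarded. This also lets me work modulo $\hbar$, i.e.\ in the commutative model $\QCoh((\frhc^*)^{\shear}/\Hc)$.

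The second step, which I expect to be the main obstacle, is to identify the endomorphism $c_1(\cL_{\det})$ of $\IC_{\lambda_H}$ under derived Satake. I claim it corresponds to the degree-$2$ map
\[
V_{\lambda_H}\to \frhc\otimes V_{\lambda_H},\qquad v\mapsto \textstyle\sum_i x_i\otimes x^i v,
\]
i.e.\ the Lie action contracted with the dual basis $\{x^i\}$ of $\{x_i\}$ with respect to $\kappa_{\min}$. By $\Hc$-equivariance and degree reasons, this map is determined up to a scalar. To fix the scalar, I would first check that the corresponding class is the first Chern class of a line bundle, so it is additive and of the form $\kappa(\lambda,-)$ for an invariant form $\kappa$. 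I would then compare on the minimal $\mathbb{P}^1$ inside $\Gr_H$ attached to a short coroot $\alpha^\vee$ of $H$, i.e.\ a long/short root of $\frhc$. There the primitive ample generator has degree $1$. This matches the normalization $\kappa_{\min}(\alpha_{\mathrm s},\alpha_{\mathrm s})=2$ for short roots of $\frhc$, and this comparison is exactly where the lacing-number factor enters. The inversion convention for $\cL_{\det}$ fixed in \S\ref{sec:notation} determines the overall sign here.

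The third step is the computation itself. Start from $X\in\frhc$ and apply $\unit_{V_{-w_0\lambda}}$ to get $\sum_j X\otimes e_j\otimes e_j^*$. Then apply $\frd_{-w_0\lambda}^{\loc}$ precomposed with the action on $V_{-w_0\lambda_H}$, which pairs $e_j$ against $X e_j^*$. Then apply the Lefschetz map, which inserts $\sum_i x_i\otimes x^i$ before the pairing $\frd_\lambda^{\loc}$. Collecting terms gives
\[
X\mapsto \textstyle\sum_i \tr(x^i X, V_{\lambda_H})\,x_i .
\]
By \eqref{eq:epsilonformula}, $\tr(YX,V_{\lambda_H})=\epsilon_\lambda\kappa_{\min}(Y,X)$, so this equals $\epsilon_\lambda X$.

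The sign $(-1)^{d_{\lambda_H}+1}$ should come from two sources. One is the Koszul sign from moving the odd-shifted degree-$2$ object $\frhc\boxtimes k$ past $\IC_{\lambda_H}$, which has parity $d_{\lambda_H}$, in the super-enriched setting. The other is the sign from $\act$ on the dual, $(Xv^*)(v)=-v^*(Xv)$. I would track these carefully, together with the shift conventions $\langle n\rangle$.
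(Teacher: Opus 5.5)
Your overall route is the paper's. The one-dimensionality of $\Hom^0_{\Rep(\Hc)}(\frhc,U_{\hbar}(\frhc^{\shear})\langle 2\rangle)$ is a correct justification of the paper's opening reduction modulo $\hbar$. Your map $v\mapsto\sum_i x_i\otimes x^i v$ is exactly the Yun--Zhu description of $c_1(\cL_{\det})$ that the paper cites. The contraction $X\mapsto\sum_i\tr(x^iX,V_{\lambda_H})x_i=\epsilon_\lambda X$ is the paper's computation.

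\textbf{The gap: the factor $(-1)^{d_{\lambda_H}}$.} The mechanism you propose for it does not exist. $\frhc\boxtimes k$ lies in the heart and has even parity, since its highest weight lies in the coroot lattice of $H$, on which $\langle 2\rho_H,-\rangle$ is even. The shift $\langle 2\rangle=\Pi^2[2](1)$ is even, and $V_\lambda$ has even parity $2d_{\lambda_H}$. So there is no Koszul sign from moving $\frhc\boxtimes k$ past anything. The two sources you list would therefore produce only $-\epsilon_\lambda$, which is wrong whenever $d_{\lambda_H}$ is odd (e.g.\ the minuscule coweight of $\mathrm{PGL}_2$).

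The sign actually arises when you unfold the $G=H\times H$ data into $\Sat_H$. There $\unit_{V_{-w_0\lambda}}$ becomes two copies of $\unit_{V_{-w_0\lambda_H}}$. Composing the two correspondences contracts the \emph{inner} factors $V_{-w_0\lambda_H}\otimes V_{\lambda_H}$, one from each copy, by $\counit_{V_{-w_0\lambda_H}}$. To recognise the result as $\tr(Xx^i,V_{\lambda_H})$ via the duality datum, you must rewrite $\counit_{V_{-w_0\lambda_H}}=(-1)^{d_{\lambda_H}}\counit_{V_{\lambda_H}}\circ\sw$, using the super commutativity constraint on two factors of parity $d_{\lambda_H}$. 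Your single-index description (``pairs $e_j$ against $Xe_j^*$'') hides exactly this cross-contraction. Redo step three with both pairs written out, as in the paper's explicit composition.

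\textbf{A smaller issue in step two.} $\Hc$-equivariance and degree do \emph{not} determine a map $V_{\lambda_H}\to\frhc\otimes V_{\lambda_H}$ up to scalar in general. For $\frhc=\mathfrak{sl}_3$ and $V_{\lambda_H}=\frhc$, the adjoint representation occurs twice in $\End(V_{\lambda_H})$. What pins the map down is that $c_1(\cL_{\det})$ is a tensor derivation for convolution, which reduces the choice to $(\frhc\otimes\frhc)^{\Hc}=k$; your word ``additive'' only gestures at this. Moreover, the sign of this normalization is not settled by the inversion convention alone: it is precisely where the original statement in \cite[Proposition\,5.7]{yun2011integral} was wrong. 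Your $\mathbb{P}^1$ comparison can fix the absolute value of the scalar (note that a short coroot of $H$ is simply a short root of $\frhc$). Fixing the sign, however, needs an actual computation, e.g.\ torus localization. The paper sidesteps this by citing Yun--Zhu together with the correction in \cite[\S5.3]{riche2017kostant}.
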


\begin{proof}
    Since the reduction modulo $\hbar$ map induces an isomorphism \[\Hom^0_{\Rep(\Gc)}(\frhc\boxtimes k, \PL_{X,\hbar}\langle 2\rangle)\cong \Hom^0_{\Rep(\Gc)}(\frhc\boxtimes k, \PL_{X}\langle 2\rangle),\] we can work with the commutative Plancherel algebra $\PL_X$ instead of the noncommutative Plancherel algebra $\PL_{X,\hbar}$. 

    For any $V\in \Rep(\Hc)$, by \cite[Proposition\,5.7]{yun2011integral}, the element \[c_1(\cL_{\det})\in \Hom^0_{\Sat_H}(\Sat(V),\Sat(V)\langle 2\rangle)\cong \Hom^0_{\QCoh((\frhc^*)^{\shear}/\Hc)}(V \otimes\cO,V\otimes \cO\langle 2\rangle)\] is given by the morphism whose fiber at $X\in \frhc^*$ is given by $v\mapsto \kappa_{\min}(X)v$ where $\kappa_{\min}:\frhc^*\isom \frhc$ is induced by \eqref{eq:kappamin}.\footnote{There is a sign error in \cite[Proposition\,5.7]{yun2011integral}, as noted and corrected in the footnote of \cite[\S5.3]{riche2017kostant}.} Under the identification \[\Hom^0_{\Rep(\Gc)}(\frhc\boxtimes k,\PL_{X}\langle 2\rangle)\cong \Hom^0_{\QCoh((\frhc^*)^{\shear}/\Hc)}(\frhc\otimes\cO,\cO\langle 2\rangle),\] the left-hand side of \eqref{eq:localunit1} modulo $\hbar$ corresponds to the composition \begin{equation}\begin{split}
         & \frhc\otimes\cO \\ 
         \xrightarrow{\id  \otimes \unit_{V_{-w_0\lambda_H}} \otimes  \unit_{V_{-w_0\lambda_H}}  } & \frhc\otimes V_{\lambda_H}\otimes V_{-w_0\lambda_H}\otimes  V_{\lambda_H}\otimes V_{-w_0\lambda_H} \otimes  \cO \\ 
         \xrightarrow{\sw\otimes\id\otimes\id\otimes\id  } & V_{\lambda_H} \otimes \frhc\otimes V_{-w_0\lambda_H}\otimes  V_{\lambda_H}\otimes V_{-w_0\lambda_H} \otimes  \cO \\ 
         \xrightarrow{\id\otimes \act_{V_{-w_0\lambda_H}} \otimes \id \otimes  \id } & V_{\lambda_H}\otimes V_{-w_0\lambda_H}\otimes  V_{\lambda_H}\otimes V_{-w_0\lambda_H} \otimes  \cO \\ 
        \xrightarrow{c_1(\cL_{\det})\otimes\counit_{V_{-w_0\lambda_H}}\otimes \id} & V_{\lambda_H}\otimes V_{-w_0\lambda_H}\otimes\cO \langle 2\rangle \\
         \xrightarrow{\counit_{V_{\lambda_H}} } & \cO\langle 2\rangle
    \end{split}\end{equation} where $\sw$ is the morphism swapping two factors.

    Using the fact that $\counit_{V_{-w_0\lambda_H}}$ and $\unit_{V_{-w_0\lambda_H}}$ define a duality datum and \[\counit_{V_{-w_0\lambda_H}}=(-1)^{d_{\lambda_H} }\counit_{V_{\lambda_H}}\circ \sw\in \Hom^0_{\Rep(\Hc)}(V_{-w_0\lambda_H}\otimes V_{\lambda_H},k),\] one checks that this composition coincides with the right-hand side of \eqref{eq:localunit1} modulo $\hbar$ as desired.
\end{proof}

\subsubsection{Local input for the scalar part}
Now we come to another local computation.

\begin{prop}\label{prop:scalarpartloc}
    We have \[\unit_{V_{-w_0\lambda}}^*(\frd_{\det,\lambda}^{\loc}\circ \frd_{-w_0\lambda}^{\loc})=(-1)^{d_{\lambda_H} +1}b_{\lambda}\hbar \in \Hom^0_{\Rep(\Gc)}(k, \PL_{X,\hbar}\langle 2\rangle)\] where \begin{equation}\label{eq:bformula}
    b_{\lambda}=\frac{\dim(\frhc)}{2}\epsilon_{\lambda}.
\end{equation}
\end{prop}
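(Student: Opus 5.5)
Since $\Hc$ is semisimple, $\frhc^{\Hc}=0$. Hence $\Hom^0_{\Rep(\Gc)}(k,\PL_{X,\hbar}\langle 2\rangle)\cong \Hom^0_{\Rep(\Hc)}(k,U_{\hbar}(\frhc^{\shear})\langle 2\rangle)$ is the one-dimensional space spanned by $\hbar$: the degree-$2$ invariants of $U_{\hbar}(\frhc^{\shear})$ are $k\hbar\oplus(\frhc)^{\Hc}=k\hbar$. So the statement amounts to identifying a single scalar.

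That scalar is invisible modulo $\hbar$, so unlike Proposition \ref{prop:localunit} I cannot reduce to the commutative Plancherel algebra $\PL_X$. I plan to work directly in the bimodule model of Theorem \ref{thm:derivedsatake} and Example \ref{eg:central}, following the element-chasing of Proposition \ref{prop:localcommutator}.

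\textbf{Step 1: lift $c_1(\cL_{\det})$ to the $\hbar$-deformed setting.} By \cite[Proposition\,5.7]{yun2011integral}, modulo $\hbar$ the endomorphism $c_1(\cL_{\det})$ of $\Sat(V)$ is $1\otimes v\mapsto \sum_a X_a\otimes X^a v$, where $\{X_a\}$ and $\{X^a\}$ are dual bases of $\frhc$ for $\kappa_{\min}$. Its $\hbar$-lift must be a morphism of $U_{\hbar}(\frhc^{\shear})$-bimodules $U_{\hbar}\otimes V\to U_{\hbar}\otimes V\langle 2\rangle$. A general such lift has the form $1\otimes v\mapsto \sum_a X_a\otimes X^a v + c\,\hbar\, v$, where $c\in \End_{\Hc}(V)$ has degree zero. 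Compatibility with the twisted right action $(Y\otimes v)\cdot X=YX\otimes v-\hbar Y\otimes Xv$ constrains how $\hbar$-terms can appear.

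I expect the correct normalization to be the symmetric one, $1\otimes v\mapsto \sum_a\tfrac12\bigl(X_a\otimes X^a v + (\text{right-moved version})\bigr)$. Equivalently, $c=-\tfrac12\,\Omega_V$, where $\Omega_V=\sum_a X_aX^a$ is the Casimir. I would justify this by using the duality/self-adjointness of $c_1(\cL_{\det})$ under the Verdier pairing defining $\counit$, which forces antisymmetry under the anti-involution of $U_{\hbar}$. An alternative justification is to use loop-rotation equivariance on $\Gr_{H,\leq\lambda_H}$ together with the known $\hbar$-linear term of the equivariant first Chern class. Pinning down this $\hbar$-correction is the main obstacle, and it is where the factor $\tfrac12$ in \eqref{eq:bformula} should originate.

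\textbf{Step 2: chase $\unit_{V_{-w_0\lambda}}$ through the composite.} Write $\unit_{V_{-w_0\lambda}}=\sum_i v_i\otimes v_i^*$. Following the composite $\frd_{\det,\lambda}^{\loc}\circ\frd_{-w_0\lambda}^{\loc}$ as in the proof of Proposition \ref{prop:localunit}, one gets an expression of the form $\sum_{i,a}\counit(\cdots X^a v_i\otimes v_i^*)\,X_a$ plus $\hbar$-terms. Moving $X_a$ across $\otimes_{U_{\hbar}}$ with the right action of Example \ref{eg:central} produces the $\frhc$-component, and that component cancels: $\sum_i \counit(X^a v_i\otimes v_i^*)=\tr(X^a,V_{\lambda_H})=0$. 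What survives is $\hbar$ times the trace of a multiple of $\Omega_{V_{\lambda_H}}$. The signs $(-1)^{d_{\lambda_H}+1}$ arise exactly as in Proposition \ref{prop:localunit}, from $\counit_{V_{-w_0\lambda_H}}=(-1)^{d_{\lambda_H}}\counit_{V_{\lambda_H}}\circ\sw$ and the shift.

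\textbf{Step 3: evaluate the trace.} The Casimir acts on $V_{\lambda_H}$ by the scalar $\kappa_{\min}(\lambda_H,\lambda_H+2\rho_H)$, so
\[
\tr(\Omega,V_{\lambda_H})=\kappa_{\min}(\lambda_H,\lambda_H+2\rho_H)\dim V_{\lambda_H}=\epsilon_\lambda\dim\frhc
\]
by \eqref{eq:epsilonformula}. Combined with the $\tfrac12$ from Step 1, this gives $b_\lambda=\tfrac{\dim\frhc}{2}\epsilon_\lambda$.

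As a consistency check, I would also derive the constant a second way. Write $\hbar\,\unit^*(\cdots)$ via Proposition \ref{prop:localcommutator}: the commutator $[\fra\frd^{\loc},\frd^{\loc}_\lambda]$ contracted against dual bases of $\frhc$ should reproduce $\hbar$ times the contraction of Proposition \ref{prop:localunit}, which is $\sum_a\kappa_{\min}(X_a,X^a)\,\epsilon_\lambda=\dim\frhc\cdot\epsilon_\lambda$. Agreement up to the factor $\tfrac12$ and the sign would confirm the normalization chosen in Step 1.
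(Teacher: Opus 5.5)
Your proposal is essentially the paper's proof. The paper likewise identifies the target with $k\hbar$ and uses the lift $c_1(\cL_{\det})(a\otimes v)=\sum_a aX_a\otimes X^a v-\frac{\hbar}{2}\sum_a a\otimes X_aX^a v$. It then reads off $b_\lambda$ from $\tr(\Omega,V_{\lambda_H})=\epsilon_\lambda\dim\frhc$, where $\Omega=\sum_a X_aX^a$. The only organizational difference is that the paper first collapses the composite via the zigzag identities to $(-1)^{d_{\lambda_H}}\tr_{\Sat_{H,\hbar}}(c_1(\cL_{\det}),\IC_{\lambda_H})$.

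For your Step 1, the paper fixes the $\hbar$-term using the fact that $c_1(\cL_{\det})$ is a derivation for convolution. Together with the mod-$\hbar$ formula this determines the term uniquely (since $\frhc^{\Hc}=0$), and it is the cleanest way to make your step rigorous. Note also that the extra minus sign in $(-1)^{d_{\lambda_H}+1}$ is exactly this $-\tfrac12$, i.e.\ $\tr(c_1(\cL_{\det}),\IC_{\lambda_H})=-b_\lambda\hbar$, not a shift. Finally, your proposed check through Proposition \ref{prop:localcommutator} cannot confirm the factor $\tfrac12$, because Proposition \ref{prop:localunit} only involves $c_1(\cL_{\det})$ modulo $\hbar$.
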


\begin{proof}
Under the identification $\Hom^0_{\Rep(\Gc)}(k, \PL_{X,\hbar}\langle 2\rangle)\cong \Hom^0_{\Sat_{H,\hbar}}(\delta_{H,\hbar},\delta_{H,\hbar}\langle 2\rangle)$, the left-hand side of the desired identity corresponds to the composition 
\begin{equation}\label{eq:bcalculation1}
    \begin{split}
        & \delta_{H,\hbar} \\ 
        \xrightarrow{\unit_{V_{-w_0\lambda_H}}*\unit_{V_{-w_0\lambda_H}}} & \IC_{\lambda_H} * \IC_{-w_0\lambda_H} * \IC_{\lambda_H} * \IC_{-w_0\lambda_H} \\
        \xrightarrow{c_1(\cL_{\det})*\counit_{V_{-w_0\lambda_H}}*\id} & \IC_{\lambda_H} * \IC_{-w_0\lambda_H} \langle 2\rangle \\
        \xrightarrow{\counit_{V_{\lambda_H}}} & \delta_{H,\hbar}\langle 2\rangle
    \end{split}
.\end{equation} Using the same identities as in the proof of Proposition \ref{prop:localunit}, one checks that this composition coincides with the categorical trace\footnote{Note that categorical trace is usually defined for symmetric monoidal categories. While $\Sat_{H,\hbar}$ is not symmetric monoidal, the left and right dual of $\IC_{\lambda_H}$ are canonically identified by using the commutative constraint for $\Sat_{H,\hbar}^{\heartsuit}\cong \Rep(\Hc)^{\heartsuit}$.} \[(-1)^{d_{\lambda_H}}\tr_{\Sat_{H,\hbar}}(c_1(\cL_{\det}),\IC_{\lambda_H})\in \Hom^0_{\Sat_{H,\hbar}}(\delta_{H,\hbar},\delta_{H,\hbar}\langle 2\rangle).\] Since $\Hom^0_{\Sat_{H,\hbar}}(\delta_{H,\hbar},\delta_{H,\hbar}\langle 2\rangle)\cong k\cdot \hbar$ is one-dimensional, we have $\tr_{\Sat_{H,\hbar}}(c_1(\cL_{\det}),\IC_{\lambda_H})=-b_{\lambda}\hbar$ for some $b_{\lambda}\in k$.

Now we compute $b_{\lambda}$ by translating \eqref{eq:bcalculation1} to the spectral side. Recall that for any $V\in \Rep(\Hc)$ the element $c_1(\cL_{\det})\in \Hom^0_{\Sat_{H,\hbar}}(U_{\hbar}(\frhc^{\shear})\otimes V,U_{\hbar}(\frhc^{\shear})\otimes V\langle 2\rangle)$ is given by \[
c_1(\cL_{\det})(a\otimes v)  = \sum_i aX_i\otimes X^iv-\frac{\hbar}{2}\sum_i a\otimes X_iX^i v
\] where $a\in U_{\hbar}(\frhc^{\shear})$, $v\in V$, and $\{X_i\}$ and $\{X^i\}$ are dual bases of $\frhc$ with respect to the invariant bilinear form $\kappa_{\min}$.\footnote{This can be shown, for example, using the fact that \(c_1(\cL_{\det})\) is a derivation with respect to the monoidal structure on \(\Sat_{H,\hbar}\), together with the description of its reduction modulo \(\hbar\) used in the proof of Proposition \ref{prop:localunit}.} A straightforward computation shows that \[\tr_{\Sat_{H,\hbar}}(c_1(\cL_{\det}),U_{\hbar}(\frhc^{\shear})\otimes V_{\lambda_H})=-\frac{\dim(\frhc)}{2}\epsilon_{\lambda}\hbar.\] This concludes the proof.

\end{proof}

\subsection{Global special cohomological correspondences}
Now we turn to global special cohomological correspondences. 
Define the period sheaf
\begin{equation}\label{eq:periodsheaf}
\cP_X:=\Delta_!\uk_{\Bun_H}\in \Shv(\Bun_G),
\end{equation}
where $\Delta:\Bun_H\to \Bun_G$ is the diagonal morphism.

\subsubsection{Definition}
Given a correspondence between Artin stacks \[B \xleftarrow{\lh} C \xrightarrow{\rh} A\] and sheaves $\cF\in\Shv(A), \cG\in \Shv(B), \cK\in \Shv(C)$, we define the space of cohomological correspondences between $\cF$ and $\cG$ with kernel $\cK$ to be the vector space \[\Cor_{C,\cK}(\cF,\cG):=\Hom^0(\rh^*\cF \otimes \cK,\lh^!\cG).\] We refer to \cite[\S4.2]{liu2025higherperiodintegralsderivatives} and \cite[\S2.2]{wang2026arithmeticvolumeshtukaslanglands} for operations on cohomological correspondences with kernels.

A \emph{global special cohomological correspondence} of degree $d$ with kernel $\cK_I\in \Shv(\Hk_{G,I})$ is an element \[\frc_{\cK_I} \in \Cor_{\Hk_{G,I},\cK_I}(\cP_X\boxtimes\uk_{C^I},\cP_X\boxtimes\uk_{C^I}\langle d\rangle).\]

Given two finite sets $I$ and $J$ with $|I|=r$ and $|J|=s$, and given two sheaves $\cK_I\in \Shv(\Hk_{G,I})$ and $\cK_J\in \Shv(\Hk_{G,J})$, there is an obvious operation 
\begin{equation} \begin{split}
    \boxcirc:\Cor_{\Hk_{G,I},\cK_I}(\cP_X\boxtimes\uk_{C^I},\cP_X\boxtimes\uk_{C^I})\otimes \Cor_{\Hk_{G,J},\cK_J}(\cP_X\boxtimes\uk_{C^J},\cP_X\boxtimes\uk_{C^J}) \\ \to \Cor_{\Hk_{G,I\sqcup J},\cK_I\widetilde{\boxtimes} \cK_J}(\cP_X\boxtimes\uk_{C^{I\sqcup J}},\cP_X\boxtimes \uk_{C^{I\sqcup J}})
\end{split}.\end{equation}

When $\cK_I=\Sat(V^I)$ and $\cK_J=\Sat(V^J)$ for $V^I\in\Rep(\Gc^I)$ and $V^J\in\Rep(\Gc^J)$, under the canonical isomorphism \[\Cor_{\Hk_{G,I\sqcup J},\Sat(V^I\boxtimes V^J)}(\cP_X\boxtimes\uk_{C^{I\sqcup J}},\cP_X\boxtimes \uk_{C^{I\sqcup J}})\cong \Cor_{\Hk_{G,J\sqcup I},\Sat(V^J\boxtimes V^I)}(\cP_X\boxtimes\uk_{C^{I\sqcup J}},\cP_X\boxtimes \uk_{C^{I\sqcup J}}), \] as in \cite[Conjecture\,4.45]{liu2025higherperiodintegralsderivatives}, one has the commutator operation \begin{equation}\begin{split}
    [~,~]:\Cor_{\Hk_{G,I},\Sat(V^I)}(\cP_X\boxtimes\uk_{C^I},\cP_X\boxtimes\uk_{C^I})\otimes \Cor_{\Hk_{G,J},\Sat(V^J)}(\cP_X\boxtimes\uk_{C^J},\cP_X\boxtimes\uk_{C^J}) \\  \to \Cor_{\Hk_{G,I\sqcup J},\Sat(V^I\boxtimes V^J)}(\cP_X\boxtimes\uk_{C^{I\sqcup J}},\cP_X\boxtimes \uk_{C^{I\sqcup J}})\end{split}.
\end{equation} defined by \[[\frc_{V^I},\frc_{V^J}]=\frc_{V^I}\boxcirc \frc_{V^J}-\frc_{V^J}\boxcirc \frc_{V^I}\] for $\frc_{V^I}\in \Cor_{\Hk_{G,I},\Sat(V^I)}(\cP_X\boxtimes\uk_{C^I},\cP_X\boxtimes\uk_{C^I})$ and $\frc_{V^J}\in \Cor_{\Hk_{G,J},\Sat(V^J)}(\cP_X\boxtimes\uk_{C^J},\cP_X\boxtimes\uk_{C^J})$.

\subsubsection{Local-to-global procedure}
There is a canonical morphism \begin{equation}\label{eq:globalization}\glob:\Hom^0(V,\PL_{X,\hbar}) \to \Cor_{\Hk_{G},\Sat(V)}(\cP_X\boxtimes\uk_C,\cP_X\boxtimes\uk_C)\end{equation} sending a local special cohomological correspondence to a global special cohomological correspondence as defined in \cite[Definition\,4.40]{liu2025higherperiodintegralsderivatives}. This procedure is compatible with composition of cohomological correspondences in the sense that for $\frc_V^{\loc}\in \Hom^0(V,\PL_{X,\hbar})$ and $\frc_W^{\loc}\in \Hom^0(W,\PL_{X,\hbar})$, one has \begin{equation}\label{eq:ltogassoc}
    \glob(\frc_V^{\loc}\circ \frc_W^{\loc})=\glob(\frc_V^{\loc})\circ \glob(\frc_W^{\loc}).
\end{equation}

\subsubsection{Examples of global special cohomological correspondences}

Consider the diagram \begin{equation}
    \begin{tikzcd}
        \Bun_H\times C \ar[d, "\D\times\id"] & \Hk_{H,\leq\lambda_H} \ar[l] \ar[r] \ar[d, "\D^{\Hk}"] & \Bun_H \times C \ar[d, "\D\times\id"] \\
        \Bun_G\times C & \Hk_{G,\leq\lambda} \ar[l] \ar[r]  & \Bun_G \times C
    \end{tikzcd}
\end{equation}
Define the \emph{global diagonal cohomological correspondence} \begin{equation}\label{eq:diagcorglob}\frd_{\lambda}:=\D^{\Hk}_![\Hk_{H,\leq\lambda_H}/\Bun_H\times C]\in \Cor_{\Hk_{G},\IC_{\lambda}}(\cP_X\boxtimes\uk_C,\cP_X\boxtimes\uk_C)\end{equation} as considered in \cite[\S1.3.2]{wang2025specialcycleshtukascategorical}.

Similarly, given a sequence of coweights $\lambda_{H,I}=(\lambda_{H,i})_{i\in I}\in X_*(T_H)^I_+$ for the finite set $I=\{1,2,\dots,r\}$, define $\lambda_I=(\lambda_{H,I},\lambda_{H,I})\in X_*(T)^I_+$.

One considers the diagram \begin{equation}
    \begin{tikzcd}
        \Bun_H\times C^I \ar[d, "\D\times\id"] & \Hk_{H,\leq\lambda_{H,I}} \ar[l] \ar[r] \ar[d, "\D^{\Hk}"] & \Bun_H \times C^I \ar[d, "\D\times\id"] \\
        \Bun_G\times C^I & \Hk_{G,\leq\lambda_I} \ar[l] \ar[r]  & \Bun_G \times C^I
    \end{tikzcd}
\end{equation} and defines the multi-leg version \begin{equation}\label{eq:diagcorglobmulti}
    \frd_{\lambda_I}:=\D^{\Hk}_![\Hk_{H,\leq\lambda_{H,I}}/\Bun_H\times C^I]\in \Cor_{\Hk_{G,I},\IC_{\lambda_I}}(\cP_X\boxtimes\uk_{C^I},\cP_X\boxtimes\uk_{C^I}).
\end{equation}


Define the \emph{global Lefschetz cohomological correspondence}

\begin{equation}\label{eq:lefcorglob}
    \begin{split}
\frd_{\det,\lambda}:=\D^{\Hk}_!([\Hk_{H,\leq\lambda_H}/\Bun_H\times C]\cup c_1(\cL_{\det})) \\ \in \Cor_{\Hk_{G},\IC_{\lambda}}(\cP_X\boxtimes\uk_C,\cP_X\boxtimes\uk_C\langle 2\rangle) \end{split}
\end{equation}
and its multi-leg version
\begin{equation}\label{eq:lefcorglobmulti} \begin{split}
\frd_{\det,\lambda_I}:=\D^{\Hk}_!([\Hk_{H,\leq\lambda_{H,I}}/\Bun_H\times C^I]\cup \prod_{i=1}^r c_1(\cL_{\det,i})) \\ \in \Cor_{\Hk_{G,I},\IC_{\lambda_I}}(\cP_X\boxtimes\uk_{C^I},\cP_X\boxtimes\uk_{C^I}\langle 2r\rangle)
.\end{split}\end{equation} 

It is easy to see that 
\begin{equation}
    \glob(\frd_{\lambda}^{\loc}) = \frd_{\lambda}.
\end{equation} 
\begin{equation}
    \glob(\frd_{\det,\lambda}^{\loc}) = \frd_{\det,\lambda}.
\end{equation}

Define the \emph{global adjoint cohomological correspondence} to be \begin{equation}\label{eq:adjointcorglob}
    \fra\frd:=\glob(\fra\frd^{\loc})  \in \Cor_{\Hk_{G},\Sat(\frhc\boxtimes k)}(\cP_X\boxtimes\uk_C,\cP_X\boxtimes\uk_C\langle 2\rangle) .
\end{equation}

\subsubsection{A global commutator relation}

Consider the map \begin{equation}
    \D_{C,!}:\Cor_{\Hk_G,\Sat(\frhc\boxtimes k)*\IC_{\lambda}}(\cP_X\boxtimes\uk_C,\cP_X\boxtimes\uk_C)\to \Cor_{\Hk_{G,\{1,2\}},\Sat(\frhc\boxtimes k)\widetilde{\boxtimes}\IC_{\lambda}}(\cP_X\boxtimes\uk_{C^2},\cP_X\boxtimes \uk_{C^2}\langle 2\rangle) .\end{equation}

\begin{prop}\label{prop:globalcommutator}
    Assuming $g\neq 1$, the following identity holds \[[\fra\frd,\frd_{\lambda}]=\D_{C,!}((\act_{V_{\lambda_H}}\boxtimes \id)^*\frd_{\lambda}).\]
\end{prop}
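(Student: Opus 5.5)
We will deduce the global commutator relation from the local one, Proposition~\ref{prop:localcommutator}, by applying the local-to-global map $\glob$. Then we analyze what the commutator of two one-leg global correspondences becomes on the two-leg Hecke stack $\Hk_{G,\{1,2\}}$.

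\textbf{Step 1: reduce to the diagonal.} Write $j:U\hookrightarrow C^2$ for the complement of the diagonal $\D_C:C\to C^2$. Over $U$ the two legs are disjoint. The iterated Hecke stack $\Hk_{G,\{1,2\}}|_U$ is then a product of two independent one-leg modifications, and $\Sat(\frhc\boxtimes k)\widetilde{\boxtimes}\IC_{\lambda}$ restricts to an external product there. The two compositions $\fra\frd\boxcirc\frd_{\lambda}$ and $\frd_{\lambda}\boxcirc\fra\frd$ are identified over $U$ by the commutativity isomorphism used to define $[~,~]$, so $j^*[\fra\frd,\frd_{\lambda}]=0$.

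Both sides therefore lie in the part of the correspondence space supported on the diagonal. Using the localization triangle $\D_{C,*}\D_C^!\to\id\to j_*j^*$ on the kernel, we get an exact sequence
\[
\Cor_{\Hk_G,\Sat(\frhc\boxtimes k)*\IC_{\lambda}}(\cP_X\boxtimes\uk_C,\cP_X\boxtimes\uk_C)\xrightarrow{\D_{C,!}}\Cor_{\Hk_{G,\{1,2\}},\dots}(\dots\langle 2\rangle)\to\Cor|_U .
\]
So the commutator is $\D_{C,!}$ of some class $\frc\in\Cor_{\Hk_G,\Sat(\frhc\boxtimes k)*\IC_{\lambda}}$. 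The class $\frc$ is well defined up to the image of the connecting map from the degree-shifted correspondences on $U$. I will show this ambiguity vanishes, or is irrelevant, by a degree argument: the correspondences over $U$ in degree $-1$ should vanish for weight or parity reasons. This is the first place the hypothesis $g\neq 1$ may enter.

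\textbf{Step 2: identify $\frc$ via the local computation.} The key point is that the restriction to the diagonal of a commutator of globalized local correspondences is computed locally. Fusion along $\D_C$ identifies $\Sat(\frhc\boxtimes k)\widetilde{\boxtimes}\IC_\lambda$ near the diagonal with the convolution. The residue of the commutator along the diagonal is then the globalization of the local commutator, with $\hbar$ matched to the normal class of $\D_C\subset C^2$. That normal class is the first Chern class of $T_C$, which corresponds to $\hbar=c_1(T_D)$ under the $\Aut(D)$-torsor of formal coordinates.

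Concretely, I would check this on $\Hk^{\loc}$, using \eqref{eq:ltogassoc} and the construction of $\glob$ in \cite[Definition\,4.40]{liu2025higherperiodintegralsderivatives}, which factors through $(L^+G\rtimes\Aut(D))$-equivariant local data. This gives
\[
\frc=\glob\bigl(\hbar^{-1}[\fra\frd^{\loc},\frd^{\loc}_\lambda]\bigr)=\glob\bigl((\act_{V_{\lambda_H}}\boxtimes\id)^*\frd^{\loc}_\lambda\bigr)=(\act_{V_{\lambda_H}}\boxtimes\id)^*\frd_\lambda ,
\]
where the middle equality is Proposition~\ref{prop:localcommutator}. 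The final equality holds because pullback along a map in $\Rep(\Gc)$ commutes with $\glob$, and $\glob(\frd^{\loc}_\lambda)=\frd_\lambda$.

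\textbf{Main obstacle.} The main difficulty is making Step 2 rigorous: showing that "divide by $\hbar$ and globalize" equals "take the part supported on $\D_C$". This requires a deformation-to-the-normal-cone (specialization) argument comparing the two-leg Hecke stack near $\D_C$ with the $\Aut(D)$-twisted local Hecke stack, keeping track of sign and Tate-twist conventions.

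I would first try to reduce to the universal case over the stack of formal discs, where the local commutator relation applies directly. Then I would pull back along the coordinate torsor over $C$. Here the class $\hbar$ pulls back to $c_1(T_C)$, whose degree is $2-2g$. The hypothesis $g\neq1$ is presumably used exactly here: it makes this class invertible enough on the relevant cohomology, which both removes the ambiguity in Step 1 and lets us divide by $\hbar$ globally.
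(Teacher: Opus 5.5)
Your top-level route matches the paper's: obtain the proposition by globalizing Proposition \ref{prop:localcommutator}. The paper, however, does not do the local-to-global passage by hand. It invokes \cite[Theorem\,4.47]{liu2025higherperiodintegralsderivatives}, noting that \cite[Assumption\,4.46(1)]{liu2025higherperiodintegralsderivatives} holds in the group case and that \cite[Assumption\,4.46(2)]{liu2025higherperiodintegralsderivatives} is exactly $g\neq 1$. You instead try to reprove that theorem, and the core is left open: you flag Step 2 as the main obstacle without resolving it. The two mechanisms you sketch for the rest also fail as stated.
\begin{itemize}
\item The class $\frc$ with $[\fra\frd,\frd_\lambda]=\D_{C,!}\frc$ is only defined modulo $\ker\D_{C,!}$. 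So identifying $\frc$ with $\glob(\hbar^{-1}[\fra\frd^{\loc},\frd^{\loc}_\lambda])$ has no content until you can detect classes supported on the diagonal.
\item Your removal of this ambiguity (vanishing of degree $-1$ correspondences over $U$ ``for weight or parity reasons'') has no basis. Even the base already has $H^1(U)\cong H^1(C^2)\neq 0$ for $g\geq 1$.
\item $g\neq 1$ cannot make $c_1(T_C)$ ``invertible'', since its square is zero in $H^4(C)=0$.
\end{itemize}

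What the argument actually needs is the following. Put $\frc_0=(\act_{V_{\lambda_H}}\boxtimes\id)^*\frd_\lambda$ and $Z=[\fra\frd,\frd_\lambda]-\D_{C,!}\frc_0$.
\begin{itemize}
\item Your Step 1 gives $j^*Z=0$.
\item On the diagonal, $\D_C^*[\fra\frd,\frd_\lambda]=\glob([\fra\frd^{\loc},\frd^{\loc}_\lambda])$ by \eqref{eq:ltogassoc}, together with compatibility of the commutativity constraint with fusion. By Proposition \ref{prop:localcommutator} and $\mathrm{Coor}^*\hbar=c_1(T_C)$, this equals $c_1(T_C)\cup\frc_0$.
\item The self-intersection formula gives $\D_C^*\D_{C,!}\frc_0=c_1(T_C)\cup\frc_0$ as well, up to conventions.
\end{itemize}
So $\D_C^*Z=0$, and no deformation to the normal cone is needed.

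The missing step is then: write $Z=\D_{C,!}\frc'$ with $c_1(T_C)\cup\frc'=0$, and conclude $\D_{C,!}\frc'=0$. This fails for arbitrary classes. On $H^*(C)$, cup product with $c_1(T_C)$ kills $H^1$ and $H^2$, while $\D_{C,!}$ is injective. So you need further control of the degree-$0$ correspondence spaces, for instance that the relevant classes are of ``$H^0(C)$-type''. On such classes $c_1(T_C)\cup$ is injective exactly because $\deg T_C=2-2g\neq 0$; when $g=1$ the diagonal restriction carries no information at all. That input is what the cited theorem and its hypotheses supply. Without it, or the citation, your argument does not close.
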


\begin{proof}
    This follows directly from \cite[Theorem\,4.47]{liu2025higherperiodintegralsderivatives} and the local commutator relation in Proposition \ref{prop:localcommutator}. Note that \cite[Assumption\,4.46(1)]{liu2025higherperiodintegralsderivatives} is satisfied in our case, and the condition $g\neq 1$ is \cite[Assumption\,4.46(2)]{liu2025higherperiodintegralsderivatives}.
\end{proof}

\section{Geometric period integral}\label{sec:geometricperiod}
In this section, we study the geometric period integral of a Hecke eigensheaf in the group case. The key technical result is Theorem \ref{thm:clifford}, which constructs a Clifford algebra action on the resulting vector space. Moreover, in \S\ref{sec:intobs}, we introduce the intersection observable and compute it explicitly, which will play a central role in the study of the intersection number of diagonal cycles in \S\ref{sec:diagcycle}.

\subsection{Definition of the geometric period integral}\label{sec:geometricperioddef}
Consider the map $\D:\Bun_H\to \Bun_G$ given by the diagonal embedding. Define the geometric period integral to be the functor \begin{equation}
    \int_X: \Shv(\Bun_G)\to \Vect,~\cF\mapsto \Gamma_c(\Bun_H, \D^*\cF).
\end{equation}

Moreover, for each $e\in \pi_1(H)\cong \pi_0(\Bun_H)$, define the $e$-component of the geometric period integral to be the functor \begin{equation}
    \int_{X,e}:\Shv(\Bun_G)\to \Vect,~\cF\mapsto \Gamma_c(\Bun_H^e, \D^*\cF).
\end{equation}

We use the notations in \S\ref{sec:cohshtuka}. Fix a geometrically strongly irreducible $\Hc$-local system $\s_H\in \Loc_{\Hc}^{\arith}(k)$ and let $\s=(\s_H, c_{\Hc}(\s_H)) \in \Loc_{\Gc}^{\arith}(k)$. Let $\cF_{\s_H} \in \Shv(\Bun_H)$ be a Weil Hecke eigensheaf with eigenvalue $\s_H$ that is perverse irreducible on each connected component of $\Bun_H$, and take \begin{equation}\label{eq:eigensheaf}\cF_{\s}:=\cF_{\s_H}\boxtimes\mathbb{D}^{\mathrm{Ver}}(\cF_{\s_H})\in \Shv(\Bun_G),\end{equation} which is a Hecke eigensheaf with eigenvalue $\s$. 

\subsubsection{The graded dimension}

We first study the graded dimension of the geometric period integral.

\begin{prop}\label{prop:dim}
    For each $e\in \pi_1(H)$ and $i\in \ZZ$, we have \[\dim H^{i}(\int_{X,e}\cF_{\s})=\dim  \wedge^{-i}H^1(C,\frhc_{\s_H}).\]
\end{prop}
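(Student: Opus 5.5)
The plan is to identify $\int_{X,e}\cF_{\s}$ with a self-Ext group of the eigensheaf and then to compute that Ext group spectrally. Since $\D^*\cF_{\s}=\cF_{\s_H}\otimes \D^*\boxtimes\mathbb{D}^{\mathrm{Ver}}(\cF_{\s_H})$ restricted along the diagonal, we have
\[
\Gamma_c(\Bun_H^e,\cF_{\s_H}\otimes\mathbb{D}^{\mathrm{Ver}}\cF_{\s_H})\cong \Hom(\cF_{\s_H}|_{\Bun_H^e},\cF_{\s_H}|_{\Bun_H^e})^*,
\]
up to a shift and twist coming from the normalization of Verdier duality (the dualizing sheaf on the smooth stack $\Bun_H^e$, which is absorbed by the perverse normalization). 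So the graded dimension of $H^{i}(\int_{X,e}\cF_{\s})$ equals that of $\Ext^{-i}(\cF_{\s_H}^e,\cF_{\s_H}^e)$. With the parity conventions of \S\ref{sec:notation}, the claim reduces to
\[
\Ext^*_{\Shv(\Bun_H^e)}(\cF_{\s_H}^e,\cF_{\s_H}^e)\cong \wedge^*H^1(C,\frhc_{\s_H}),
\]
with the grading matched as stated.

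To compute this Ext algebra, I would pass to $\overline{\FF}_q$; the Weil structure does not affect dimensions. I would then invoke the geometric Langlands equivalence in positive characteristic of \cite{gaitsgory2025geometriclanglandspositivecharacteristic}, restricted to the restricted variant $\Loc_{\Hc}^{\mathrm{restr}}$. Under this equivalence, the Hecke eigensheaf $\cF_{\s_H}$ (summed over components) corresponds to the skyscraper at $\s_H$ tensored with the Whittaker normalization. Because $\s_H$ is strongly irreducible, $\s_H$ has automorphism group $Z(\Hc)$, and the formal neighborhood of $\s_H$ in $\Loc_{\Hc}$ is a quasi-smooth derived scheme modulo $Z(\Hc)$. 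Its tangent complex is $C^*(C,\frhc_{\s_H})[1]$; here $H^0=H^2=0$ by strong irreducibility and Poincaré duality (note $\frhc$ is semisimple, so $\frz(\Hc)=0$). The point is therefore l.c.i. and in fact smooth of dimension $\dim H^1$.

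The self-Ext of a skyscraper at a smooth point of dimension $n$ is $\wedge^* T$, placed in degrees $0,\dots,n$, and this gives $\wedge^* H^1(C,\frhc_{\s_H})$. The $Z(\Hc)$-gerbe structure splits the skyscraper into characters of $Z(\Hc)$. Via the equivalence, these characters correspond exactly to the components $e\in\pi_1(H)=X^*(Z(\Hc))$. The $e$-component therefore picks out one character summand, and each such summand has the same self-Ext $\wedge^*H^1$. This explains why the answer is independent of $e$.

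The main obstacle is making the spectral step rigorous and ensuring the eigensheaf really corresponds to the (character-twisted) skyscraper, rather than to something with multiplicity or a nontrivial shift. Uniqueness of an irreducible perverse eigensheaf on each component should follow from the $\s_H$-isotypic part of the automorphic category being equivalent to $\QCoh$ of the formal neighborhood.

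As a fallback, one could avoid the full equivalence as follows. The dimension over the point can be computed by first showing that $\Ext^*(\cF,\cF)$ is a module over $\cO$ of the formal completion that is supported at $\s_H$. One would then use the action of $\Sym(H^1[-1])$ via the excursion and derived Hecke actions, together with a Koszul-type argument, to identify it as a free exterior algebra. Finally, the degree bookkeeping (the sign $-i$) has to be checked against the convention that Verdier duality and $\Gamma_c$ produce the dual of $\Ext$, so that the cohomology is concentrated in degrees $-\dim H^1,\dots,0$.
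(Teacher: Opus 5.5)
Your proof is correct. Its spectral half is the same as the paper's: the eigensheaf corresponds, under \cite[Theorem\,0.1.4]{gaitsgory2025geometriclanglandspositivecharacteristic}, to a skyscraper at $\s_H$ in $\Loc_{\Hc}^{\res}$. Since $H^0=H^2=0$, the self-$\Hom$ of that skyscraper is $\wedge^*H^1(C,\frhc_{\s_H})$. You are more careful than the paper in matching the components $e\in\pi_1(H)$ with characters of $Z(\Hc)$.

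The routes differ in the first step, where $\int_{X,e}\cF_{\s}$ is related to $\Hom^*(\cF_{\s_H}|_{\Bun_H^e},\cF_{\s_H}|_{\Bun_H^e})$.

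\textbf{The paper's route.} It uses miraculous duality (\cite[Theorem\,0.4.5]{gaitsgory2017strange} and \cite[Corollary\,3.3.7]{arinkin2022dualityautomorphicsheavesnilpotent}) to replace $\Gamma_c(\cF\otimes\mathbb{D}^{\mathrm{Ver}}\cF)$ by $\Gamma(\cF\overset{!}{\otimes}\mathbb{D}^{\mathrm{Ver}}\cF)$, shifted by $2\dim\Bun_H=(2g-2)\dim H$. This gives $H^i\cong\Hom^{i+(2g-2)\dim H}$. It then needs the symmetry $\dim\wedge^{k}=\dim\wedge^{n-k}$, where $n=\dim H^1(C,\frhc_{\s_H})=(2g-2)\dim H$.

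\textbf{Your route.} You use only ordinary Verdier duality. The identity $\Gamma_c(\cA)^*\cong\Gamma(\mathbb{D}^{\mathrm{Ver}}\cA)$ holds on $\Bun_H^e$, by passing to the colimit over quasi-compact opens on which $\cF_{\s_H}$ is constructible. Combined with $\mathbb{D}^{\mathrm{Ver}}(\cF\otimes\mathbb{D}^{\mathrm{Ver}}\cF)\cong\underline{\Hom}(\cF,\cF)$, it gives $H^i\cong(\Hom^{-i})^*$. Finite-dimensionality then finishes the argument.

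\textbf{Comparison.} Your argument avoids miraculous duality and needs no symmetry of the exterior algebra. However, it only identifies $H^*$ with a linear dual, whereas the paper obtains a covariant identification with shifted $\Hom$. Taken together, the two identifications amount to the self-duality $\dim H^i=\dim H^{-i-(2g-2)\dim H}$.

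\textbf{One correction.} Your Verdier-duality identification involves no shift at all, so drop the hedge ``up to a shift and twist'' (only a Tate twist can occur). The absence of a shift is exactly what puts the cohomology in the range $[-(2g-2)\dim H,0]$; any nonzero shift there would give the wrong answer.
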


\begin{proof}
    Ignoring the Tate twist, we have \[\begin{split} & H^i(\int_{X,e}\cF_{\s}) \\ = & H_c^i(\Bun_H^e, \cF_{\s_H}\otimes\mathbb{D}^{\mathrm{Ver}}(\cF_{\s_H})) \\ \cong & H^{i+(2g-2)\dim H}(\Bun_H^e, \cF_{\s_H}\overset{!}{\otimes} {\mathbb{D}}^{\mathrm{Ver}}(\cF_{\s_H})) \\ \cong & \Hom^{i+(2g-2)\dim H}(\cF_{\s_H}|_{\Bun_H^e}, \cF_{\s_H}|_{\Bun_H^e}) \\ \cong & \wedge^{i+(2g-2)\dim H}H^1(C,\frhc_{\s_H}) \\ \cong & \wedge^{-i}H^1(C,\frhc_{\s_H})  \end{split}.\] Here, the second isomorphism follows from \cite[Theorem\,0.4.5]{gaitsgory2017strange} and \cite[Corollary\,3.3.7]{arinkin2022dualityautomorphicsheavesnilpotent}, the fourth isomorphism follows from the fact that $\cF_{\s_H}$ corresponds to a skyscraper sheaf on $\Loc_{\Hc}^{\res}(k)$ supported at $\s_H$ under the geometric Langlands correspondence and \cite[Theorem\,0.1.4]{gaitsgory2025geometriclanglandspositivecharacteristic}.
\end{proof}

\subsubsection{The canonical element}
For each $e\in \pi_1(H)$, there is a natural isomorphism \begin{equation}
    H^0(\int_{X,e}\cF_{\s})=H_c^0(\Bun_H^e, \cF_{\s_H}\otimes\mathbb{D}^{\mathrm{Ver}}(\cF_{\s_H}))\isom H^0_c(\Bun_H^e, \omega_{\Bun_H^e})\isom k
.\end{equation}  We use \begin{equation}\label{eq:canelement}
    \eta_{e,\s}\in H^0(\int_{X,e}\cF_{\s})\end{equation} to denote the element corresponding to $1\in k$ under the above isomorphism.

\subsection{\texorpdfstring{$L$}{L}-observables}
Now we introduce $L$-observables in the group case.
\subsubsection{Definition}\label{sec:lobsdef}
Following \cite[\S18]{BZSV}, we consider the algebra of $L$-observables \begin{equation}
    \mathbb{O}_{X,\s}=\End(\int_X\cF_{\s})\in \Alg(\Vect)
.\end{equation} On the level of cohomologies, we have \begin{equation}
    H^*\mathbb{O}_{X,\s}\cong \End(H^*(\int_X\cF_{\s})).
\end{equation} 

\subsubsection{Gradings by connected components}
Note that \begin{equation}\mathbb{O}_{X,\s}=\bigoplus_{(e,e')\in \pi_1(H)^2}\mathbb{O}_{X,(e,e'),\s}\end{equation} where $\mathbb{O}_{X,(e,e'),\s}=\Hom(\int_{X,e'}\cF_{\s}, \int_{X,e}\cF_{\s})$. We write \begin{equation}\mathbb{O}_{X,e,\s}=\mathbb{O}_{X,(e,e),\s}\end{equation} for $e\in \pi_1(H)$.

\subsubsection{\texorpdfstring{$L$}{L}-observables from special cohomological correspondences}
We study $L$-observables arising from special cohomological correspondences.

Given a cohomological correspondence
$\frc_{V^I}\in \Cor_{\Hk_{G,I},\Sat(V^I)}
(\cP_X\boxtimes\uk_{C^I},\cP_X\boxtimes\uk_{C^I})$,
there is an associated map
\begin{equation}
\begin{aligned}
\frc_{V^I,\s}
&\in \Hom^0\left(
V_{\s}^I\otimes\int_X\cF_{\s},
\uk_{C^I}\otimes\int_X\cF_{\s}
\right) \\
&\cong \Hom^0\left(
\Gamma(C^I,V_{\s}^I)\langle 2r\rangle\otimes\int_X\cF_{\s},
\int_X\cF_{\s}
\right) \\
&\cong \Hom^0\left(
\Gamma(C^I,V_{\s}^I)\langle 2r\rangle,
\mathbb{O}_{X,\s}
\right)
\end{aligned}
\end{equation}
as considered in \cite[\S6.2.1]{liu2025higherperiodintegralsderivatives}.

This construction is associative in the following sense: When $V^I=\boxtimes_{i=1}^r V_i$ for $V_i\in \Rep(\Gc)$, and $\frc_{V^I}=\frc_{V_1}\boxcirc \cdots \boxcirc \frc_{V_r}$ for $\frc_{V_i}\in \Cor_{\Hk_{G},\Sat(V_i)}(\cP_X\boxtimes\uk_C,\cP_X\boxtimes\uk_C)$, one has \begin{equation}\label{eq:Lobsassoc}
    \frc_{V^I,\s}=\rmm\circ (\frc_{V_1,\s}\otimes \cdots \otimes \frc_{V_r,\s})\in \Hom^0(\Gamma(C^I,V_{\s}^I)\langle 2r\rangle, \mathbb{O}_{X,\s})
\end{equation} where $\rmm:\mathbb{O}_{X,\s}^{\otimes r}\to \mathbb{O}_{X,\s}$ is the multiplication map.

Moreover, the construction is compatible with fusion in the following sense: Given $V^I\in \Rep(\Gc^I)$ and $\frc_{V^I}\in \Cor_{\Hk_{G,I},\Sat(V^I)}(\cP_X\boxtimes\uk_{C^I},\cP_X\boxtimes\uk_{C^I})$. Consider $\D^*_{\Gc}V^I\in \Rep(\Gc)$ and \[\frc_{\D^*_{\Gc}V^I}=\D_C^*\frc_{V^I}\in \Cor_{\Hk_{G},\Sat(\D^*_{\Gc}V^I)}(\cP_X\boxtimes\uk_C,\cP_X\boxtimes\uk_C)\] where $\D_{\Gc}:\Gc\to \Gc^I$ and $\D_C:C\to C^I$ are the diagonal maps. Then \begin{equation}\label{eq:Lobsfusion}
    \frc_{\D^*_{\Gc}V^I,\s}=\frc_{V^I,\s}\circ \D_{C,!}\in \Hom^0(\Gamma(C,\D^*_{C}V^I_{\s})\langle 2\rangle , \mathbb{O}_{X,\s})
\end{equation} where $\D_{C,!}:\Gamma(C,\D^*_{C}V_{\s}^I)\langle 2\rangle \to \Gamma(C^I,V_{\s}^I)\langle 2r\rangle$ is the Gysin homomorphism.

Combining \eqref{eq:Lobsassoc} and \eqref{eq:Lobsfusion}, note that $\D_C^*(\frc_{V_1}\boxcirc \cdots \boxcirc \frc_{V_r})=\frc_{V_1} \circ \cdots \circ \frc_{V_r}$, we have \begin{equation}\label{eq:Lobsassocfusion}
    (\frc_{V_1}\circ \cdots \circ \frc_{V_r})_{\s}=\rmm\circ (\frc_{V_1,\s}\otimes \cdots \otimes \frc_{V_r,\s}) \circ \D_{C,!}\in \Hom^0(\Gamma(C,\bigotimes_{i=1}^r V_{i,\s})\langle 2\rangle , \mathbb{O}_{X,\s}).
\end{equation}

\subsubsection{Examples of \texorpdfstring{$L$}{L}-observables}

For each $\lambda=(\lambda_H,\lambda_H)\in X_*(T)_+$, let $e_{\lambda}\in\pi_1(H)$ be the image of $\lambda_H$ under the natural map $X_*(T_H)\to \pi_1(H)$. From \eqref{eq:diagcorglob}, we obtain the diagonal $L$-observables \begin{equation}\label{eq:diagLobs}
    \frd_{\lambda,\s}:\Gamma(C,V_{\lambda,\s})\langle 2\rangle\to \bigoplus_{e\in\pi_1(H)}\mathbb{O}_{X,(e+e_{\lambda},e),\s}\sub  \mathbb{O}_{X,\s}
.\end{equation} From \eqref{eq:lefcorglob}, we obtain the Lefschetz $L$-observable \begin{equation}\label{eq:lefLobs}
    \frd_{\det,\lambda,\s}:\Gamma(C,V_{\lambda,\s})\to \bigoplus_{e\in\pi_1(H)}\mathbb{O}_{X,(e+e_{\lambda},e),\s} \sub \mathbb{O}_{X,\s}.
\end{equation}
From \eqref{eq:adjointcorglob}, we obtain the adjoint $L$-observable \begin{equation}\label{eq:adjointLobs}
    \fra\frd_{\s}:\Gamma(C,\frhc_{\s_H})\to \bigoplus_{e\in\pi_1(H)}\mathbb{O}_{X,e,\s} \sub \mathbb{O}_{X,\s}.
\end{equation}

\subsubsection{Translation \texorpdfstring{$L$}{L}-observables}
Now we introduce translation $L$-observables, which canonically identify the cohomologies of the geometric period integrals on different connected components of $\Bun_H$.
\begin{defn}\label{defn:translationLobs}
    For $e,e'\in \pi_1(H)$, an element $1_{e,e'}\in H^0\mathbb{O}_{X,(e,e'),\s}$ is called a translation $L$-observable (from $e'$ to $e$) if it satisfies the following conditions:
    \begin{enumerate}
        \item $1_{e,e'}(\eta_{e',\s})=\eta_{e,\s}$;
        \item $1_{e,e'}$ commutes with diagonal $L$-observables \eqref{eq:diagLobs}, that is, for each $\lambda=(\lambda_H,\lambda_H)\in X_*(T)_+$ such that $e_{\lambda}=0$, we have \[1_{e,e'}\circ H^*(\frd_{\lambda,\s})=H^*(\frd_{\lambda,\s})\circ 1_{e,e'} \in \Hom^0( H^{*+2}(C,V_{\lambda,\s})(1), H^*\mathbb{O}_{X,(e,e'),\s});\]
        \item When $e=e'$, we require that $1_{e,e'}\in H^0\mathbb{O}_{X,e,\s}$ is the unit.
    \end{enumerate}
\end{defn}

\begin{prop}\label{prop:trans}
    For every pair $e,e'\in \pi_1(H)$, the translation $L$-observable $1_{e,e'}\in H^0\mathbb{O}_{X,(e,e'),\s}$ exists and is unique. Moreover, it induces an isomorphism $1_{e,e'}:H^*(\int_{X,e'}\cF_{\s})\isom H^*(\int_{X,e}\cF_{\s})$.
\end{prop}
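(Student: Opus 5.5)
We construct $1_{e,e'}$ from diagonal $L$-observables and then show that conditions (1)--(2) pin it down. Throughout we use the identification of the proof of Proposition \ref{prop:dim}:
\[
H^*(\int_{X,e}\cF_{\s})\cong \Hom^{*+(2g-2)\dim H}(\cF_{\s_H}|_{\Bun_H^e},\cF_{\s_H}|_{\Bun_H^e}).
\]
By \cite{gaitsgory2025geometriclanglandspositivecharacteristic}, the right-hand side is the self-Ext of a skyscraper at $\s_H$, namely the exterior algebra $\wedge^\bullet H^1(C,\frhc_{\s_H})$ on each component. Under this identification $\eta_{e,\s}$ is the top-degree class, i.e.\ the class dual to the unit in degree $0$ after the shift.

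\emph{Construction.} Choose $\lambda_H$ with $e_{\lambda}=e-e'$. For instance, take $\lambda_H$ to be a (quasi-)minuscule or other dominant representative of the class $e-e'$.

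Because $\cF_{\s_H}$ is a Hecke eigensheaf, the Hecke functor $T_{\lambda_H}$ restricted from $\Bun_H^{e'}$ to $\Bun_H^{e}$ sends $\cF_{\s_H}$ to $V_{\lambda_H,\s_H}\otimes\cF_{\s_H}$. The diagonal correspondence $\frd_{\lambda}$ realizes this action on both factors of $\cF_{\s}=\cF_{\s_H}\boxtimes\mathbb{D}^{\mathrm{Ver}}\cF_{\s_H}$ simultaneously. Concretely, $\frd_{\lambda,\s}$ evaluated on a class in $\Gamma(C,V_{\lambda,\s})\langle 2\rangle$ acts on Ext algebras by conjugation: an endomorphism is pushed through $T_{\lambda_H}$, and the outer factors $V_{\lambda_H,\s_H}\otimes V_{\lambda_H,\s_H}^{*}$ are then contracted.

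Strong irreducibility gives $H^0(C,\frhc_{\s_H})=0$ and $H^0(C,\End(V_{\lambda_H,\s_H}))$ one-dimensional when $V_{\lambda_H,\s_H}$ is irreducible, which is guaranteed for minuscule $\lambda_H$. In general we replace this by a projection onto the trace part. We then define $1_{e,e'}$ as the diagonal $L$-observable evaluated at the class of $\id\in H^0(C,V_{\lambda_H,\s_H}\otimes V_{\lambda_H,\s_H}^*)$, placed in the appropriate cohomological degree, and normalized by a scalar so that (1) holds.

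The resulting map is the Ext-algebra isomorphism
\[
\Ext^\bullet(\cF|_e)\cong\Ext^\bullet(\cF|_{e'})
\]
induced by the equivalence that $T_{\lambda_H}$ gives on the component-wise eigen-categories, tensored with $V_{\lambda_H,\s_H}$. It is therefore an isomorphism, which yields the last claim of the proposition. For $e=e'$ we take $\lambda_H=0$, which gives the unit and hence (3). Commutation with $\frd_{\mu,\s}$ for $e_{\mu}=0$, i.e.\ condition (2), follows from associativity \eqref{eq:Lobsassocfusion} together with the commutativity of Hecke functors at distinct or fused points, since both composites come from Hecke operators on $\cF_{\s_H}$.

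\emph{Uniqueness.} Let $1,1'$ be two translation observables. Then $\phi=1'^{-1}\circ 1$ lies in $H^0\mathbb{O}_{X,e',\s}$, commutes with every $H^*(\frd_{\mu,\s})$ with $e_\mu=0$, and fixes $\eta_{e',\s}$. It therefore suffices to show that the subalgebra generated by these diagonal observables acts on $H^*(\int_{X,e'}\cF_{\s})$ so that its commutant in degree-$0$ endomorphisms is exactly $k$, and that $\eta$ is a cyclic vector.

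We expect this commutant computation to be the main obstacle. Our first attempt would be to take $\mu$ with $V_{\mu_H}\supset\frhc$ (the adjoint coweight). The component of $\frd_{\mu,\s}$ along $\frhc_{\s_H}$ should act on $\wedge^\bullet H^1(C,\frhc_{\s_H})$ by wedge and contraction operators, via the relation of $\frd$ with $\fra\frd$ from Proposition \ref{prop:globalcommutator}. This would produce the Clifford-type operators on $H^1\oplus H^1{}^*$, and the exterior algebra is an irreducible Clifford module, so the commutant is $k$ by Schur. Matching $\eta$ up to scalar then forces $\phi=1$.

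If controlling this Clifford action directly proves too delicate at this stage, the fallback is to argue on the spectral side. All these operators come from $\mathcal{O}$ of the formal neighborhood of $\s_H$ in $\Loc_{\Hc}$ acting on $\Ext^\bullet(\delta_{\s_H})$, which is again a Koszul/Clifford-type irreducible module.
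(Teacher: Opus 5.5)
Your existence construction is essentially the paper's. By Proposition~\ref{prop:diagh0}, $1_{e,e'}$ is the $(e,e')$-component of $H^0(\frd_{\lambda,\s})(x)$ for any $\lambda$ with $e_\lambda=e-e'$ and any $x\in H^2(C,V_{\lambda,\s})(1)$ of trace $1$. Note that you need a class in $H^2$, e.g.\ $\id$ cupped with a point class; the $H^0$ class $\id$ itself gives an observable of degree $-2$.

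\textbf{The gap is in uniqueness, and it carries over to the isomorphism claim.} The mechanism you propose cannot work. The diagonal observables $H^*(\frd_{\mu,\s})$ are mutually (graded-)commutative: they come from degree-$0$ local correspondences, whose commutators in $\PL_{X,\hbar}$ vanish, so \cite[Theorem\,4.47]{liu2025higherperiodintegralsderivatives} kills their global commutators (this is the argument in the proof of Proposition~\ref{prop:filtrationvialoc}(1)). Hence they supply only the ``wedge'' half of a Clifford algebra. The ``contraction'' half is $H^1(\fra\frd_{\s})$, which is not a diagonal observable. Proposition~\ref{prop:globalcommutator} only computes $[\fra\frd,\frd_\lambda]$; it does not produce contractions out of $\frd_\mu$. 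Since your $\phi$ is only known to commute with diagonal observables, no Schur argument for an irreducible Clifford module is available; indeed the diagonal subalgebra is commutative, so it lies in its own commutant.

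What is needed, and what the paper uses, is the cyclicity you list but never prove. This is Corollary~\ref{cor:AB}: for $e_\mu=0$, $\mu\neq 0$ and every $e$, acting on $\eta_{e,\s}$ by products of the restriction of $H^{-1}(\frd_{\mu,\s})$ to $H^1(C,\frhc_{\s_H}^*)(1)$ gives an isomorphism $\wedge^*(H^1(C,\frhc_{\s_H}^*)(1))\isom H^{-*}(\int_{X,e}\cF_{\s})$. Given this, conditions (1) and (2) force $1_{e,e'}(P\,\eta_{e',\s})=P\,\eta_{e,\s}$ for every such product $P$. That yields uniqueness directly, with no commutant computation and no need to invert an arbitrary translation observable as your $1'^{-1}$ does. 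It also shows $1_{e,e'}$ carries a basis to a basis, hence is an isomorphism.

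This cyclicity is the real content. Injectivity of $\wedge^*\to H^{-*}$ comes from the Clifford relation of Theorem~\ref{thm:clifford2} between $H^1(\fra\frd_{\s})$ and $H^{-1}(\frd_{\lambda,\s})$; surjectivity comes from the dimension count of Proposition~\ref{prop:dim}.

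Your existence step has related soft spots.
\begin{itemize}
\item Normalizing so that (1) holds presupposes $H^0(\frd_{\lambda,\s})(x)(\eta_{e',\s})\neq 0$. Your justification via ``the equivalence $T_{\lambda_H}$ gives on eigen-categories'' is heuristic: $T_{\lambda_H}$ is not an equivalence, and you have not shown that the diagonal correspondence matches the Hecke action under the duality isomorphism of Proposition~\ref{prop:dim}. The paper gets (1) from \cite[Lemma\,5.15]{wang2025specialcycleshtukascategorical}, and the isomorphism from Corollary~\ref{cor:AB} rather than from any equivalence.
\item Condition (2) does not follow from commutativity of Hecke functors in general; special correspondences need not commute, e.g.\ $[\fra\frd,\frd_\lambda]\neq 0$. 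It follows from the vanishing of the local commutator of degree-$0$ correspondences.
\end{itemize}
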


\begin{proof}
    The uniqueness is a consequence of Corollary \ref{cor:AB}. The existence of $1_{e,e}$ is obvious, and the general case follows from Proposition \ref{prop:diagh0}.
\end{proof}

Given Proposition \ref{prop:trans}, the span of translation $L$-observables $\{1_{e,e'}\}_{e,e'\in \pi_1(H)}$ is a subalgebra of $H^0\mathbb{O}_{X,\s}$ isomorphic to $\End(k[\pi_1(H)])$. Moreover, the translation $L$-observables canonically identify the algebras $H^*\mathbb{O}_{X,e,\s}$ for different $e\in \pi_1(H)$, and we define the reduced algebra of $L$-observables to be \begin{equation}\label{eq:reducedLobs}
    H^*\mathbb{O}_{X,\s}^{\red}:=H^*\mathbb{O}_{X,e,\s}
\end{equation} for any $e\in \pi_1(H)$. We have a canonical isomorphism of algebras \begin{equation}\label{eq:transiso}
     H^*\mathbb{O}_{X,\s}^{\red} \otimes \End(k[\pi_1(H)])\cong H^*\mathbb{O}_{X,\s}.
\end{equation}

For $e_0\in \pi_1(H)$, we define the translation by $e_0$ observable to be \begin{equation}\label{eq:shiftobs}
     T_{e_0}:=\sum_{e\in \pi_1(H)}1_{e+e_0,e} \in \End(k[\pi_1(H)])\sub  H^0\mathbb{O}_{X,\s}.
    \end{equation}

\subsubsection{Clifford relation}
Now we study the commutator between the diagonal $L$-observables \eqref{eq:diagLobs} and the adjoint $L$-observables \eqref{eq:adjointLobs}.

The commutator in the algebra $H^*\mathbb{O}_{X,\s}$ induces a map \begin{equation}\label{eq:commutatorLobs}
    [H^1(\fra\frd_{\s}),H^{-1}(\frd_{\lambda,\s})]:H^1(C,\frhc_{\s_H})\otimes H^1(C,V_{\lambda,\s})(1)\to H^0\mathbb{O}_{X,\s}. \end{equation}
The canonical element $\counit_{\frhc}: \frhc\otimes \frhc^*\to k$ induces a natural pairing via cup product \begin{equation}\label{eq:couniths}\counit_{H^1(C,\frhc_{\s_H})}: H^1(C,\frhc_{\s_H}) \otimes H^1(C,\frhc_{\s_H}^*)(1) \to H^2(C,\uk_C)(1) \cong k.\end{equation}

Consider the map \begin{equation}\label{eq:act*}
    \act_{V_{\lambda_H}}':V_{\lambda_H}\otimes V_{-w_0\lambda_H}\xrightarrow{\unit_{\frhc}\otimes\id\otimes\id} \frhc^*\otimes \frhc\otimes V_{\lambda_H}\otimes V_{-w_0\lambda_H}\xrightarrow{\id\otimes\act_{V_{\lambda_H}} \otimes \id} \frhc^*\otimes V_{\lambda_H}\otimes V_{-w_0\lambda_H}\xrightarrow{\id\otimes\counit_{V_{\lambda_H}}} \frhc^* 
.\end{equation} It induces a natural map \begin{equation}\label{eq:act*s} H^1(\act_{V_{\lambda_H},\s_H}'):H^{1}(C, V_{\lambda,\s})(1)\to H^1(C,\frhc_{\s_H}^*)(1).\end{equation}

\begin{thm}\label{thm:clifford2}
    The following identity holds \[\begin{split}
        [H^1(\fra\frd_{\s}), H^{-1}(\frd_{\lambda,\s})]=\counit_{H^1(C,\frhc_{\s_H})}\circ (\id\otimes H^1(\act_{V_{\lambda_H},\s_H}'))\cdot T_{e_{\lambda}} 
        \\ \in \Hom^0(H^1(C,\frhc_{\s_H})\otimes H^1(C,V_{\lambda,\s})(1), H^0\mathbb{O}_{X,\s})\end{split}\] 
        where $T_{e_{\lambda}}\in H^0\mathbb{O}_{X,\s}$ is defined in \eqref{eq:shiftobs}.
\end{thm}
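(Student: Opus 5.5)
The plan is to deduce Theorem \ref{thm:clifford2} from the global commutator relation, Proposition \ref{prop:globalcommutator}, by passing to $L$-observables and using \eqref{eq:Lobsassoc} and \eqref{eq:Lobsfusion}. I assume $g\neq 1$ first and return to $g=1$ at the end.

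\textbf{Step 1: pass to $L$-observables.} By \eqref{eq:Lobsassoc}, the left-hand side $[\fra\frd,\frd_{\lambda}]$ of Proposition \ref{prop:globalcommutator} induces the observable
\[
\rmm\circ(\fra\frd_{\s}\otimes \frd_{\lambda,\s})-\rmm\circ(\frd_{\lambda,\s}\otimes\fra\frd_{\s})\circ\sw.
\]
It is defined on $\Gamma(C^2,\frhc_{\s_H}\boxtimes V_{\lambda,\s})\langle 4\rangle\cong \Gamma(C,\frhc_{\s_H})\langle 2\rangle\otimes\Gamma(C,V_{\lambda,\s})\langle 2\rangle$. The swap $\sw$ of the two legs carries the Koszul sign. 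On $H^1\otimes H^1$ this sign turns the difference into the graded commutator $[H^1(\fra\frd_{\s}),H^{-1}(\frd_{\lambda,\s})]$. I will track the sign conventions of \cite[Conjecture\,4.45]{liu2025higherperiodintegralsderivatives} carefully here.

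\textbf{Step 2: identify the right-hand side.} By \eqref{eq:Lobsfusion}, the right-hand side $\D_{C,!}((\act_{V_{\lambda_H}}\boxtimes\id)^*\frd_{\lambda})$ induces the composition
\[
\Gamma(C,\frhc_{\s_H})\otimes\Gamma(C,V_{\lambda,\s})\xrightarrow{\ \cup\ }\Gamma(C,\frhc_{\s_H}\otimes V_{\lambda,\s})\xrightarrow{\ \act\ }\Gamma(C,V_{\lambda,\s})\xrightarrow{\ \frd_{\lambda,\s}\ }\mathbb{O}_{X,\s}.
\]
The first arrow is the restriction to the diagonal followed by the Gysin map, which on cohomology is cup product. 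In degrees $H^1\otimes H^1$ the cup product lands in $H^2(C,V_{\lambda,\s})$. So the problem reduces to computing the restriction of $H^*(\frd_{\lambda,\s})$ to $H^2(C,V_{\lambda,\s})(1)$, which maps into $H^0\mathbb{O}_{X,\s}$.

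\textbf{Step 3: the degree-$0$ part (main obstacle).} I need to show that $H^0(\frd_{\lambda,\s})$ on $H^2(C,V_{\lambda,\s})(1)$ factors through the coinvariants
\[
H^2(C,V_{\lambda,\s})(1)\twoheadrightarrow H^2(C,(V_{\lambda_H}\otimes V_{-w_0\lambda_H})_{\s})(1)\xrightarrow{\ \counit_{V_{\lambda_H}}\ }H^2(C,\uk_C)(1)\cong k,
\]
and that it sends the resulting class to $T_{e_\lambda}$. Since $\s_H$ is strongly irreducible, only the trivial summand of $V_{\lambda_H}\otimes V_{-w_0\lambda_H}$ contributes to $H^2$. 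On that summand, $\frd_\lambda$ restricted to the unit summand is the fundamental class of the Satake cycle, i.e.\ it is essentially the Hecke identity shifted to component $e+e_\lambda$. I would check two things:
\begin{itemize}
\item it maps $\eta_{e,\s}$ to $\eta_{e+e_\lambda,\s}$, using the identification with $H^0_c(\omega)$;
\item it commutes with the diagonal observables.
\end{itemize}
Together these characterize the translation observable (Definition \ref{defn:translationLobs}, Proposition \ref{prop:trans}; this is presumably Proposition \ref{prop:diagh0}). The normalization constant is the delicate part: it requires computing the degree of the Satake-cycle contribution and matching it against $\eta$. Composing the three arrows of Step 2 with this factorization then reproduces exactly the map $\act'_{V_{\lambda_H}}$ of \eqref{eq:act*} followed by the pairing $\counit_{H^1(C,\frhc_{\s_H})}$ of \eqref{eq:couniths}. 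This uses the adjunction between $\act$ and $\act'$ via $\unit_{\frhc}$.

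\textbf{Step 4: the case $g=1$.} Proposition \ref{prop:globalcommutator} is not available when $g=1$. In that case I would either argue by a separate deformation argument, or note that both sides are determined by the local computation of Proposition \ref{prop:localcommutator} once the Hecke eigen-property is used. I expect this case to need extra care, and I would first check whether the paper excludes it.
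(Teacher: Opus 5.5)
\textbf{There is a genuine gap in Step 3: it is circular.} Your Steps 1--2 are correct and amount to the paper's Lemma \ref{lem:cliffordprim}: the commutator equals $\frd_{\lambda,\s}\circ\Gamma(\act_{V_{\lambda_H},\s_H}\otimes\id)\circ\cup$, so everything hinges on $H^0(\frd_{\lambda,\s})$ restricted to $H^2(C,V_{\lambda,\s})(1)$.

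Your two bullets only determine $H^0(\frd_{\lambda,\s})(x)$ on the span of diagonal observables applied to the $\eta_{e,\s}$. The first bullet, that $H^0(\frd_{\lambda,\s})(x)$ sends $\eta_{e,\s}$ to $H^2(\counit_{V_{\lambda_H},\s_H})(x)\,\eta_{e+e_\lambda,\s}$, is \cite[Lemma\,5.15]{wang2025specialcycleshtukascategorical}. The second is that it commutes with diagonal observables.

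To conclude on all of $H^*(\int_X\cF_{\s})$ you need that this span is everything, i.e.\ the Atiyah--Bott statement of Corollary \ref{cor:AB}. That statement is exactly what underlies the uniqueness in Proposition \ref{prop:trans} and condition (3) in Proposition \ref{prop:diagh0}. In the paper, however, Corollary \ref{cor:AB} is deduced \emph{from} Theorem \ref{thm:clifford2}, via Corollary \ref{cor:clifford2split}. So invoking the characterization of translation observables assumes what you are proving. There is also no independent generation result to fall back on; in \cite{wang2025specialcycleshtukascategorical} the corresponding input was only available for $\GL_n$.

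Relatedly, your claim that only the trivial summand of $V_{\lambda_H}\otimes V_{-w_0\lambda_H}$ contributes to $H^2$ is false in general. $H^2(C,V_{\lambda,\s})(1)$ is dual to the geometric endomorphisms of $V_{\lambda_H,\s_H}$, and strong irreducibility does not force $V_{\lambda_H,\s_H}$ to be irreducible; take monodromy Zariski dense in a principal $\mathrm{SL}_2$. The theorem asserts that these extra classes act by zero, which is part of what must be proved, not something you can discard.

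\textbf{The missing idea is a bootstrap that proves the $H^0$-identity and the generation together.}
\begin{enumerate}
\item For $e_\lambda=0$, let $S_\lambda\subset H^*(\int_X\cF_{\s})$ be the subspace on which every $H^0(\frd_{\lambda,\s})(x)$ acts by the scalar $H^2(\counit_{V_{\lambda_H},\s_H})(x)$.
\item Your first bullet gives $\eta_{e,\s}\in S_\lambda$. Your second bullet gives that $S_\lambda$ is stable under diagonal observables. Your Steps 1--2 then give the Clifford relation restricted to $S_\lambda$.
\item Since $H^{1}(\int_{X,e}\cF_{\s})=0$ by Proposition \ref{prop:dim}, the degree-one adjoint observables kill $\eta_{e,\s}$. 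Hence on $\wedge^{\le m}\cdot\eta_{e,\s}\subset S_\lambda$ they act as contractions against the perfect pairing $\counit_{H^1(C,\frhc_{\s_H})}$.
\item This gives injectivity of $\wedge^*(H^1(C,\frhc^*_{\s_H})(1))\to H^{-*}(\int_{X,e}\cF_{\s})$, and the dimension count of Proposition \ref{prop:dim} gives surjectivity. Hence $S_\lambda$ is everything, and the theorem follows.
\end{enumerate}

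Finally, $g=1$ needs no deformation or separate local argument. Strong irreducibility kills $H^0$ and $H^2$ of $\frhc_{\s_H}$, and its Euler characteristic is $0$. So $H^*(C,\frhc_{\s_H})=0$ and both sides vanish trivially.
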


We will prove Theorem \ref{thm:clifford2} in \S\ref{sec:relproof}.

When $\lambda\neq 0$, we choose a splitting $H^1(C,V_{\lambda,\s})(1)=H^1(C,\frhc_{\s_H}^*)(1)\oplus K$ for the quotient map \[H^1(\act_{V_{\lambda_H},\s_H}'):H^1(C,V_{\lambda,\s})(1)\to H^1(C,\frhc_{\s_H}^*)(1).\] We obtain the following immediate corollary of Theorem \ref{thm:clifford2}.

\begin{cor}\label{cor:clifford2split}
For $\lambda\neq 0$, we have \[\begin{split}[H^1(\fra\frd_{\s}), H^{-1}(\frd_{\lambda,\s})|_{H^1(C,\frhc_{\s_H}^*)(1)}]=\counit_{H^1(C,\frhc_{\s_H})}\cdot T_{e_{\lambda}}  \\ \in \Hom^0( H^1(C,\frhc_{\s_H})\otimes H^1(C,\frhc_{\s_H}^*)(1), H^0\mathbb{O}_{X,\s})\end{split}.\] 
\end{cor}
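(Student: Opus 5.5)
The plan is to deduce the corollary formally from Theorem \ref{thm:clifford2} by restricting both sides along the chosen splitting. Write $\iota: H^1(C,\frhc_{\s_H}^*)(1)\hookrightarrow H^1(C,V_{\lambda,\s})(1)$ for the inclusion of the first summand. The splitting is chosen so that $H^1(\act_{V_{\lambda_H},\s_H}')\circ\iota=\id$; this is what a section of a quotient map means here. Precomposing the identity of Theorem \ref{thm:clifford2} with $\id\otimes\iota$ will give the claim.

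For the left-hand side, the commutator $[H^1(\fra\frd_{\s}),H^{-1}(\frd_{\lambda,\s})]$ is bilinear in its two arguments. Its restriction along $\id\otimes\iota$ is therefore $[H^1(\fra\frd_{\s}),H^{-1}(\frd_{\lambda,\s})|_{H^1(C,\frhc_{\s_H}^*)(1)}]$, by definition of the restricted observable. For the right-hand side, we get
\[
\counit_{H^1(C,\frhc_{\s_H})}\circ(\id\otimes H^1(\act_{V_{\lambda_H},\s_H}'))\circ(\id\otimes\iota)\cdot T_{e_{\lambda}}=\counit_{H^1(C,\frhc_{\s_H})}\cdot T_{e_{\lambda}},
\]
using $H^1(\act_{V_{\lambda_H},\s_H}')\circ\iota=\id$. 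The factor $T_{e_\lambda}$ is a fixed element of $H^0\mathbb{O}_{X,\s}$ and is untouched by the restriction.

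The only point needing justification is that a splitting of the stated form exists, i.e.\ that $H^1(\act_{V_{\lambda_H},\s_H}')$ is surjective when $\lambda\neq0$. I would argue this as follows. The map $\act_{V_{\lambda_H}}'$ of \eqref{eq:act*} is the $\Hc$-equivariant map dual to $\frhc\to V_{\lambda_H}\otimes V_{-w_0\lambda_H}\cong\End(V_{\lambda_H})$ given by the action. For $\lambda_H\neq0$ this map is injective, since $\frhc$ is simple (as $H$ is almost simple) and acts nontrivially on $V_{\lambda_H}$. Hence $\act_{V_{\lambda_H}}'$ is a split surjection of $\Hc$-representations by complete reducibility. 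Applying this to $\s_H$, the local system $\frhc^*_{\s_H}$ becomes a direct summand of $V_{\lambda,\s}$, and $H^1$ of the projection is surjective with a canonical section. One can take $K$ to be $H^1$ of the complementary summand, although any splitting works.

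I expect no real obstacle, since everything is formal once the splitting exists. The only care needed is with signs and super-grading in restricting a commutator. Because the identity is linear in the second slot and $\iota$ is an even map of degree zero, no Koszul sign arises.
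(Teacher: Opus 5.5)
Your proposal is correct and matches the paper, which presents this as an immediate corollary of Theorem \ref{thm:clifford2}, obtained by restricting both sides along the chosen splitting of $H^1(\act_{V_{\lambda_H},\s_H}')$. Your extra check that this map is surjective for $\lambda\neq 0$ (using simplicity of $\frhc$ and complete reducibility) supplies a point the paper takes for granted when it calls the map a quotient map.
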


\subsubsection{Atiyah--Bott formula}
We also have the following corollary.
\begin{cor}\label{cor:AB}
    When $e_{\lambda}=0$ and $\lambda\neq 0$, the map $H^{-1}(\frd_{\lambda,\s})|_{H^1(C,\frhc^*_{\s_H})(1)}$ induces an isomorphism of vector spaces \[\wedge^*H^{-1}(\frd_{\lambda,\s})|_{H^1(C,\frhc^*_{\s_H})(1)}\cdot \eta_{e,\s}: \wedge^{*}(H^1(C,\frhc_{\s_H}^*)(1))\isom H^{-*}(\int_{X,e}\cF_{\s})\] by taking the action on $\eta_{e,\s}\in H^0(\int_{X,e}\cF_{\s})$ for each $e\in \pi_1(H)$.
\end{cor}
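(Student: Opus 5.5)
We will deduce Corollary \ref{cor:AB} from the Clifford relation of Corollary \ref{cor:clifford2split} together with the dimension count of Proposition \ref{prop:dim}. Set $W:=H^1(C,\frhc_{\s_H})$ and $W^\vee:=H^1(C,\frhc^*_{\s_H})(1)$; the cup-product pairing $\counit_{H^1(C,\frhc_{\s_H})}$ of \eqref{eq:couniths} is perfect by Poincaré duality. Since $e_{\lambda}=0$, we have $T_{e_\lambda}=T_0=\sum_e 1_{e,e}$, which is the unit of $H^0\mathbb{O}_{X,\s}$ by condition (3) of Definition \ref{defn:translationLobs}; this is the only place where we use the existence of $1_{e,e}$, which is obvious. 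Moreover, when $e_\lambda=0$ the diagonal observable $\frd_{\lambda,\s}$ preserves each component $\int_{X,e}\cF_{\s}$. We may therefore work on a fixed component $M:=H^*(\int_{X,e}\cF_{\s})$. We write $\psi:=H^{-1}(\frd_{\lambda,\s})|_{W^\vee}$, which lowers degree by one, and $\phi:=H^1(\fra\frd_{\s})|_W$, which raises degree by one. Corollary \ref{cor:clifford2split} then reads $[\phi(a),\psi(b)]=\langle a,b\rangle\cdot\mathrm{id}_M$, where the bracket is the super commutator, since both operators are odd.

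Next we need the relations within each family. The odd elements $\psi(b)$ must anticommute among themselves, and likewise the $\phi(a)$, so that $\phi$ and $\psi$ together generate an action of the Clifford algebra $\mathrm{Cl}(W\oplus W^\vee)$ on $M$. The most economical route avoids checking these relations. We consider the map $T^*(W^\vee)\to M$, $b_1\otimes\cdots\otimes b_n\mapsto \psi(b_1)\cdots\psi(b_n)\eta_{e,\s}$, and show directly that it factors through $\wedge^*W^\vee$ and that the factored map is injective. For injectivity, pick a basis $a_i$ of $W$ and a dual basis $b_i$ of $W^\vee$. We have $\phi(a)\eta_{e,\s}\in H^1(M)=0$, because by Proposition \ref{prop:dim} the space $M$ is concentrated in degrees $\le 0$. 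Repeatedly using $[\phi(a_i),\psi(b_j)]=\delta_{ij}$, we then compute $\phi(a_{i_n})\cdots\phi(a_{i_1})\psi(b_{j_1})\cdots\psi(b_{j_n})\eta$ as a signed determinant of pairings times $\eta$. This shows that the images of the wedge monomials $\psi(b_{j_1})\cdots\psi(b_{j_n})\eta$, for $j_1<\dots<j_n$, are linearly independent, because the pairing matrix with the $\phi$-monomials is triangular with nonzero diagonal. Here $\eta_{e,\s}\neq0$, as it corresponds to $1$ under an isomorphism.

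For the antisymmetry, the quickest argument is also a pairing computation. We show that the element $(\psi(b)\psi(b')+\psi(b')\psi(b))\cdots\eta$ is annihilated by all $\phi$-monomials of complementary length. Combined with the duality of Proposition \ref{prop:dim} (each graded piece of $M$ has dimension $\dim\wedge^{-i}W$), this should force it to vanish. Granting that the map $\wedge^*W^\vee\to M$ is well defined, injectivity and the equality of graded dimensions from Proposition \ref{prop:dim} give the isomorphism in each degree.

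The main obstacle is this well-definedness: the $\psi(b)$ must square to zero on the relevant vectors, and it is not clear a priori that the $\phi$-monomials separate $M$. We would first try to prove $[\psi(b),\psi(b')]=0$ directly, by interpreting $\frd_\lambda\circ\frd_\lambda$ through the fusion relation \eqref{eq:Lobsassocfusion}. This exhibits the product as an observable factoring through $\Gamma(C^2,\cdot)$, which is graded-symmetric under the swap of legs, and its restriction to $H^1\otimes H^1$ is then antisymmetric. If that route proves delicate, we would instead use the perfect pairing of the top degree $\wedge^{\top}$ with degree $0$ to argue nondegeneracy of the $\phi$-action on $M$.
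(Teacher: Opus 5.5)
There is a gap. Your overall strategy matches the paper's: you prove independence of the ordered monomials $\psi(b_{j_1})\cdots\psi(b_{j_n})\eta_{e,\s}$ from the cross relation of Corollary~\ref{cor:clifford2split} and $\phi(a)\eta_{e,\s}=0$, and you get surjectivity from Proposition~\ref{prop:dim}. But you leave the factorization through $\wedge^*W^\vee$ unproven, and the route you would try first does not work as stated.

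The relevant identity for $\psi(b)\psi(b')$ is \eqref{eq:Lobsassoc}, not fusion. Symmetry of $\frd_\lambda\boxcirc\frd_\lambda$ under swapping the two legs is not a formal property. The commutator of two global special correspondences is in general a nonzero term $\D_{C,!}(\cdots)$ supported on the diagonal, and it does contribute on $H^1\otimes H^1$ through the cup product. Proposition~\ref{prop:globalcommutator} is exactly such a term for the pair $(\fra\frd,\frd_\lambda)$. So your heuristic would equally ``prove'' that $\fra\frd_\s$ and $\frd_{\lambda,\s}$ supercommute, contradicting Theorem~\ref{thm:clifford2}. The missing input is local: degree-zero elements of $\PL_{X,\hbar}$ commute, and \cite[Theorem\,4.47]{liu2025higherperiodintegralsderivatives} upgrades this to $[\frd_\lambda,\frd_\lambda]=0$. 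This needs $g\neq 1$; for $g=1$ one has $W=0$ and the statement is trivial. This is the paper's ``more direct'' justification, given in the proof of Proposition~\ref{prop:filtrationvialoc}(1).

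Alternatively, your pairing route already closes with what you have, and this is the paper's other justification (``by Corollary~\ref{cor:clifford2split}''). Your independence computation involves only ordered monomials and never uses well-definedness. Combined with Proposition~\ref{prop:dim}, it shows that the ordered monomials $\psi_J\eta_{e,\s}$ of length $n$ form a basis of $M^{-n}$, with $\phi_I\psi_J\eta_{e,\s}=\pm\delta_{IJ}\eta_{e,\s}$ for ordered $\phi$-monomials $\phi_I$ of length $n$. So the $\phi_I$ do separate $M^{-n}$, which is the point you doubted.

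Next, $A:=\psi(b)\psi(b')+\psi(b')\psi(b)$ commutes with every $\phi(a)$, using only the cross relations. Hence for $y\in M^{-(n-2)}$ you get $\phi_IAy=A\phi_Iy=0$, since $\phi_Iy\in M^{2}=0$. By separation $Ay=0$, so $A=0$ on all of $M$. Therefore the map from $T^*(W^\vee)$ kills the ideal defining $\wedge^*W^\vee$, and the induced map is well defined.
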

\begin{proof}
    Consider the induced map on the tensor algebra \[H^{-1}(\frd_{\lambda,\s})|_{H^1(C,\frhc^*_{\s_H})(1)}^{\otimes *}\cdot \eta_{e,\s}: (H^1(C,\frhc_{\s_H}^*)(1))^{\otimes *}\to H^{-*}(\int_{X,e}\cF_{\s}).\] It factors through the natural quotient map $(H^1(C,\frhc_{\s_H}^*)(1))^{\otimes *}\to \wedge^*(H^1(C,\frhc_{\s_H}^*)(1))$ by Corollary \ref{cor:clifford2split} (or more directly, follows from the fact that diagonal $L$-observables are mutually commutative which is explained in the proof of Proposition \ref{prop:filtrationvialoc}(1)). Therefore, we obtain the map $\wedge^*H^{-1}(\frd_{\lambda,\s})|_{H^1(C,\frhc^*_{\s_H})(1)}\cdot \eta_{e,\s}: \wedge^{*}(H^1(C,\frhc_{\s_H}^*)(1))\to H^{-*}(\int_{X,e}\cF_{\s})$. The injectivity of this map follows from the $e_{\lambda}=0$ case of Corollary \ref{cor:clifford2split} (see the proof of \cite[Lemma\,6.16]{wang2025specialcycleshtukascategorical}). The surjectivity follows from a dimension comparison using Proposition \ref{prop:dim}.
\end{proof}

We also note the following consequence of Corollary \ref{cor:AB} for later use.
\begin{cor}\label{cor:duality}
    For each $e, e'\in \pi_1(H)$, there exists an isomorphism of graded vector spaces \[(H^*(\int_{X,e}\cF_{\s}))^*\cong H^{*-(2g-2)\dim H}(\int_{X,e'}\cF_{\s})(-(g-1)\dim H) .\]
\end{cor}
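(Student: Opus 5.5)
Corollary \ref{cor:AB} identifies each graded piece $H^*(\int_{X,e}\cF_{\s})$ with an exterior algebra, so the plan is to compare the two sides through this description. Set $W:=H^1(C,\frhc_{\s_H}^*)(1)$ and $n:=\dim W$. First compute $n$. Since $\s_H$ is geometrically strongly irreducible, $H^0(C_{\overline{\FF_q}},\frhc_{\s_H})=0$, and by Poincaré duality (using $\frhc\cong\frhc^*$ via $\kappa_{\min}$) also $H^2=0$. The Euler characteristic then gives $n=(2g-2)\dim\frhc=(2g-2)\dim H$. This matches Proposition \ref{prop:dim}: there the top degree is $i=-n$ and the bottom degree is $0$.

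Next, fix $e,e'\in\pi_1(H)$. By Corollary \ref{cor:AB}, for a fixed $\lambda\neq 0$ with $e_\lambda=0$, we have
\[
H^{-j}(\int_{X,e}\cF_{\s})\cong\wedge^jW .
\]
The isomorphism is given by applying products of $H^{-1}(\frd_{\lambda,\s})|_W$ to $\eta_{e,\s}$, and likewise for $e'$ with $\eta_{e',\s}$. The perfect pairing
\[
\wedge^jW\otimes\wedge^{n-j}W\to\wedge^nW
\]
identifies $(\wedge^jW)^*$ with $\wedge^{n-j}W\otimes(\wedge^nW)^*$. Composing these gives
\[
(H^{-j}(\int_{X,e}\cF_{\s}))^*\cong H^{-(n-j)}(\int_{X,e'}\cF_{\s})\otimes(\wedge^nW)^* .
\]
In the statement's indexing, degree $-j$ on the left corresponds to degree $-j-n=-j-(2g-2)\dim H$ on the right, which is exactly the claimed shift.

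It remains to identify the twist, and this is the only step needing care. The one-dimensional space $\wedge^nW$ carries a Frobenius weight, and we must check it equals the Tate twist $(-(g-1)\dim H)$ up to the normalization in the definition of $\int_X$. Write $W=H^1(C,\frhc^*_{\s_H})(1)$. Since $\frhc$ is self-dual, the pairing \eqref{eq:couniths} gives $W^*\cong H^1(C,\frhc_{\s_H})$ and pairs $W$ with itself into $k$. On the graded level, $\wedge^n$ of a symplectic or orthogonal space is canonically trivial up to sign. The twist therefore comes only from the normalization $(1)$ in $W$ together with the half-twists $\langle n\rangle$ used in the definition of $\frd_{\lambda,\s}$. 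Tracking these, $\wedge^nW$ contributes weight $n/2=(g-1)\dim H$, which gives the asserted twist.

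If this weight bookkeeping turns out to be messy, there is a fallback, since the corollary only asserts an isomorphism of graded vector spaces with the stated twist. Proposition \ref{prop:dim}'s argument computes
\[
H^{i}(\int_{X,e}\cF_{\s})\cong \wedge^{i+(2g-2)\dim H}H^1(C,\frhc_{\s_H}) .
\]
This already displays the Verdier-duality shift $(2g-2)\dim H$ together with its Tate twist $(g-1)\dim H$, coming from $\dim\Bun_H=(g-1)\dim H$. Combining it with Corollary \ref{cor:AB} produces the same isomorphism, and here the twist can be read off directly from $\omega_{\Bun_H}$.
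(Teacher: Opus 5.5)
Your strategy is the intended one. The paper gives no separate argument and states the corollary as a consequence of Corollary \ref{cor:AB}: identify both sides with $\wedge^*W$ and use the pairing $\wedge^jW\otimes\wedge^{n-j}W\to\wedge^nW$. Your computation $n=(2g-2)\dim H$ and the degree bookkeeping are fine. However, as bare graded vector spaces the statement already follows from Proposition \ref{prop:dim}, since $\binom{n}{j}=\binom{n}{n-j}$. The real content is a Frobenius-equivariant isomorphism with the exact twist; that is what produces the factor $q^{-(g-1)\dim H}$ in Lemma \ref{lem:intnumber=trace}. So the step you flag is the whole point, and your argument for it has a gap.

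\textbf{What goes wrong in the twist step.} You need $\wedge^nW\cong k((g-1)\dim H)$ as a Frobenius module.
\begin{itemize}
\item The self-pairing of $W=H^1(C,\frhc^*_{\s_H})(1)$ induced by $\kappa_{\min}$ and cup product lands in $H^2(C,\uk)(2)\cong k(1)$, not in $k$. Were it $k$-valued, your own reasoning would make $\wedge^nW$ trivial and there would be no twist at all.
\item Appealing to ``half-twists in the definition of $\frd_{\lambda,\s}$'' cannot help. Those normalizations are already built into the Frobenius-equivariant identification $\wedge^*W\cong H^{-*}(\int_{X,e}\cF_{\s})$.
\item Weight considerations only determine $|\det(\Frob\mid W)|$, not the eigenvalue itself.
\item ``Trivial up to sign'' is not enough. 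A sign would twist $\wedge^nW$ by a nontrivial character, and no Frobenius-equivariant isomorphism of the stated form would exist.
\end{itemize}

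\textbf{The missing idea.} Since $\kappa_{\min}$ is symmetric and cup product on $H^1$ is anticommutative, the pairing $W\otimes W\to k(1)$ is alternating. It is nondegenerate by Poincaré duality. Hence Frobenius acts on $W$ by a symplectic similitude whose multiplier is the character of $k(1)$. Because $\det g=\mu(g)^{n/2}$ on $\mathrm{GSp}_n$, you get $\wedge^nW\cong k(1)^{\otimes n/2}=k((g-1)\dim H)$ canonically. It is the symplectic, rather than orthogonal, nature of the form that rules out the sign. Combined with your exterior-algebra pairing, this gives $(H^{-j}(\int_{X,e}\cF_{\s}))^*\cong H^{-(n-j)}(\int_{X,e'}\cF_{\s})(-(g-1)\dim H)$.

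You should also check that the identification of Corollary \ref{cor:AB} is Frobenius-equivariant. First, $\eta_{e,\s}$ is Frobenius-fixed. Second, by Proposition \ref{prop:diagh-1}, the restriction to your chosen splitting agrees with the canonical map $\overline{H^{-1}(\frd_{\lambda,\s})}$, so the splitting need not be Frobenius-stable.

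Your fallback does not avoid this issue. The paper proves Proposition \ref{prop:dim} explicitly ignoring Tate twists. To use it here you would need Frobenius-equivariant versions of two steps: the comparison between compactly supported cohomology and $!$-tensor cohomology, and the identification $\Hom^*(\cF_{\s_H},\cF_{\s_H})\cong\wedge^*H^1(C,\frhc_{\s_H})$. Neither of these can simply be read off from $\omega_{\Bun_H}$.
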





\subsubsection{Diagonal \texorpdfstring{$L$}{L}-observables}
Now we study further the diagonal $L$-observables defined in \eqref{eq:diagLobs} for each $\lambda=(\lambda_H,\lambda_H)\in X_*(T)_+$.

We first study $H^{-1}(\frd_{\lambda,\s})$.
\begin{prop}\label{prop:diagh-1}
    For $\lambda\neq 0$, the map \[H^{-1}(\frd_{\lambda,\s}):H^1(C,V_{\lambda,\s})(1)\to H^{-1}\mathbb{O}_{X,\s}\] factors through $H^1(\act_{V_{\lambda_H},\s_H}')$ defined in \eqref{eq:act*s}. That is, there exists a unique map \begin{equation}\label{eq:diagLobsq}
    \overline{H^{-1}(\frd_{\lambda,\s})}:H^1(C,\frhc_{\s_H}^*)(1)\to H^{-1}\mathbb{O}_{X,\s}
\end{equation} such that $H^{-1}(\frd_{\lambda,\s})=\overline{H^{-1}(\frd_{\lambda,\s})}\circ H^1(\act_{V_{\lambda_H},\s_H}')$.
\end{prop}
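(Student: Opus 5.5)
The plan is to compute $H^{-1}(\frd_{\lambda,\s})$ on the spectral side and show that it sees $V_{\lambda_H}\otimes V_{-w_0\lambda_H}$ only through its ``moment map'' component $\act'_{V_{\lambda_H}}:V_{\lambda_H}\otimes V_{-w_0\lambda_H}\to\frhc^*$ of \eqref{eq:act*}.

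\textbf{Step 1: the local correspondence.} By Theorem \ref{thm:derivedsatake}, in the group case we have $\PL_X\cong\cO((\frhc^*)^{\shear})$, with $\frhc$ placed in cohomological degree $2$. A degree-$0$ map $V_\lambda\to\PL_X$ of $\Hc$-representations (for the diagonal $\Hc$) can therefore only land in the degree-$0$ part $k$. Hence $\frd_\lambda^{\loc}$ mod $\hbar$ is the counit $\counit_{V_{\lambda_H}}$ followed by $k\to\PL_X$. So the local correspondence itself has no component through $\frhc^*$. Any $H^{-1}$ contribution must instead come from the global geometry, namely the cohomology of $C$ interacting with the derived structure.

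\textbf{Step 2: translate the $L$-observable to the spectral side.} Here I would use the computation already implicit in the proof of Proposition \ref{prop:dim}. Since $\cF_{\s_H}$ corresponds to the skyscraper at $\s_H$ (\cite[Theorem\,0.1.4]{gaitsgory2025geometriclanglandspositivecharacteristic}), we have
\[
\int_{X,e}\cF_\s\simeq \End(\cF_{\s_H}|_{\Bun_H^e})[\text{shift}]\simeq \cO\bigl(\Omega_{\s_H}\Loc_{\Hc}\bigr)[\text{shift}],
\]
whose cohomology is $\wedge^*H^1(C,\frhc_{\s_H})$. Under this identification, the operator $\frd_{\lambda,\s}$ is the composite of two pieces:
\begin{itemize}
\item the Hecke action on both factors, $\cF_{\s_H}\mapsto V_{\lambda_H,\s_H}\otimes\cF_{\s_H}$ and its dual;
\item the pairing $\counit_{V_{\lambda_H}}$, which is the only possible piece by Step 1.
\end{itemize}
The resulting map $\Gamma(C,V_{\lambda,\s})\langle2\rangle\otimes\End(\cF)\to\End(\cF)$ is then computed by the spectral Hecke action of the tautological bundle $\mathcal{V}_{\lambda_H}\boxtimes\mathcal{V}_{\lambda_H}^*$ on $\Loc_{\Hc}\times C$, restricted to the (derived) point $\s_H$.

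\textbf{Step 3: identify the degree $-1$ part.} The part of cohomological degree $-1$ pairs a class in $H^1(C,V_{\lambda,\s})$ with the first-order (tangent) direction of the derived fiber. By construction this pairing goes through the Atiyah class of the universal bundle $\mathcal{V}_{\lambda_H}$. Concretely, the derivative of the tautological section $\counit_{V_{\lambda_H}}$ along a deformation $\xi\in\frhc_{\s_H}$ equals $\counit_{V_{\lambda_H}}(\xi v\otimes v^*)$. Dualizing, the relevant map from $V_{\lambda_H}\otimes V_{-w_0\lambda_H}$ to the cotangent directions $\frhc^*$ is exactly $\act'_{V_{\lambda_H}}$. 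This is the same mechanism as in Proposition \ref{prop:localcommutator}, where the $\hbar$-term is $\counit(Xv\otimes v^*)$. After taking $H^1(C,-)$, $H^{-1}(\frd_{\lambda,\s})$ factors through $H^1(\act'_{V_{\lambda_H},\s_H})$.

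\textbf{Step 4: surjectivity and uniqueness.} Uniqueness of $\overline{H^{-1}(\frd_{\lambda,\s})}$ requires $H^1(\act'_{V_{\lambda_H},\s_H})$ to be surjective. For $\lambda\neq0$, the map $\act'$ is a surjection of $\Hc$-representations: its restriction to the summand $\frhc^*\subset V_\lambda\otimes V_\lambda^*$ is a nonzero multiple of the identity, and this summand is present precisely because $\lambda\neq0$ makes $V_\lambda$ a faithful representation of $\frhc$, since $\frhc$ is simple. It is therefore split surjective. Since $H^2(C,-)$ vanishes on the kernel twisted by $\s$ (strong irreducibility gives no coinvariants for the relevant summands, up to the trivial summand, which is not in the kernel), the induced map on $H^1$ is surjective, and uniqueness follows.

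The main obstacle is Step 3. It requires making rigorous that the $H^{-1}$ component of the globalized observable is governed by the first-order (Atiyah class) term of the local-to-global map $\glob$. I would attempt it by deforming along $\hbar$, using the explicit bimodule description in Example \ref{eg:central}, and by comparing with the commutator formula of Proposition \ref{prop:globalcommutator}, where the same $\counit(Xv\otimes v^*)$ term appears.
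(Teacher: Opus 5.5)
You have a genuine gap in Steps 2--3, and those two steps carry the whole content of the proposition. Steps 1 and 4 are fine. In Step 4, $\act'_{V_{\lambda_H}}$ is a split surjection of $\Hc$-representations once $\lambda_H\neq 0$, so $H^1(\act'_{V_{\lambda_H},\s_H})$ is split surjective and uniqueness is immediate; the remark about $H^2$ and coinvariants is not needed.

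\textbf{Step 2.} You assert that, under $\int_{X,e}\cF_{\s}\simeq\End(\cF_{\s_H}|_{\Bun_H^e})[\text{shift}]$ and the geometric Langlands equivalence, the observable $\frd_{\lambda,\s}$ becomes ``tensor with $\mathcal{V}_{\lambda_H}\boxtimes\mathcal{V}_{\lambda_H}^*$, then apply $\counit_{V_{\lambda_H}}$'' at the derived point $\s_H$. Here $\frd_{\lambda,\s}$ is the observable built from $\D^{\Hk}_![\Hk_{H,\leq\lambda_H}/\Bun_H\times C]$. Nothing available gives you this. In Proposition \ref{prop:dim} the equivalence is used only to compute the graded dimension of an object. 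You would need a compatibility at the level of morphisms, matching all of the following with a spectral operation:
\begin{itemize}
\item the miraculous-duality identification of $\Gamma_c(\cF_{\s_H}\otimes\mathbb{D}^{\mathrm{Ver}}(\cF_{\s_H}))$ with $\End(\cF_{\s_H})$;
\item the Hecke eigen-structures on $\cF_{\s_H}$ and on $\mathbb{D}^{\mathrm{Ver}}(\cF_{\s_H})$;
\item the specific fundamental class that defines $\frd_{\lambda}$.
\end{itemize}

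\textbf{Step 3.} You then say that ``by construction'' the degree $-1$ part is the Atiyah-class term $\act'_{V_{\lambda_H}}$. That is exactly the statement to be proved. Step 1 gives no leverage on it: knowing that $\frd_{\lambda}^{\loc}$ is the counit says nothing about how a degree $-1$ component arises after globalization. As written this is a heuristic, not a proof.

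\textbf{The missing idea.} You never need to compute $H^{-1}(\frd_{\lambda,\s})$. You only need to show it vanishes on $K=\ker H^1(\act'_{V_{\lambda_H},\s_H})$, and a centralizer argument does this.
\begin{enumerate}
\item The right-hand side of Theorem \ref{thm:clifford2} vanishes on $K$. Hence for $x\in K$, $H^{-1}(\frd_{\lambda,\s})(x)$ supercommutes with every adjoint observable $H^1(\fra\frd_{\s})(y)$. Theorem \ref{thm:clifford2} needs no spectral description of $\frd_{\lambda,\s}$: it comes from the global commutator of Proposition \ref{prop:globalcommutator} (i.e.\ the local Plancherel computation of Proposition \ref{prop:localcommutator}) plus the dimension count of Proposition \ref{prop:dim}.
\item By Corollaries \ref{cor:AB} and \ref{cor:clifford2split}, $H^*(\int_{X,e}\cF_{\s})\cong\wedge^*W$ with $W=H^1(C,\frhc^*_{\s_H})(1)$. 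The adjoint observables act on it as all contractions by $W^*\cong H^1(C,\frhc_{\s_H})$.
\item In $\End(\wedge^*W)$, the supercentralizer of all contractions is the algebra they generate. On all of $H^*\mathbb{O}_{X,\s}$ one adds the translation observables, which have degree $0$. This algebra lives in cohomological degrees $\geq 0$.
\item A degree $-1$ element of it must therefore be zero.
\end{enumerate}
Your closing suggestion to compare with Proposition \ref{prop:globalcommutator} points at the right tool. It has to be used to produce this vanishing commutator followed by a degree argument, not to evaluate the observable spectrally.
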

\begin{proof}
    Consider $K=\ker(H^1(\act_{V_{\lambda_H},\s_H}'))\sub H^1(C,V_{\lambda,\s})(1)$. We need to show that $H^{-1}(\frd_{\lambda,\s})(K)=0$. By Theorem \ref{thm:clifford2}, we know that $H^{-1}(\frd_{\lambda,\s})(K)\sub H^{-1}(\mathbb{O}_{X,\s})$ commutes with adjoint $L$-observables. This implies that $H^{-1}(\frd_{\lambda,\s})(K)$ lies in the subalgebra of $H^*\mathbb{O}_{X,\s}$ generated by adjoint $L$-observables and translation $L$-observables (use Corollary \ref{cor:AB} and Corollary \ref{cor:clifford2split}), hence, implies that $H^{-1}(\frd_{\lambda,\s})(K)=0$ for cohomological degree reasons.
\end{proof}

Combining Theorem \ref{thm:clifford2} and Proposition \ref{prop:diagh-1}, we obtain the following Clifford relation between the diagonal $L$-observables and the adjoint $L$-observables.

\begin{thm}\label{thm:clifford}
    For $\lambda\neq 0$, the following identity holds \[[H^1(\fra\frd_{\s}), \overline{H^{-1}(\frd_{\lambda,\s})}]=\counit_{H^1(C,\frhc_{\s_H})}\cdot T_{e_{\lambda}}  \in \Hom^0(H^1(C,\frhc_{\s_H}) \otimes H^1(C,\frhc_{\s_H}^*)(1), H^0\mathbb{O}_{X,\s})\] where $T_{e_{\lambda}}\in H^0\mathbb{O}_{X,\s}$ is defined in \eqref{eq:shiftobs}. 
\end{thm}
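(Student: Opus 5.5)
The statement follows formally from Theorem \ref{thm:clifford2} and Proposition \ref{prop:diagh-1}; no new geometric input is needed.

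First, by Proposition \ref{prop:diagh-1}, for $\lambda\neq 0$ we have the factorization
\[
H^{-1}(\frd_{\lambda,\s})=\overline{H^{-1}(\frd_{\lambda,\s})}\circ H^1(\act_{V_{\lambda_H},\s_H}').
\]
Substituting this into the left-hand side of Theorem \ref{thm:clifford2} gives, as maps $H^1(C,\frhc_{\s_H})\otimes H^1(C,V_{\lambda,\s})(1)\to H^0\mathbb{O}_{X,\s}$,
\[
[H^1(\fra\frd_{\s}),\overline{H^{-1}(\frd_{\lambda,\s})}]\circ(\id\otimes H^1(\act_{V_{\lambda_H},\s_H}'))
=\counit_{H^1(C,\frhc_{\s_H})}\circ(\id\otimes H^1(\act_{V_{\lambda_H},\s_H}'))\cdot T_{e_{\lambda}}.
\]
This uses only that the commutator is bilinear, so precomposition in the second slot commutes with taking the commutator.

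Second, we cancel the common factor $\id\otimes H^1(\act_{V_{\lambda_H},\s_H}')$. For this it suffices that $H^1(\act_{V_{\lambda_H},\s_H}'):H^1(C,V_{\lambda,\s})(1)\to H^1(C,\frhc_{\s_H}^*)(1)$ is surjective. The paper already uses this surjectivity when it chooses the splitting before Corollary \ref{cor:clifford2split}, where it calls this map a quotient map. If one wants to justify it directly, the argument is as follows. Since $\lambda\neq0$, the map $\act'_{V_{\lambda_H}}:V_{\lambda_H}\otimes V_{-w_0\lambda_H}\to\frhc^*$ is a nonzero $\Hc$-equivariant map onto the irreducible $\frhc^*$ (as $H$ is almost simple), hence a split surjection of representations. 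Applying $\s_H$ gives a split surjection of local systems, and therefore a surjection on $H^1$.

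With surjectivity in hand, $\id\otimes H^1(\act'_{V_{\lambda_H},\s_H})$ is surjective, and the two maps out of $H^1(C,\frhc_{\s_H})\otimes H^1(C,\frhc_{\s_H}^*)(1)$ agree. This is exactly the claimed identity. The same conclusion can be read off from Corollary \ref{cor:clifford2split}: the restriction of $H^{-1}(\frd_{\lambda,\s})$ to the chosen complement $H^1(C,\frhc^*_{\s_H})(1)$ coincides with $\overline{H^{-1}(\frd_{\lambda,\s})}$, because the kernel $K$ is killed.

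The only step that is not pure bookkeeping is the surjectivity, and it is immediate from complete reducibility. So I expect no real obstacle; all the substance lies in Theorem \ref{thm:clifford2}, which is proved later.
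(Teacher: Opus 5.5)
Your proposal is correct and is exactly the paper's argument: the paper obtains Theorem~\ref{thm:clifford} by combining Theorem~\ref{thm:clifford2} with the factorization of Proposition~\ref{prop:diagh-1}, and then cancels the map $H^1(C,V_{\lambda,\s})(1)\to H^1(C,\frhc_{\s_H}^*)(1)$. The paper takes the surjectivity of that map for granted (it calls it a quotient map), whereas you justify it via irreducibility of $\frhc^*$ and complete reducibility, which is a welcome addition.
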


Then we study $H^0(\frd_{\lambda,\s})$. Consider the map \begin{equation}\label{eq:trivialLobs}
    H^2(\counit_{V_{\lambda_H},\s_H}): H^2(C,V_{\lambda,\s})(1) \to H^2(C, \uk_C)(1)\cong k
.\end{equation}

\begin{prop}\label{prop:diagh0}
    The following identity holds \[H^0(\frd_{\lambda,\s})=H^2(\counit_{V_{\lambda_H},\s_H})\cdot T_{e_{\lambda}}  \in \Hom^0(H^2(C,V_{\lambda,\s})(1), H^0\mathbb{O}_{X,\s}).\] 
\end{prop}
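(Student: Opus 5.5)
The plan is to reduce $H^0(\frd_{\lambda,\s})$ to a computation at a single point of $C$, and there to identify it with the Hecke eigen-isomorphism of $\cF_{\s}$ contracted by the Satake duality datum. Note that we cannot use Proposition \ref{prop:trans}, since its existence part relies on the present statement. So the real content is: (a) $H^0(\frd_{\lambda,\s})$ factors through $H^2(\counit_{V_{\lambda_H},\s_H})$; and (b) the resulting degree-zero operator $\bigoplus_e H^*(\int_{X,e}\cF_{\s})\to \bigoplus_e H^*(\int_{X,e+e_\lambda}\cF_{\s})$ is an isomorphism satisfying the conditions of Definition \ref{defn:translationLobs}. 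We then take it as the definition of $1_{e+e_\lambda,e}$, which makes it equal to $T_{e_\lambda}$.

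First, fix a closed point $x\in C$ (after base change to $\overline{\FF_q}$ if needed). The Gysin map from the fiber $V_{\lambda,\s,x}$ to $H^2(C,V_{\lambda,\s})(1)$ is surjective, since $H^2(C,V)(1)$ is the coinvariants of the geometric monodromy. By the fusion compatibility \eqref{eq:Lobsfusion}, applied to the inclusion of the point rather than the diagonal, it suffices to understand the fiber map
\[
V_{\lambda,\s,x}\otimes \int_X\cF_{\s}\to \int_X\cF_{\s}.
\]
This map is obtained by restricting $\frd_\lambda$ to legs at $x$. Its kernel is the correspondence $\Hk_{H,\leq\lambda_H,x}\to \Hk_{G,\leq\lambda,x}$, so after pulling back along $\D$ it is computed by the Hecke functor $T_{\lambda_H,x}$ on the first factor and $T_{\lambda_H,x}$ on the dual factor, glued along the Satake cycle. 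Writing $\cF_{\s}=\cF_{\s_H}\boxtimes\mathbb{D}^{\mathrm{Ver}}\cF_{\s_H}$, the eigen-isomorphisms give
\[
T_{\lambda_H,x}\cF_{\s_H}\cong V_{\lambda_H,\s_H,x}\otimes\cF_{\s_H},
\]
and dually $V_{-w_0\lambda_H,\s_H,x}\otimes\mathbb{D}^{\mathrm{Ver}}\cF_{\s_H}$ for the second factor. Using adjunction of $T_{\lambda_H}$ and $T_{-w_0\lambda_H}$, whose unit and counit are the Satake cycles $\unit,\counit$, the correspondence is then the projection formula composed with $\counit_{V_{\lambda_H}}$ on the fibers, tensored with the canonical isomorphism
\[
\cF_{\s_H}|_{\Bun_H^e}\otimes\mathbb{D}^{\mathrm{Ver}}\cF_{\s_H}|_{\Bun_H^e}\;\longrightarrow\;(\text{same})|_{\Bun_H^{e+e_\lambda}}
\]
induced by the Hecke transport. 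Since the contraction is $\counit$ on $V_{\lambda,\s,x}$, monodromy-equivariance forces factorization through $H^2(\counit_{V_{\lambda_H},\s_H})$, which proves (a). Sign conventions ($(-1)^{d_{\lambda_H}}$ from $\counit$ versus swap) must be tracked as in Proposition \ref{prop:localunit}.

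Second, we show the operator in (b) sends $\eta_{e,\s}$ to $\eta_{e+e_\lambda,\s}$. Under $H^0_c(\Bun_H^e,\omega)\cong k$, the transport is trace-compatible because it is induced by the eigen-isomorphism, which is compatible with Verdier duality on both factors. The operator is an isomorphism on all of $H^*$ because the composite with the analogous operator for $-w_0\lambda$ is, by the same computation and $\counit\circ\unit$, a nonzero scalar (namely $\dim$-normalized to $1$) times the identity. It commutes with $\frd_{\mu,\s}$ for $e_\mu=0$ because Hecke correspondences at different legs commute, and the commutativity survives fusion.

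The main obstacle I expect is the first step: making rigorous that the global diagonal cycle restricted to a point really is ``eigen-isomorphism followed by the Satake counit'' at the level of sheaves on $\Bun_H$, rather than only on cohomology. The natural tool is the compatibility of $\glob$ with the local computation in the Plancherel algebra. In degree $0$ one has $\Hom^0(V_\lambda,\PL_{X,\hbar})=\counit$ up to scalar, and I would first try to reduce to that local statement plus the eigen property.
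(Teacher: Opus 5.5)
\textbf{There is a genuine gap, in your description of the operator at the point $x$.}

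Your reduction to a geometric point is sound. Only the $\Hom^0(V_{\lambda,\s},\uk_C)$-summand of the sheaf map $V_{\lambda,\s}\otimes\int_X\cF_{\s}\to\uk_C\otimes\int_X\cF_{\s}$ contributes to $H^0(\frd_{\lambda,\s})$, and that summand is detected on a stalk.

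The problem is the claim that the stalk map is $\counit_{V_{\lambda_H}}$ tensored with a ``canonical Hecke transport'', and that this transport is the identity on all of $H^*(\int_{X,e}\cF_{\s})$ when $e_\lambda=0$. By the proof of Proposition \ref{prop:dim}, $H^*(\int_{X,e}\cF_{\s})$ is, up to shift, the full graded space of self-extensions $\Hom^*(\cF_{\s_H}|_{\Bun_H^e},\cF_{\s_H}|_{\Bun_H^e})\cong\wedge^*H^1(C,\frhc_{\s_H})$. So your claim says: after conjugating by the eigen-isomorphism $T_{\lambda_H,x}\cF_{\s_H}\cong V_{\lambda_H,\s_H,x}\otimes\cF_{\s_H}$, the Hecke functor sends every $\phi\in\Hom^*(\cF_{\s_H},\cF_{\s_H})$ to $\id\otimes\phi$.

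An eigen-isomorphism is a statement about objects. It gives no control over the induced map on higher self-extensions, and nothing in your argument rules out extra terms in $\End(V_{\lambda_H,\s_H,x})\otimes\Hom^{>0}$. Note that $V_{\lambda_H,\s_H}$ can be geometrically reducible even for strongly irreducible $\s_H$, so (a) is not automatic from monodromy either.

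Your suggested remedy does not close this. That $\frd^{\loc}_\lambda$ spans $\Hom^0(V_\lambda,\PL_{X,\hbar})$ constrains the correspondence, not its action on $\int_X\cF_{\s}$. The related points have the same issue:
\begin{itemize}
\item Your invertibility argument is circular. $\frd_\lambda\circ\frd_{-w_0\lambda}$ decomposes into $\frd_\nu$ over all constituents $V_\nu\subset V_\lambda\otimes V_{-w_0\lambda}$, and knowing that each $H^0(\frd_{\nu,\s})$ acts by a scalar is the statement being proved.
\item ``Commutativity survives fusion'' needs an argument, since commutators of global correspondences can be supported on the diagonal (compare Proposition \ref{prop:globalcommutator}). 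The paper uses the vanishing of the degree-$0$ local commutator together with \cite[Theorem\,4.47]{liu2025higherperiodintegralsderivatives}.
\item One might rescue the stalk route via the spectral action, under which $\cF_{\s_H}$ is a skyscraper at $\s_H$ and Hecke functors on objects supported there become $V_x\otimes-$. That is a substantial input your proposal does not supply.
\end{itemize}

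\textbf{How the paper avoids this.} It never computes the operator on all of $H^*$ directly. It uses three inputs:
\begin{enumerate}
\item[(i)] The value on the canonical element, $H^0(\frd_{\lambda,\s})(y)\,\eta_{e,\s}=H^2(\counit_{V_{\lambda_H},\s_H})(y)\,\eta_{e+e_\lambda,\s}$, from \cite[Lemma\,5.15]{wang2025specialcycleshtukascategorical}. Your Verdier-duality argument for condition (1) is the analogue of this.
\item[(ii)] Commutativity with the diagonal observables $\frd_{\mu,\s}$ for $e_\mu=0$.
\item[(iii)] Corollary \ref{cor:AB}: $H^*(\int_{X,e}\cF_{\s})$ is generated from $\eta_{e,\s}$ by the $H^{-1}(\frd_{\mu,\s})$.
\end{enumerate}
By (ii) and (iii), an operator commuting with diagonal observables is determined by its value on $\eta_{e,\s}$. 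Combined with (i), this gives three things at once:
\begin{itemize}
\item vanishing on $\ker H^2(\counit_{V_{\lambda_H},\s_H})$, which is your (a);
\item condition (3) of Definition \ref{defn:translationLobs} when $e_\lambda=0$, which you never verify;
\item the identification with $T_{e_\lambda}$, by uniqueness.
\end{itemize}
This is not circular. Corollary \ref{cor:AB} rests on Theorem \ref{thm:clifford2}, whose proof uses only (i), via Lemma \ref{lem:propertyofS}(1).

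The missing idea is therefore to replace your stalk computation on higher cohomology with this generation argument. You already have (i) and (ii).
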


\begin{proof}
    We need to show that each component of $H^0(\frd_{\lambda,\s})$ applied to appropriate elements in $H^2(C, V_{\lambda,\s})(1)$ satisfies the condition in Definition \ref{defn:translationLobs}.
    
    The condition Definition \ref{defn:translationLobs}(1) follows from \cite[Lemma\,5.15]{wang2025specialcycleshtukascategorical}. The condition Definition \ref{defn:translationLobs}(2) follows from the fact that diagonal $L$-observables are mutually commutative (see the proof of Proposition \ref{prop:filtrationvialoc}(1)). The condition Definition \ref{defn:translationLobs}(3) follows from (1)(2) by Corollary \ref{cor:AB}.
\end{proof}

\subsubsection{Reduced \texorpdfstring{$L$}{L}-observables}
Now we show that the $L$-observables can usually be reduced to the reduced algebra of $L$-observables defined in \eqref{eq:reducedLobs}.

Let $V\in \Rep(\Gc)^{\heartsuit}$ be an irreducible representation whose central character defines $(e, e)\in \pi_1(H)^2$. For any $\frc^{\loc}_V\in \Hom^0_{\Rep(\Gc)}(V, \PL_{X,\hbar}\langle d\rangle)$ with $0\leq d\leq 2$, consider the induced $L$-observables \[H^*(\glob(\frc^{\loc}_V)_{\s}):H^{*-d+2}(C,V_{\s})(-d/2+1)\to H^*\mathbb{O}_{X,\s}.\]

\begin{prop}\label{prop:reducedLobs}
    There exists a unique map \begin{equation}\label{eq:redobs}
        H^*(\glob(\frc^{\loc}_V)_{\s})^{\red}: H^{*-d+2}(C,V_{\s})(-d/2+1) \to H^*\mathbb{O}_{X,\s}^{\red}\end{equation} which we call the reduced $L$-observable, such that under the isomorphism \eqref{eq:transiso}, we have \[H^*(\glob(\frc^{\loc}_V)_{\s})=H^*(\glob(\frc^{\loc}_V)_{\s})^{\red}\otimes T_{e}.\]
\end{prop}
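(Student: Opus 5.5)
Under the isomorphism \eqref{eq:transiso}, an element of $H^*\mathbb{O}_{X,\s}$ has the form $\sum_{e_1,e_2} A_{e_1,e_2}\otimes 1_{e_1,e_2}$ with $A_{e_1,e_2}\in H^*\mathbb{O}_{X,\s}^{\red}$. The plan is to show that for every $x\in H^{*-d+2}(C,V_\s)(-d/2+1)$, the observable $H^*(\glob(\frc^{\loc}_V)_{\s})(x)$ has this form with $A_{e_1,e_2}$ independent of $e_1,e_2$ and supported only on the pairs $(e'+e,e')$. Uniqueness is then immediate, because \eqref{eq:transiso} is an isomorphism and $T_e$ is nonzero. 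So everything rests on existence.

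\textbf{Step 1: support.} The central character of $V$ is $(e,e)$, so the kernel $\Sat(V)$ lives on the components of $\Hk_G$ that shift the $\Bun_H$-components by $e$. Hence $\glob(\frc^{\loc}_V)_\s$ maps $\int_{X,e'}\cF_\s$ into $\int_{X,e'+e}\cF_\s$. In other words, the image of the observable lies in $\bigoplus_{e'}\mathbb{O}_{X,(e'+e,e'),\s}$, and we may write it as $\sum_{e'} B_{e'}\circ 1_{e'+e,e'}$ with $B_{e'}\in H^*\mathbb{O}_{X,e'+e,\s}$.

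\textbf{Step 2: independence of $e'$.} We must show that all the $B_{e'}$ correspond to the same element of $H^*\mathbb{O}_{X,\s}^{\red}$, i.e.\ that $1_{e''+e,e'+e}\circ B_{e'}\circ 1_{e'+e,e'}=B_{e''}\circ 1_{e''+e,e''}\circ 1_{e'',e'}$. Equivalently, the observable must commute with every translation $T_{e_0}$. To see this, use Proposition~\ref{prop:diagh0}: up to the scalar $H^2(\counit_{V_{\mu_H},\s_H})$, the translation $T_{e_\mu}$ is the image under $H^0(\frd_{\mu,\s})$ of a top-degree class. Since every $e_0\in\pi_1(H)$ equals $e_\mu$ for some dominant $\mu_H$, it suffices to show that $H^0(\frd_{\mu,\s})$ commutes with $H^*(\glob(\frc^{\loc}_V)_\s)$.

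For this commutation I would globalize a local commutation statement, in the same way Proposition~\ref{prop:globalcommutator} globalizes Proposition~\ref{prop:localcommutator}. Two facts make the local statement available. First, $\frd^{\loc}_\mu$ has degree $0$. Second, since $d\le 2$, a degree count shows that $[\frc^{\loc}_V,\frd^{\loc}_\mu]$ lands in $\hbar$ times degree $d-2\le 0$. On the top-degree class $H^2(C,V_{\mu,\s})(1)$, the Gysin term $\D_{C,!}$ appearing in the global commutator then contributes nothing in the relevant cohomological degree. More concretely: the product of a class in $H^2(C,\cdot)$ with anything, pushed forward along the diagonal, lands in $H^{\ge 4}$ of $C$-type sheaves, and those vanish. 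So the commutator restricted to $H^2(C,V_{\mu,\s})$ is zero.

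\textbf{Main obstacle.} The hard step is this vanishing of the commutator term. Theorem~4.47 of \cite{liu2025higherperiodintegralsderivatives} expresses the global commutator as $\D_{C,!}$ of the globalized local commutator, and the degree argument must work uniformly for all $0\le d\le 2$. It is exactly here that the hypothesis $0\le d\le 2$ enters. Once the commutation is established, set $H^*(\glob(\frc^{\loc}_V)_\s)^{\red}$ to be the common value of the $B_{e'}$ transported to $H^*\mathbb{O}^{\red}_{X,\s}$; the identity $H^*(\glob(\frc^{\loc}_V)_\s)=H^*(\glob(\frc^{\loc}_V)_\s)^{\red}\otimes T_e$ then follows directly from \eqref{eq:shiftobs}.
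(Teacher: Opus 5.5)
Your Step~1, the uniqueness argument, and the reduction to showing that $A=H^*(\glob(\frc^{\loc}_V)_{\s})(x)$ commutes with $H^0(\frd_{\mu,\s})(y)$ for top-degree $y\in H^2(C,V_{\mu,\s})(1)$ are all fine. That reduction uses Proposition~\ref{prop:diagh0} and the fact that every $e_0\in\pi_1(H)$ equals some $e_\mu$.

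The gap is your claim that this single commutator vanishes for degree reasons. By \cite[Theorem\,4.47]{liu2025higherperiodintegralsderivatives}, as in Lemma~\ref{lem:cliffordprim}, the diagonal term enters through restriction to the diagonal. So the commutator of the two $L$-observables is $c'_{\s}(x\cup y)$, where $c'=\hbar^{-1}[\frc^{\loc}_V,\frd^{\loc}_\mu]$ has degree $d-2$ and $x\cup y$ is the cup product on $C$. Nothing here lands in $H^{\ge 4}$. For $d\le 1$ you have $c'=0$ outright. For $d=2$, however, $c'$ has degree $0$, and for $x\in H^0(C,V_{\s})$ the class $x\cup y\in H^2(C,(V\otimes V_\mu)_{\s})(1)$ survives. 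Its image is a degree-$0$ observable, which by Proposition~\ref{prop:diagh0} is a scalar multiple $s\,T_{e+e_\mu}$, and no degree count forces $s=0$. This case is not marginal. Take $\frc^{\loc}_V=\frd^{\loc}_{\det,\lambda}$ and $x=\id\in H^0(C,V_{\lambda,\s})\cong H^0(C,\End(V_{\lambda_H,\s_H}))$. That is exactly the piece $H^0\frd_{\det,\lambda,\s}$ used later in Lemma~\ref{lem:intnumber=trace}, and there $c'=\hbar^{-1}[\frd^{\loc}_{\det,\lambda},\frd^{\loc}_\mu]$ has no reason to vanish.

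The commutation you want is true, but the missing idea is that translations have finite order. The paper goes one commutator further. The double commutator $[[H^*(\glob(\frc^{\loc}_V)_{\s}),H^*(\frd_{\lambda,\s})],H^*(\frd_{\lambda',\s})]$ comes from $\hbar^{-2}$ times a local term of degree $d-4<0$, so it vanishes. By Proposition~\ref{prop:diagh0} this gives $[[A,T_{e_0}],T_{e_0'}]=0$. Since $T_{e_0}$ has finite order, $\operatorname{ad}_{T_{e_0}}$ is semisimple in characteristic $0$, so $\operatorname{ad}_{T_{e_0}}^2A=0$ forces $\operatorname{ad}_{T_{e_0}}A=0$.

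You can also repair your own route directly. Since $s\,T_{e+e_\mu}$ commutes with $T_{e_\mu}$, you get $\operatorname{Ad}_{T_{e_\mu}}^k(A)=A-ks\,T_e$ for all $k$. Taking $k=n$, the order of $e_\mu$, gives $\operatorname{Ad}_{T_{e_\mu}}^n=\id$, hence $ns=0$ and $s=0$. With either addition your proof goes through; as written, the vanishing step fails exactly in the case $d=2$, $x\in H^0(C,V_{\s})$.
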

\begin{proof}
    We only need to show that the image of $H^*(\glob(\frc^{\loc}_V)_{\s})$ commutes with $T_e\in H^*\mathbb{O}_{X,\s}$ for all $e\in \pi_1(H)$. The statement is trivial when $g=1$. When $g\neq 1$, by \cite[Theorem\,4.47]{liu2025higherperiodintegralsderivatives}, we know 
    \[[[H^*(\glob(\frc^{\loc}_V)_{\s}), H^*(\frd_{\lambda,\s})],H^*(\frd_{\lambda',\s})]=0\] for any $\lambda_H,\lambda'_H\in X_*(T_H)_+$. By Proposition \ref{prop:diagh0}, this implies that 
    \[[[H^*(\glob(\frc^{\loc}_V)_{\s}), T_e],T_{e'}]=0\]
    for all $e,e'\in \pi_1(H)$. This implies the desired commutativity.
\end{proof}

In particular, we have the reduced diagonal, Lefschetz, and adjoint $L$-observables:
\begin{equation}\label{eq:reddiag}
    H^*(\frd_{\lambda,\s})^{\red}:H^{*+2}(C, V_{\lambda,\s})(1) \to H^*\mathbb{O}_{X,\s}^{\red}
\end{equation}

\begin{equation}\label{eq:redlef}
    H^*(\frd_{\det,\lambda,\s})^{\red}:H^{*}(C, V_{\lambda,\s}) \to H^*\mathbb{O}_{X,\s}^{\red}
\end{equation}
\begin{equation}\label{eq:redad}
    H^*(\fra\frd_{\s})^{\red}:H^*(C,\frhc_{\s_H}) \to H^*\mathbb{O}_{X,\s}^{\red}.
\end{equation}

\subsection{Ran filtration}
In this section, we introduce a natural filtration $\{F_{\bullet}^{\Ran}H^*\mathbb{O}_{X,\s}^{\red}\}_{\bullet\in \ZZ}$ on the reduced algebra of $L$-observables $H^*\mathbb{O}_{X,\s}^{\red}$, which we call the Ran filtration.\footnote{The name ``Ran filtration" is taken from \cite[\S5.5.4]{FYZvolume} in a different but related context.}

\subsubsection{Definition of the filtration}\label{sec:deffil}

We introduce a filtration on $H^*\mathbb{O}_{X,e,\s}$ for each $e\in \pi_1(H)$. We choose $0\neq \lambda_H\in X_*(T_H)_+$ such that $e_{\lambda}=0$. Then the map $\wedge^*\overline{H^{-1}(\frd_{\lambda,\s})}\cdot \eta_{e,\s}$ in Corollary \ref{cor:AB} induces an isomorphism of vector spaces \[\wedge^*(H^1(C,\frhc_{\s_H}^*)(1))\cong H^{-*}(\int_{X,e}\cF_{\s}).\] Therefore, the vector space $H^*(\int_{X,e}\cF_{\s})$ is identified with the free exterior algebra generated by the finite-dimensional vector space $H^1(C,\frhc_{\s_H}^*)(1)$, and the endomorphism algebra $H^*\mathbb{O}_{X,e,\s}=\End(H^*(\int_{X,e}\cF_{\s}))$ can be identified with the ring of differential operators on the odd vector space $H^1(C,\frhc_{\s_H}^*)(1)^*$. In particular, it admits a natural increasing filtration by the order of differential operators. We use $\{F_{\bullet}^{\Ran}H^*\mathbb{O}_{X,e,\s}\}_{\bullet\in \ZZ}$ to denote this filtration.

Under the canonical isomorphism $H^*\mathbb{O}_{X,\s}^{\red}\cong H^*\mathbb{O}_{X,e,\s}$, we obtain the Ran filtration $\{F_{\bullet}^{\Ran}H^*\mathbb{O}_{X,\s}^{\red}\}_{\bullet\in \ZZ}$. By Proposition \ref{prop:reducedLobs}, this filtration is independent of the choice of $e\in \pi_1(H)$.

This filtration is multiplicative and satisfies \begin{equation}\label{eq:Ranfilcommutator}
    [F_i^{\Ran}H^*\mathbb{O}_{X,\s}^{\red}, F_j^{\Ran}H^*\mathbb{O}_{X,\s}^{\red}] \sub F_{i+j-1}^{\Ran}H^*\mathbb{O}_{X,\s}^{\red}
\end{equation} for any $i,j\in \ZZ$.

\subsubsection{Associated graded algebra}
Since the filtration $F_{\bullet}^{\Ran}H^*\mathbb{O}_{X,\s}^{\red}$ on $H^*\mathbb{O}_{X,\s}^{\red}$ is multiplicative, there is an induced algebra structure on the associated graded vector space $\Gr^{\Ran}_{\bullet}H^*\mathbb{O}_{X,\s}^{\red}$, which admits the following description: 
\begin{prop}\label{prop:ABLobs}
    There is a canonical isomorphism of algebras \begin{equation} \wedge^{\bl}(H^1(C,\frhc_{\s_H}^*)(1)\oplus H^1(C,\frhc_{\s_H})) \isom \Gr_{\bullet}^{\Ran}H^*\mathbb{O}_{X,\s}^{\red} \end{equation} where $\bl$ is a grading variable taking integer values.
\end{prop}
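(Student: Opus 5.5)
The plan is to realize the reduced algebra concretely as a Clifford (Weyl-type) algebra and then read off its associated graded. Fix $e\in\pi_1(H)$ and $0\neq\lambda_H$ with $e_\lambda=0$. Write $W:=H^1(C,\frhc_{\s_H}^*)(1)$ and $W':=H^1(C,\frhc_{\s_H})$; the cup product pairing $\counit_{H^1(C,\frhc_{\s_H})}$ of \eqref{eq:couniths} makes $W'\cong W^*$, a perfect pairing by Poincaré duality on $C$. By Corollary \ref{cor:AB}, $H^*(\int_{X,e}\cF_{\s})\cong\wedge^*W$, with $w\in W$ acting through $\overline{H^{-1}(\frd_{\lambda,\s})}^{\red}(w)$ by left multiplication. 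By Theorem \ref{thm:clifford}, reduced via Proposition \ref{prop:reducedLobs} (with $T_{e_\lambda}=T_0$ acting as the identity in the reduced algebra), the reduced adjoint observables $H^1(\fra\frd_{\s})^{\red}(w')$, $w'\in W'$, satisfy
\[
[H^1(\fra\frd_{\s})^{\red}(w'),\overline{H^{-1}(\frd_{\lambda,\s})}^{\red}(w)]=\counit_{H^1(C,\frhc_{\s_H})}(w'\otimes w).
\]
Here the brackets are graded commutators, since both classes are odd.

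The first step is to show that the adjoint observables anticommute among themselves and kill $\eta_{e,\s}$. Since $H^1(\fra\frd_{\s})^{\red}(w')$ has degree $+1$ and $\eta_{e,\s}$ spans the top-degree piece $H^0$ (the degrees are $\le 0$ by Proposition \ref{prop:dim}), it annihilates $\eta_{e,\s}$. The Clifford relation then forces $H^1(\fra\frd_{\s})^{\red}(w')$ to act on $\wedge^*W$ as the contraction $\iota_{w'}$: an odd operator with prescribed graded commutators with all generators $w$, vanishing on the cyclic vector, is determined by these data. Consequently the operators $w\wedge$ and $\iota_{w'}$ generate the Clifford algebra $\mathrm{Cl}(W\oplus W')$. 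This algebra acts on the spinor module $\wedge^*W$, and the action is faithful and surjective onto $\End(\wedge^*W)$ since $\mathrm{Cl}$ is simple of the right dimension $2^{2\dim W}$. Hence $H^*\mathbb{O}^{\red}_{X,\s}\cong\mathrm{Cl}(W\oplus W')$, the ring of differential operators on the odd space $W^*$.

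The second step is to identify the Ran filtration with the standard Clifford filtration. Under this identification, $F^{\Ran}_\bullet$ is the order of differential operators, i.e.\ $w\wedge$ has filtration degree $0$ and $\iota_{w'}$ degree $1$. Equivalently, after rescaling by the grading variable $\bl$, it is the PBW-type filtration for which $W$ and $W'$ both sit in degree one. The associated graded of a Clifford algebra under its PBW filtration is the exterior algebra $\wedge^\bullet(W\oplus W')$, because the commutator \eqref{eq:Ranfilcommutator} drops filtration degree, which kills the scalar $\counit$ term. The canonical map then sends the generators $w$ and $w'$ to the symbols of $\overline{H^{-1}(\frd_{\lambda,\s})}^{\red}(w)$ and $H^1(\fra\frd_{\s})^{\red}(w')$. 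It is an algebra map because the symbols supercommute, and it is an isomorphism by a dimension count.

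The main obstacle is canonicity, i.e.\ independence of the choice of $\lambda_H$ and $e$. Independence of $e$ follows from Proposition \ref{prop:reducedLobs}. For $\lambda$, the map $\overline{H^{-1}(\frd_{\lambda,\s})}^{\red}$ depends on $\lambda$ a priori, but Theorem \ref{thm:clifford} shows that its commutators with the adjoint observables are independent of $\lambda$. So for two choices $\lambda,\lambda'$, the difference $\overline{H^{-1}(\frd_{\lambda,\s})}^{\red}-\overline{H^{-1}(\frd_{\lambda',\s})}^{\red}$ supercommutes with all $\iota_{w'}$. It also supercommutes with all $w\wedge$, since diagonal observables mutually commute. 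Hence the difference is central in degree $-1$ and therefore zero. This proves that the isomorphism is independent of all choices.
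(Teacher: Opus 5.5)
Your proposal is correct and is essentially the paper's argument made explicit: Corollary \ref{cor:AB} and Theorem \ref{thm:clifford} identify $H^*\mathbb{O}_{X,\s}^{\red}$ with the Clifford algebra of $W\oplus W'$, i.e.\ differential operators on the odd space $W^*$, and the associated graded of the Ran filtration is the exterior algebra. Your supercenter argument additionally shows that the generators are independent of $\lambda_H$, whereas the paper only records that the filtration is. One correction: the Ran filtration is not the PBW filtration with $W$ and $W'$ both in degree one (for instance $F_0^{\Ran}=\wedge^*W\neq k$); it is the weighted filtration with $W$ in degree $0$ and $W'$ in degree $1$, but your symbol map and dimension count go through verbatim for it.
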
 
\begin{proof}
    This follows from Theorem \ref{thm:clifford}.
\end{proof}

\subsubsection{Characterizing the filtration via \texorpdfstring{$L$}{L}-observables}

\begin{prop}\label{prop:filtrationvialoc}
    The following holds: \begin{enumerate}
        \item For any $V\in \Rep(\Gc)^{\heartsuit}$ and any $\frc^{\loc}_V\in \Hom^0_{\Rep(\Gc)}(V, \PL_{X,\hbar})$, the induced map \[H^*(\glob(\frc^{\loc}_V)_{\s})^{\red}:H^{*+2}(C,V_{\s})(1)\to H^*\mathbb{O}_{X,\s}^{\red}\] defined in \eqref{eq:redobs} factors through $F_0^{\Ran}H^*\mathbb{O}_{X,\s}^{\red}$.

        \item For any $V'\in \Rep(\Gc)^{\heartsuit}$ and any $\frc^{\loc}_{V'}\in \Hom^0_{\Rep(\Gc)}(V', \PL_{X,\hbar}\langle 2\rangle)$, the induced map \[H^*(\glob(\frc^{\loc}_{V'})_{\s})^{\red}:H^*(C,V_{\s}')\to H^*\mathbb{O}_{X,\s}^{\red}\] factors through $F_1^{\Ran}H^*\mathbb{O}_{X,\s}^{\red}$.
    \end{enumerate}
\end{prop}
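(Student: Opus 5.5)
Both parts come from one mechanism. First I show that observables coming from degree-$0$ local correspondences all commute with one another. Then I express the Ran filtration through iterated commutators with the diagonal observables $\overline{H^{-1}(\frd_{\lambda,\s})}$, which generate the ``position'' part of the Weyl--Clifford algebra.

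Fix $e$ and $0\neq\lambda_H$ with $e_\lambda=0$. Via Corollary \ref{cor:AB}, $H^*(\int_{X,e}\cF_{\s})\cong\wedge^*W$ with $W=H^1(C,\frhc^*_{\s_H})(1)$, and $H^*\mathbb{O}^{\red}_{X,\s}$ becomes the algebra of odd differential operators. Concretely:
\begin{itemize}
\item the operators $\overline{H^{-1}(\frd_{\lambda,\s})}(w)$ act by left multiplication by $w$;
\item by Theorem \ref{thm:clifford}, $H^1(\fra\frd_{\s})$ acts by the odd derivations dual to $W$.
\end{itemize}
In this model, $F^{\Ran}_n$ consists of the operators $D$ such that every $(n+1)$-fold iterated (super)commutator of $D$ with multiplication operators vanishes. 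Equivalently, $D\in F_n$ if and only if $[D,w]\in F_{n-1}$ for all $w\in W$, with $F_{-1}=0$. This is the standard characterization of the order filtration, and since $W$ generates $\wedge^*W$ it suffices to test against $w\in W$.

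For (1), I show that $H^*(\glob(\frc^{\loc}_V)_{\s})^{\red}$ supercommutes with $\overline{H^{-1}(\frd_{\lambda,\s})}$. This gives order $0$, i.e.\ left multiplication by an element of $\wedge^*W$. To get the commutation, I use \cite[Theorem\,4.47]{liu2025higherperiodintegralsderivatives}, which is available when $g\neq1$; the case $g=1$ is handled separately, where $\dim W$ issues trivialize or the argument is adapted. That theorem says the global commutator $[\glob(\frc_V^{\loc}),\frd_\lambda]$ is the globalization via $\D_{C,!}$ of the local commutator in $\PL_{X,\hbar}$. The local commutator of two degree-$0$ elements of $\PL_{X,\hbar}$ must be divisible by $\hbar$, because $\PL_X=\PL_{X,\hbar}/\hbar$ is commutative. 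For degree reasons (the target would be $\PL_{X,\hbar}\langle -2\rangle$, whose relevant Hom vanishes), this commutator is therefore zero.

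One point needs care: the local statement yields commutation only for the full $\Gamma(C,\cdot)$-valued observables after fusion. So I apply \eqref{eq:Lobsfusion} and \eqref{eq:Lobsassocfusion} to pass from the $\boxcirc$ commutator to the commutator in $H^*\mathbb{O}_{X,\s}$ of the images. The same argument, applied with $V$ itself diagonal, also supplies the mutual commutativity of diagonal observables that the paper cites.

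For (2), I compute $[H^*(\glob(\frc^{\loc}_{V'})_{\s})^{\red},\overline{H^{-1}(\frd_{\lambda,\s})}]$ by the same method. Now the local commutator $[\frc^{\loc}_{V'},\frd^{\loc}_\lambda]$ is $\hbar$ times an element of $\Hom^0(V'\otimes V_\lambda,\PL_{X,\hbar})$ of degree $0$. Globally, $\hbar$ is absorbed by $\D_{C,!}$, exactly as in Proposition \ref{prop:globalcommutator}. Hence the commutator is an observable of the type treated in (1), composed with a Gysin map, and so lies in $F_0$ by (1). By the characterization above, the original observable lies in $F_1$.

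The main obstacle is making this $\hbar$-divisibility argument rigorous for arbitrary degree-$0$ and degree-$2$ local correspondences. Specifically, I must check that the commutator of local correspondences in $\PL_{X,\hbar}$ really equals $\hbar$ times a well-defined class. For this I would use the flatness of $\PL_{X,\hbar}$ over $k[\hbar]$ together with the vanishing of negative-degree Homs from $\Rep(\Gc)$ into $\PL_X$, which follows from Theorem \ref{thm:derivedsatake}. I must also track the signs from the super-commutator, and, in the reduced setting, use Proposition \ref{prop:reducedLobs} to strip off the translation factors $T_e$.
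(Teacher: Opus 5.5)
Your proposal is correct and follows essentially the paper's proof. Both arguments establish commutation with the diagonal observables via \cite[Theorem\,4.47]{liu2025higherperiodintegralsderivatives}, using vanishing (resp.\ $\hbar$-divisibility) of local commutators in $\PL_{X,\hbar}$, and then characterize $F_0^{\Ran}$ and $F_1^{\Ran}$ by commutators. The differences are minor: the paper tests against all of $F_0^{\Ran}$ (maximal commutative, with $F_1^{\Ran}$ the largest subspace satisfying $[F_1^{\Ran},F_0^{\Ran}]\subset F_0^{\Ran}$) where you equivalently test only against the generators $W$; you justify the local commutator vanishing that the paper takes for granted; and for $g=1$ you can say directly, as the paper does, that $W=0$ so $H^*\mathbb{O}_{X,\s}^{\red}=k=F_0^{\Ran}$.
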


\begin{proof}
    If $g=1$, one has $\int_{X,e}\cF_{\s}\cong k$ for each $e\in\pi_1(H)$ and there is nothing to prove. We can assume $g\neq 1$. For simplicity, we assume that $|\pi_1(H)|=1$ and leave the general case to the reader.
    
    For (1), note that for any $V,W\in \Rep(\Gc)^{\heartsuit}$ and \[\frc_V^{\loc}\in \Hom^0_{\Rep(\Gc)}(V, \PL_{X,\hbar}),\quad \frc_W^{\loc}\in \Hom^0_{\Rep(\Gc)}(W, \PL_{X,\hbar}),\] since $[\frc_V^{\loc},\frc_W^{\loc}]=0$, by \cite[Theorem\,4.47]{liu2025higherperiodintegralsderivatives}, one has \begin{equation}\label{eq:commutator0}[\glob(\frc_V^{\loc}),\glob(\frc_W^{\loc})]=0.\end{equation} By Corollary \ref{cor:AB} and Proposition \ref{prop:diagh0}, we know $F_0^{\Ran}H^*\mathbb{O}_{X,\s}^{\red}$ is generated by diagonal $L$-observables in \eqref{eq:diagLobs}. Since elements in $\im(H^*(\glob(\frc_V^{\loc})_{\s})^{\red})$ commute with diagonal $L$-observables by \eqref{eq:commutator0}, we have $[\im(H^*(\glob(\frc_V^{\loc})_{\s})^{\red}),F_0H^*\mathbb{O}_{X,\s}^{\red}]=0$. One easily verifies that $F_0H^*\mathbb{O}_{X,\s}^{\red}\sub H^*\mathbb{O}_{X,\s}^{\red}$ is a maximal commutative subalgebra, which implies (1).

    For (2), note that a similar argument as above shows that $[\im(H^*(\glob(\frc_{V'}^{\loc})_{\s})^{\red}),F_0H^*\mathbb{O}_{X,\s}^{\red}]\sub F_0H^*\mathbb{O}_{X,\s}^{\red}$. One easily verifies that $F_1H^*\mathbb{O}_{X,\s}^{\red}\sub H^*\mathbb{O}_{X,\s}^{\red}$ is the maximal subspace such that $[F_1H^*\mathbb{O}_{X,\s}^{\red},F_0H^*\mathbb{O}_{X,\s}^{\red}]\sub F_0H^*\mathbb{O}_{X,\s}^{\red}$. This implies that $\im(H^*(\glob(\frc_{V'}^{\loc})_{\s}))^{\red}\sub F_1H^*\mathbb{O}_{X,\s}^{\red}$.
\end{proof}

Note that Proposition \ref{prop:filtrationvialoc} and Corollary \ref{cor:AB} together imply that the filtration $F_{\bullet}^{\Ran}H^*\mathbb{O}_{X,\s}^{\red}$ is independent of the choice of $\lambda_H\in X_*(T_H)_+$.

\subsection{Proof of Clifford relation}\label{sec:relproof}
In this section, we give the proof of Theorem \ref{thm:clifford2}.

Consider the cup product \begin{equation}
    \cup: \Gamma(C,\frhc_{\s_H})\otimes \Gamma(C,V_{\lambda,\s})\langle 2\rangle \to \Gamma(C, \frhc_{\s_H}\otimes V_{\lambda,\s})\langle 2\rangle\end{equation} and the map \begin{equation}
        \Gamma(\act_{V_{\lambda_H, \s_H}}\otimes\id): \Gamma(C,\frhc_{\s_H}\otimes V_{\lambda_H,\s_H}\otimes V_{\lambda_H,\s_H}^*)\langle 2\rangle\to \Gamma(C,V_{\lambda,\s})\langle 2\rangle
    .\end{equation}
We first observe the following primitive version of the Clifford relation.
\begin{lemma}\label{lem:cliffordprim}
    One has \[[\fra\frd_{\s}, \frd_{\lambda,\s}]=\frd_{\lambda,\s}\circ \Gamma(\act_{V_{\lambda_H, \s_H}}\otimes\id)\circ \cup \in \Hom^0(\Gamma(C,\frhc_{\s_H})\otimes \Gamma(C,V_{\lambda,\s})\langle 2\rangle, \mathbb{O}_{X,\s}).\]
\end{lemma}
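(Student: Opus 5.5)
The plan is to deduce Lemma \ref{lem:cliffordprim} from the global commutator relation, Proposition \ref{prop:globalcommutator}, by passing from cohomological correspondences to $L$-observables. The two tools are associativity \eqref{eq:Lobsassoc} and fusion compatibility \eqref{eq:Lobsfusion} of the construction $\frc\mapsto\frc_{\s}$.

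\textbf{Step 1: the left-hand side.} Apply the $L$-observable construction to the two-leg correspondences $\fra\frd\boxcirc\frd_{\lambda}$ and $\frd_{\lambda}\boxcirc\fra\frd$. By \eqref{eq:Lobsassoc}, these give $\rmm\circ(\fra\frd_{\s}\otimes\frd_{\lambda,\s})$ and $\rmm\circ(\frd_{\lambda,\s}\otimes\fra\frd_{\s})$ as maps out of
\[
\Gamma(C^2,\frhc_{\s_H}\boxtimes V_{\lambda,\s})\langle 4\rangle\cong\Gamma(C,\frhc_{\s_H})\langle 2\rangle\otimes\Gamma(C,V_{\lambda,\s})\langle 2\rangle
\]
(Künneth, with the swap of factors). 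One has to check that the identification of $\Sat(V^I\boxtimes V^J)$ with $\Sat(V^J\boxtimes V^I)$ used to define $[~,~]$ corresponds, on the $\s$-side, to the Koszul-signed swap of the two tensor factors. Granting this, $[\fra\frd,\frd_{\lambda}]_{\s}$ equals the graded commutator $[\fra\frd_{\s},\frd_{\lambda,\s}]$ in $\mathbb{O}_{X,\s}$. The degree-$2$ shift of $\fra\frd$ should account for the shift $\langle 2\rangle$ on the $\Gamma(C,\frhc_{\s_H})$ factor.

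\textbf{Step 2: the right-hand side.} Apply the construction to $\D_{C,!}((\act_{V_{\lambda_H}}\boxtimes\id)^*\frd_{\lambda})$. By the fusion compatibility \eqref{eq:Lobsfusion}, in its pushforward form (dual to the pullback statement), the observable of $\D_{C,!}(\frc)$ is the observable of $\frc$ precomposed with the restriction-to-diagonal map $\Gamma(C^2,-)\to\Gamma(C,\D_C^*-)$. Under Künneth this restriction is exactly the cup product $\cup$. Next, the pullback $(\act_{V_{\lambda_H}}\boxtimes\id)^*$ along a morphism of representations becomes precomposition with the induced map of local systems, namely $\Gamma(\act_{V_{\lambda_H},\s_H}\otimes\id)$, after identifying $V_{\lambda,\s}=V_{\lambda_H,\s_H}\otimes c_{\Hc}(V_{\lambda_H,\s_H})\cong V_{\lambda_H,\s_H}\otimes V_{\lambda_H,\s_H}^*$. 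This yields $\frd_{\lambda,\s}\circ\Gamma(\act\otimes\id)\circ\cup$.

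\textbf{Step 3: conclusion and main obstacle.} Equating the two sides gives the lemma; this needs $g\neq1$ for Proposition \ref{prop:globalcommutator}, while for $g=1$ the period integrals are one-dimensional and both sides vanish for degree reasons. I expect the main obstacle to be bookkeeping: making precise that the functor $\frc\mapsto\frc_{\s}$ intertwines $\D_{C,!}$ on correspondences with the cup product, and that the sign conventions (parity shifts $\langle n\rangle$, Koszul signs in the commutator) match. I would handle this by writing $\frc_{\s}$ as a composition of base change along $\Sht$-type diagrams and using that $\D_{C,!}$ commutes with these pushforwards and pullbacks, as in \cite[\S6.2]{liu2025higherperiodintegralsderivatives}.
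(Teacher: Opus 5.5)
Your proposal is correct and follows the paper's proof: for $g\neq 1$ the lemma is the $\s$-specialization of Proposition~\ref{prop:globalcommutator}, and your associativity/fusion bookkeeping (commutator of $\boxcirc$-products $\mapsto$ graded commutator in $\mathbb{O}_{X,\s}$, and $\D_{C,!}$ $\mapsto$ precomposition with $\cup$) spells out what the paper leaves implicit. For $g=1$ the paper's justification is more direct than your ``degree reasons'': strong irreducibility gives $H^0=H^2=0$ for $\frhc_{\s_H}$, and the Euler characteristic $(2-2g)\dim\frhc$ vanishes, so $\Gamma(C,\frhc_{\s_H})=0$ and the source of both maps is zero.
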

\begin{proof}
    When $g=1$, the identity trivially holds since $\Gamma(C,\frhc_{\s_H})=0$. When $g\neq 1$, the identity follows from Proposition \ref{prop:globalcommutator}.
\end{proof}

\begin{proof}[Proof of Theorem \ref{thm:clifford2}]
The theorem is trivial when $\lambda=0$. We only consider the case when $e_{\lambda}=0$ and $\lambda\neq 0$ since the general case can be proved in a similar way.  Consider the subspace \[S_{\lambda} = \bigcap_{x\in H^2(C,V_{\lambda,\s})(1)} \ker(H^0(\frd_{\lambda,\s})(x)-H^2(\counit_{V_{\lambda_H},\s_H})(x))\sub H^*(\int_{X}\cF_{\s}) .\]

\begin{lemma}\label{lem:propertyofS}
    The following holds:
    \begin{enumerate}
        \item We have $\eta_{e,\s}\in S_{\lambda}$ for each $e\in \pi_1(H)$.
        \item $S_{\lambda}$ is stable under $\im (H^*(\frd_{\lambda, \s}))\sub H^*(\mathbb{O}_{X,\s})$.
    \end{enumerate}
\end{lemma}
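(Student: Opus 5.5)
Both parts reduce to the fact, already used in the proofs of Proposition \ref{prop:diagh0} and Proposition \ref{prop:filtrationvialoc}(1), that the diagonal $L$-observables commute with each other.

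For (1), I would unwind the definition of $S_{\lambda}$. We must show that $H^0(\frd_{\lambda,\s})(x)\eta_{e,\s}=H^2(\counit_{V_{\lambda_H},\s_H})(x)\,\eta_{e,\s}$ for every $x\in H^2(C,V_{\lambda,\s})(1)$. This is exactly the content cited from \cite[Lemma\,5.15]{wang2025specialcycleshtukascategorical} in the proof of Proposition \ref{prop:diagh0}. That lemma computes the action of the degree-zero part of the diagonal correspondence on the canonical class $\eta_{e,\s}$, which is the fundamental class of $\omega_{\Bun_H^e}$ paired through $\cF_{\s_H}\otimes\mathbb{D}^{\mathrm{Ver}}(\cF_{\s_H})$. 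Concretely:
\begin{itemize}
\item The diagonal correspondence $\frd_{\lambda}$ is the pushforward of the fundamental class of $\Hk_{H,\leq\lambda_H}$.
\item On the eigensheaf $\cF_{\s_H}\boxtimes\mathbb{D}^{\mathrm{Ver}}(\cF_{\s_H})$, it acts through the Hecke eigen-isomorphism composed with the Satake counit $\counit_{V_{\lambda_H}}$.
\item In top degree on $C$, this gives precisely $H^2(\counit_{V_{\lambda_H},\s_H})$ times the class $\eta_{e+e_\lambda,\s}$.
\end{itemize}
Since $e_\lambda=0$, we obtain $\eta_{e,\s}\in S_\lambda$. I would not use Proposition \ref{prop:diagh0} itself here, since the present lemma is part of the proof of Theorem \ref{thm:clifford2}, on which Corollary \ref{cor:AB} and hence part (3) of that proposition depend. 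Only the lemma-level input, condition (1), is available.

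For (2), I would use the mutual commutativity of diagonal $L$-observables. By \cite[Theorem\,4.47]{liu2025higherperiodintegralsderivatives}, combined with the vanishing of commutators of local correspondences into the commutative-at-$\hbar$-degree-zero part, we have $[\frd_{\lambda}^{\loc},\frd_{\lambda}^{\loc}]=0$ as a commutator of two degree-zero elements of $\PL_{X,\hbar}$. This is equation \eqref{eq:commutator0}, and it globalizes to $[\frd_{\lambda},\frd_{\lambda}]=0$. Taking $\s$-components through \eqref{eq:Lobsassoc}, every element of $\im H^*(\frd_{\lambda,\s})$ supercommutes with every $H^0(\frd_{\lambda,\s})(x)$. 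Since $H^0(\frd_{\lambda,\s})(x)$ is even, supercommuting here means genuinely commuting.

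Each operator $H^0(\frd_{\lambda,\s})(x)-H^2(\counit)(x)$ therefore commutes with every $y\in\im H^*(\frd_{\lambda,\s})$. So if $v\in S_\lambda$, then $\bigl(H^0(\frd_{\lambda,\s})(x)-H^2(\counit)(x)\bigr)(yv)=y\bigl(H^0(\frd_{\lambda,\s})(x)-H^2(\counit)(x)\bigr)v=0$. This holds for every $x$, hence $yv\in S_\lambda$.

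The main obstacle is justifying commutativity when $g=1$, where \cite[Theorem\,4.47]{liu2025higherperiodintegralsderivatives} is not available. In that case I would argue directly: $\int_{X,e}\cF_{\s}\cong k$ by Proposition \ref{prop:dim}, since $H^1(C,\frhc_{\s_H})=0$ there. Each $H^0(\frd_{\lambda,\s})(x)$ is then a scalar on each component, so every operator commutes with it, and (2) holds trivially.
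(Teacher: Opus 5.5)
Your proposal is correct and follows the paper's proof. Part (1) is exactly the piece of Proposition \ref{prop:diagh0} that comes from \cite[Lemma\,5.15]{wang2025specialcycleshtukascategorical}, which the paper likewise flags as independent of Theorem \ref{thm:clifford2}, and part (2) is the mutual commutativity of diagonal $L$-observables from the proof of Proposition \ref{prop:filtrationvialoc}(1). One small point on your $g=1$ aside: ``scalar on each component, so every operator commutes with it'' needs the scalars to agree across components, but this is immediate because (1) already forces $S_\lambda$ to be the whole space, which is spanned by the $\eta_{e,\s}$.
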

\begin{proof}
    (1) follows from Proposition \ref{prop:diagh0} (the part we are using is independent of Theorem \ref{thm:clifford2}). (2) follows from the fact that diagonal $L$-observables are mutually commutative (see the proof of Proposition \ref{prop:filtrationvialoc}(1)).
\end{proof}

\begin{lemma}\label{lem:cliffordprim2}
    Theorem \ref{thm:clifford2} holds when restricted to the subspace $S_{\lambda}\sub H^*(\int_{X}\cF_{\s})$. That is, we have \[\begin{split}[H^1(\fra\frd_{\s}),H^{-1}(\frd_{\lambda,\s})]|_{S_{\lambda}}=\counit_{H^1(C,\frhc_{\s_H})}\circ (\id\otimes H^1(\act_{V_{\lambda_H},\s_H}'))|_{S_{\lambda}} \\ \in \Hom^0(H^1(C,\frhc_{\s_H})\otimes H^1(C,V_{\lambda,\s})(1), \Hom^0(S_{\lambda},H^*(\int_{X}\cF_{\s}))).\end{split}\]
\end{lemma}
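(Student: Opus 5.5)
We start from the primitive Clifford relation, Lemma \ref{lem:cliffordprim}, and pass to cohomology. Take $\alpha\in H^1(C,\frhc_{\s_H})$ and $\beta\in H^1(C,V_{\lambda,\s})(1)$. The cup product $\alpha\cup\beta$ lands in $H^2(C,\frhc_{\s_H}\otimes V_{\lambda,\s})(1)$. Applying $\Gamma(\act_{V_{\lambda_H,\s_H}}\otimes\id)$ sends it to $H^2(C,V_{\lambda,\s})(1)$, and the lemma then gives
\[
[H^1(\fra\frd_{\s}),H^{-1}(\frd_{\lambda,\s})](\alpha\otimes\beta)=H^0(\frd_{\lambda,\s})\bigl(H^2(\act_{V_{\lambda_H},\s_H}\otimes\id)(\alpha\cup\beta)\bigr)
\]
in $H^0\mathbb{O}_{X,\s}$. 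Signs coming from the odd degrees of $\alpha$ and $\beta$ must be tracked here using the super conventions. This step only uses functoriality of the $L$-observable construction in the coefficients, i.e. \eqref{eq:Lobsassoc} and \eqref{eq:Lobsfusion}.

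Next we restrict to $S_\lambda$. By the definition of $S_\lambda$, on $S_\lambda$ the operator $H^0(\frd_{\lambda,\s})(x)$ acts as the scalar $H^2(\counit_{V_{\lambda_H},\s_H})(x)$ for every $x\in H^2(C,V_{\lambda,\s})(1)$. So on $S_\lambda$ the commutator above acts by the scalar
\[
H^2(\counit_{V_{\lambda_H},\s_H})\circ H^2(\act_{V_{\lambda_H},\s_H}\otimes\id)(\alpha\cup\beta).
\]
Here $e_\lambda=0$, so $T_{e_\lambda}$ is the identity. It remains to check that this scalar equals $\counit_{H^1(C,\frhc_{\s_H})}(\alpha\otimes H^1(\act'_{V_{\lambda_H},\s_H})\beta)$.

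That check is a purely representation-theoretic identity, applied to local systems and combined with cup product. The composite $\frhc\otimes V_{\lambda_H}\otimes V_{-w_0\lambda_H}\to V_{\lambda_H}\otimes V_{-w_0\lambda_H}\to k$, given by $\counit_{V_{\lambda_H}}\circ(\act\otimes\id)$, equals the composite through $\id\otimes\act'_{V_{\lambda_H}}$ followed by $\counit_{\frhc}$. This follows from the definition \eqref{eq:act*} and the zigzag identity for $(\unit_{\frhc},\counit_{\frhc})$. Cup product is natural in the coefficient local systems, so the two scalars agree.

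The main obstacle I expect is bookkeeping rather than conceptual. First, the primitive lemma must be read correctly in cohomological degree $0$: the target $H^0\mathbb{O}_{X,\s}$ receives $H^2$ of the coefficient sheaf with twist $(1)$. Second, the Koszul sign from moving the odd class $\alpha$ past $\beta$ (and past the odd shift $\langle 2\rangle$ conventions) has to come out $+1$, which I would fix by matching with the normalization used in \cite[Lemma\,6.16]{wang2025specialcycleshtukascategorical}.
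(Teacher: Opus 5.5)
Your proposal is correct and follows the paper's proof essentially step for step. Both arguments reduce via Lemma \ref{lem:cliffordprim} to the degree-$0$ identity, replace $H^0$ of the diagonal observable on $S_\lambda$ by the scalar given by the counit (this is the definition of $S_\lambda$), and conclude from the commutative square $\counit_{V_{\lambda_H}}\circ(\act_{V_{\lambda_H}}\otimes\id)=\counit_{\frhc}\circ(\id\otimes\act'_{V_{\lambda_H}})$, which is your zigzag identity. The paper leaves implicit the Koszul signs you flag. It also does not need the appeal to \eqref{eq:Lobsassoc} and \eqref{eq:Lobsfusion}, because Lemma \ref{lem:cliffordprim} is already stated at the level of $L$-observables.
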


\begin{proof}
    By Lemma \ref{lem:cliffordprim}, we only need to show \[\counit_{H^1(C,\frhc_{\s_H})}\circ (\id\otimes H^1(\act_{V_{\lambda_H},\s_H}'))|_{S_{\lambda}} = H^0(\frd_{\lambda,\s})\circ H^2(\act_{V_{\lambda_H, \s_H}}\otimes\id)\circ \cup|_{S_{\lambda}}.\] Since $H^0(\frd_{\lambda,\s})|_{S_{\lambda}} = H^2(\counit_{V_{\lambda_H},\s_H})|_{S_{\lambda}}$ by the definition of $S_{\lambda}$, we only need to show \[\counit_{H^1(C,\frhc_{\s_H})}\circ (\id\otimes H^1(\act_{V_{\lambda_H},\s_H}'))= H^2(\counit_{V_{\lambda_H},\s_H})\circ H^2(\act_{V_{\lambda_H, \s_H}}\otimes\id)\circ \cup .\] This follows from the commutative diagram 
    \[\begin{tikzcd}
        \frhc\otimes V_{\lambda_H}\otimes V_{\lambda_H}^* \arrow[r, "\act_{V_{\lambda_H}}\otimes\id"] \arrow[d, "\id\otimes\act_{V_{\lambda_H}}'"'] & V_{\lambda_H}\otimes V_{\lambda_H}^* \ar[d, "\counit_{V_{\lambda_H}}"]  \\
        \frhc\otimes \frhc^* \arrow[r, "\counit_{\frhc}"] &  k
    \end{tikzcd}.\]

\end{proof}

We show by induction that $H^{\geq -m}(\int_X\cF_{\s})\sub S_{\lambda}$ for $m\in \ZZ_{\geq 0}$. The base case $m=0$ is Lemma \ref{lem:propertyofS}(1). Assuming $H^{\geq -m+1}(\int_X\cF_{\s})\sub S_{\lambda}$ holds, we would like to show $H^{-m}(\int_X\cF_{\s})\sub S_{\lambda}$.

Consider a splitting $H^1(C,V_{\lambda,\s})(1)=H^1(C,\frhc_{\s_H}^*)(1)\oplus K$ for the quotient map \[H^1(\act_{V_{\lambda_H},\s_H}'):H^1(C,V_{\lambda,\s})(1)\to H^1(C,\frhc_{\s_H}^*)(1).\] By Lemma \ref{lem:cliffordprim2}, we have \[\begin{split}[H^1(\fra\frd_{\s}),H^{-1}(\frd_{\lambda,\s})|_{H^1(C,\frhc_{\s_H}^*)(1)}]|_{S_{\lambda}}=\counit_{H^1(C,\frhc_{\s_H})}|_{S_{\lambda}} \\ \in \Hom^0( H^1(C,\frhc_{\s_H})\otimes H^1(C,\frhc_{\s_H}^*)(1), \Hom^0(S_{\lambda},H^*(\int_{X}\cF_{\s})))\end{split}.\] From this, a standard argument (see the proof of \cite[Lemma\,6.16]{wang2025specialcycleshtukascategorical}) implies that the natural map \[\wedge^{\leq m}(H^1(C,\frhc_{\s_H}^*)(1))\to H^{\geq -m}(\int_{X,e}\cF_{\s})\] given by acting on the element $\eta_{e,\s}\in H^0(\int_{X,e}\cF_{\s})$ is injective for each $e\in \pi_1(H)$. By Proposition \ref{prop:dim}, this map is also surjective, which implies that $H^{-m}(\int_{X}\cF_{\s})\sub S_{\lambda}$ by Lemma \ref{lem:propertyofS}. This concludes the proof of Theorem \ref{thm:clifford2} when $e_{\lambda}=0$.
\end{proof}

\subsection{Intersection observable}\label{sec:intobs}
Now we introduce and study the intersection observable, which is a key ingredient for the proof of Theorem \ref{thm:main}.
\subsubsection{Definition of the intersection observable}
Consider the map \begin{equation}\rml: \mathbb{O}_{X,\s}\to \End(\mathbb{O}_{X,\s}),~\rml(x)(y)=x\circ y\end{equation}
\begin{equation}\rmr:\mathbb{O}_{X,\s}\to \End(\mathbb{O}_{X,\s}),~\rmr(x)(y) = (-1)^{|x||y|}y\circ x\end{equation} given by left and right multiplication. Define the adjoint map \begin{equation}
    \rmad: \mathbb{O}_{X,\s}\to \End(\mathbb{O}_{X,\s}),~\rmad(x)(y)=[x,y]=x\circ y - (-1)^{|x||y|}y\circ x
.\end{equation}
Consider the multiplication map \begin{equation}
    \rmm: \End(\mathbb{O}_{X,\s})\otimes \End(\mathbb{O}_{X,\s})\to \End(\mathbb{O}_{X,\s}),~\rmm(x\otimes y)=x\circ y
.\end{equation} We will also freely use $\rml$, $\rmr$, $\rmad$, and $\rmm$ to denote the similarly defined maps for $H^*\mathbb{O}_{X,\s}^{\red}$ in place of those for $\mathbb{O}_{X,\s}$.

Consider the canonical element \begin{equation} \unit_{\Gamma(C,V_{-w_0\lambda,\s})} \in H^0(\Gamma(C,V_{\lambda,\s})\otimes\Gamma(C, V_{-w_0\lambda,\s})\langle 2\rangle) \end{equation} induced by the element $\unit_{V_{-w_0\lambda}}\in \Hom^0_{\Rep(\Gc)}(k, V_{\lambda}\otimes V_{-w_0\lambda})$ given by the Satake cycle.

We are interested in the \emph{intersection observable} \begin{equation}
    \Gamma_{\lambda,\s}:= \rmm\circ (\rml\otimes\rmr)(\frd_{\det,\lambda,\s}\otimes \frd_{-w_0\lambda,\s}) (\unit_{\Gamma(C,V_{-w_0\lambda,\s})})\in \End(\mathbb{O}_{X,\s}).
\end{equation} 
By Proposition \ref{prop:reducedLobs}, we have the \emph{reduced intersection observable} \begin{equation}
    H^*\Gamma_{\lambda,\s}^{\red}=\rmm\circ (\rml\otimes\rmr)(H^*(\frd_{\det,\lambda,\s})^{\red}\otimes H^*(\frd_{-w_0\lambda,\s})^{\red}) (\unit_{\Gamma(C,V_{-w_0\lambda,\s})}) \in \End(H^*\mathbb{O}_{X,\s}^{\red})
\end{equation} such that under the isomorphism \eqref{eq:transiso}, we have \[H^*\Gamma_{\lambda,\s}=H^*\Gamma_{\lambda,\s}^{\red}\otimes \Ad_{T_{e_{\lambda}}} \] where \begin{equation}\label{eq:adt} \operatorname{Ad}_{T_{e_\lambda}}
\in
\operatorname{End}(\operatorname{End}(k[\pi_1(H)])),
\qquad
M\longmapsto T_{e_\lambda}MT_{-e_\lambda}.\end{equation}

\subsubsection{Decomposition}
We define the \emph{derivative part} of the (reduced) intersection observable to be \begin{equation}
    \nabla_{\lambda,\s}:= -\rmm\circ (\rml\otimes\rmad)(H^*(\frd_{\det,\lambda,\s})^{\red}\otimes H^*(\frd_{-w_0\lambda,\s})^{\red}) (\unit_{\Gamma(C,V_{-w_0\lambda,\s})})\in \End(H^*\mathbb{O}_{X,\s}^{\red}).
\end{equation} and the \emph{scalar part} of the (reduced) intersection observable to be \begin{equation}
    B_{\lambda,\s}:= \rmm\circ (\rml\otimes\rml)(H^*(\frd_{\det,\lambda,\s})^{\red}\otimes H^*(\frd_{-w_0\lambda,\s})^{\red}) (\unit_{\Gamma(C,V_{-w_0\lambda,\s})})\in \End(H^*\mathbb{O}_{X,\s}^{\red}).
\end{equation} It is clear that $H^*\Gamma_{\lambda,\s}^{\red}=\nabla_{\lambda,\s}+B_{\lambda,\s}$.

The scalar part $B_{\lambda,\s}$ is described by the following proposition.
\begin{prop}\label{prop:scalarpart}
    We have $B_{\lambda,\s}=(-1)^{d_{\lambda_H}}(2g-2)b_{\lambda}\in k\sub \End(H^*\mathbb{O}_{X,\s}^{\red})$ where $b_{\lambda}$ is given in \eqref{eq:bformula}.
\end{prop}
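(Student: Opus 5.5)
The plan is to recognize $B_{\lambda,\s}$ as the $L$-observable attached to a single global special cohomological correspondence, namely the composite $\frd_{\det,\lambda}\circ\frd_{-w_0\lambda}$ pulled back along the unit $\unit_{V_{-w_0\lambda}}$, and then to compute it using the local identity of Proposition \ref{prop:scalarpartloc}.

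\textbf{Step 1: rewrite $B_{\lambda,\s}$ as one $L$-observable.} Since $\rml$ is multiplicative, $\rmm\circ(\rml\otimes\rml)(x\otimes y)=\rml(x\circ y)$. So $B_{\lambda,\s}$ is left multiplication by the element
\[
\rmm\circ(\frd_{\det,\lambda,\s}\otimes\frd_{-w_0\lambda,\s})(\unit_{\Gamma(C,V_{-w_0\lambda,\s})}).
\]
The canonical element $\unit_{\Gamma(C,V_{-w_0\lambda,\s})}$ is $\D_{C,!}$ applied to the fibrewise unit $\unit_{V_{-w_0\lambda},\s}\in H^0(C,V_{\lambda,\s}\otimes V_{-w_0\lambda,\s})$, up to the twist $\langle 2\rangle$ and a sign to be tracked. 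By \eqref{eq:Lobsassocfusion}, the element above is therefore $(\frd_{\det,\lambda}\circ\frd_{-w_0\lambda})_{\s}$ evaluated on the class of $\unit_{V_{-w_0\lambda},\s}$, that is, the $L$-observable of $\unit_{V_{-w_0\lambda}}^*(\frd_{\det,\lambda}\circ\frd_{-w_0\lambda})$ with kernel $\Sat(k)$, evaluated on $1\in H^0(C,\uk_C)$.

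\textbf{Step 2: apply the local computation.} By \eqref{eq:ltogassoc}, together with $\glob(\frd^{\loc}_{\lambda})=\frd_\lambda$ and $\glob(\frd^{\loc}_{\det,\lambda})=\frd_{\det,\lambda}$, and the compatibility of $\glob$ with pullback \eqref{eq:pullbackloccor}, this correspondence equals $\glob$ of the local class $\unit_{V_{-w_0\lambda}}^*(\frd^{\loc}_{\det,\lambda}\circ\frd^{\loc}_{-w_0\lambda})$. Proposition \ref{prop:scalarpartloc} identifies that local class as $(-1)^{d_{\lambda_H}+1}b_\lambda\hbar$.

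\textbf{Step 3: identify $\glob(\hbar)$.} The class $\hbar=c_1(T_D)$, globalized along the leg, becomes the correspondence $c_1(T_C)$ on the trivial Hecke stack $\Bun_G\times C$. Its $L$-observable sends $1\in H^0(C)$ to
\[
\int_C c_1(T_C)=\deg T_C=2-2g,
\]
acting as a scalar on $\int_X\cF_\s$. The scalar therefore appears in every component, and in the reduced algebra $B_{\lambda,\s}$ is the scalar $(-1)^{d_{\lambda_H}+1}(2-2g)b_\lambda=(-1)^{d_{\lambda_H}}(2g-2)b_\lambda$. Here $T_{e_\lambda}T_{-e_\lambda}=1$ absorbs the component shifts.

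\textbf{Main obstacle.} The delicate point is the bookkeeping of signs and normalizations in Steps 1 and 3. Specifically, one must check that:
\begin{itemize}
\item the global unit $\unit_{\Gamma(C,V_{-w_0\lambda,\s})}$ is exactly the Gysin image of the fibrewise unit, with no extra Koszul sign from the parity of $\langle 2\rangle$;
\item the globalization of $\hbar$ is $+c_1(T_C)$ rather than its negative.
\end{itemize}
I would verify both by testing on $\lambda=0$ (trivially) and by comparing with the analogous computation in \cite{wang2025specialcycleshtukascategorical} for $\GL_n$.
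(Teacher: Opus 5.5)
Your proposal is correct and follows the paper's proof essentially step for step. You write $\rmm\circ(\rml\otimes\rml)$ as $\rml$ of a product, express the unit as the Gysin image $\D_{C,!}$ of the fibrewise unit, and use \eqref{eq:Lobsassocfusion} and \eqref{eq:ltogassoc} to reduce to $\glob$ of the local class in Proposition \ref{prop:scalarpartloc}; then $\mathrm{Coor}^*(\hbar)=c_1(T_C)$ turns $\hbar$ into the factor $2-2g$.
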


\begin{proof}
    Consider $\unit_{V_{-w_0\lambda,\s}}\in H^0(C, V_{\lambda,\s}\otimes V_{-w_0\lambda,\s})$ and $1 \in H^0(C,\uk_{C})\cong k$. We compute
    \[\begin{split}
        & \rmm\circ (\rml\otimes\rml)(\frd_{\det,\lambda,\s}\otimes \frd_{-w_0\lambda,\s}) (\unit_{\Gamma(C,V_{-w_0\lambda,\s})}) \\
        = & \rml(\rmm\circ (\frd_{\det,\lambda,\s}\otimes \frd_{-w_0\lambda,\s})(\unit_{\Gamma(C,V_{-w_0\lambda,\s})})) \\ 
        = & \rml(\rmm\circ (\frd_{\det,\lambda,\s}\otimes \frd_{-w_0\lambda,\s})\circ \D_{C,!}(\unit_{V_{-w_0\lambda,\s}})) \\
        = & \rml((\frd_{\det,\lambda}\circ \frd_{-w_0\lambda})_{\s}(\unit_{V_{-w_0\lambda,\s}})) \\ 
        = & \rml(\glob(\unit_{V_{-w_0\lambda}}^*(\frd_{\det,\lambda}^{\loc}\circ \frd_{-w_0\lambda}^{\loc}) )_{\s}(1) ) \\
        = & \rml(\glob((-1)^{d_{\lambda_H} +1}b_{\lambda}\hbar)_{\s}(1)) \\
        = & \rml((-1)^{d_{\lambda_H} +1}(2-2g)b_{\lambda}) \\
        = & (-1)^{d_{\lambda_H}}(2g-2)b_{\lambda}.
    \end{split}\]
Here, the third equality follows from \eqref{eq:Lobsassocfusion}, the fourth equality follows from \eqref{eq:ltogassoc}, the fifth equality follows from Proposition \ref{prop:scalarpartloc}, the sixth equality follows from the fact that the natural coordinate map $\mathrm{Coor}: C\to \mathbb{B}\Aut(D)$ satisfies $\mathrm{Coor}^*(\hbar)=c_1(T_C)\in H^2(C,\uk_C)$ where $T_C$ is the tangent bundle of $C$, and the remaining equalities are straightforward.
\end{proof}

For the derivative part $\nabla_{\lambda,\s}$, we first note the following.
\begin{lemma}\label{lem:derivativepartfil}
    The operator $\nabla_{\lambda,\s}\in \End(H^*\mathbb{O}_{X,\s}^{\red})$ preserves the Ran filtration $F_{\bullet}^{\Ran}H^*\mathbb{O}_{X,\s}^{\red}$ defined in \S\ref{sec:deffil}. Moreover, the induced operator on the associated graded algebra \[\Gr^{\Ran}_{\bullet}\nabla_{\lambda,\s}\in \End(\Gr^{\Ran}_{\bullet}H^*\mathbb{O}_{X,\s}^{\red})\] is a $k$-linear derivation.
\end{lemma}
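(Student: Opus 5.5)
We unwind the definition of $\nabla_{\lambda,\s}$ as a finite sum. Write the canonical element as a finite sum of pure tensors,
\[
\unit_{\Gamma(C,V_{-w_0\lambda,\s})}=\sum_j a_j\otimes b_j,
\qquad a_j\in H^*(C,V_{\lambda,\s}),\quad b_j\in H^*(C,V_{-w_0\lambda,\s}).
\]
Set $x_j:=H^*(\frd_{\det,\lambda,\s})^{\red}(a_j)$ and $y_j:=H^*(\frd_{-w_0\lambda,\s})^{\red}(b_j)$. Then, up to the Koszul signs coming from the super structure,
\[
\nabla_{\lambda,\s}=-\sum_j \rml(x_j)\circ\rmad(y_j).
\]

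The first input is the filtration level of each factor. By Proposition \ref{prop:filtrationvialoc}(1), each $y_j$ lies in $F_0^{\Ran}$, since the diagonal correspondence has degree $0$. By Proposition \ref{prop:filtrationvialoc}(2), each $x_j$ lies in $F_1^{\Ran}$, since the Lefschetz correspondence has degree $2$.

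Preservation of the filtration then follows from two facts:
\begin{itemize}
\item the filtration is multiplicative, so $\rml(x_j)$ sends $F_i$ into $F_{i+1}$;
\item by \eqref{eq:Ranfilcommutator}, $\rmad(y_j)$ sends $F_i$ into $F_{i-1}$.
\end{itemize}
Hence each summand sends $F_i^{\Ran}$ into $F_i^{\Ran}$, and so does $\nabla_{\lambda,\s}$.

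For the derivation property, we compute the induced map on $\Gr^{\Ran}_i$. The summand $\rml(x_j)\circ\rmad(y_j)$ induces
\[
z\longmapsto \bar x_j\cdot \overline{[y_j,z]},
\]
where $\bar x_j\in\Gr_1$ is the class of $x_j$, and $\overline{[y_j,z]}\in\Gr_{i-1}$ is the leading symbol of the commutator. The map $z\mapsto\overline{[y_j,\cdot]}$ is well defined on $\Gr$ and equals the Poisson-type bracket $\{\bar y_j,-\}$ on the associated graded. The associated graded is the supercommutative algebra of Proposition \ref{prop:ABLobs}, and by \eqref{eq:Ranfilcommutator} it carries a degree $-1$ bracket induced by the commutator. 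This bracket is a super-derivation in each argument, by the Leibniz rule $[y,zw]=[y,z]w\pm z[y,w]$ passed to the graded level.

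Multiplying a derivation by the fixed element $\bar x_j$ from the left again gives a derivation, because $\Gr$ is supercommutative. The sign bookkeeping is as follows. Left multiplication by an odd element composed with an odd derivation yields an even derivation, so the parities must match up for the sum to be a $k$-linear (even) derivation. Each $x_j\otimes y_j$ has total parity equal to that of $\unit$, which is even after the $\langle 2\rangle$ shift, so the parities do match.

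Summing over $j$ shows that $\Gr^{\Ran}\nabla_{\lambda,\s}$ is a derivation. On $\Gr$ it is given by
\[
\Gr^{\Ran}\nabla_{\lambda,\s}=-\sum_j \bar x_j\,\{\bar y_j,-\}.
\]

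The main obstacle is the sign bookkeeping in the super setting. One must check that the Koszul signs in $\rmr$ and $\rmad$, combined with the parities of $a_j$ and $b_j$, produce an even derivation rather than an odd one or a mixture. One must also confirm that the bracket $\{\,,\}$ on $\Gr$ is genuinely a biderivation; this is where the identification $\Gr\cong\wedge^\bullet(\cdots)$ from Proposition \ref{prop:ABLobs} is used. I would verify this directly on generators using the Clifford relation of Theorem \ref{thm:clifford}.
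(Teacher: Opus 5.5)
Your proposal is correct and follows the paper's proof. In both, Proposition \ref{prop:filtrationvialoc} places the Lefschetz observables in $F_1^{\Ran}$ and the diagonal observables in $F_0^{\Ran}$, and multiplicativity together with \eqref{eq:Ranfilcommutator} then gives both filtration-preservation and the derivation property on $\Gr^{\Ran}_{\bullet}$, which you spell out in more detail than the paper's one-line ``this proves both claims'' (your final check via Theorem \ref{thm:clifford} is unnecessary, since supercommutativity of $\Gr^{\Ran}_{\bullet}$ already follows from \eqref{eq:Ranfilcommutator} and the Leibniz rule for the supercommutator holds in any associative superalgebra).
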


\begin{proof}
    By Proposition \ref{prop:filtrationvialoc}, we have \[\im(H^*(\frd_{\det,\lambda,\s})^{\red})\sub F_1^{\Ran}H^*\mathbb{O}_{X,\s}^{\red}\] and \[\im(H^*(\frd_{-w_0\lambda,\s})^{\red})\sub F_0^{\Ran}H^*\mathbb{O}_{X,\s}^{\red}.\] By \eqref{eq:Ranfilcommutator}, we know \[\rml(\im(H^*(\frd_{\det,\lambda,\s})^{\red}))(F_i^{\Ran}H^*\mathbb{O}_{X,\s}^{\red})\sub F_{i+1}^{\Ran}H^*\mathbb{O}_{X,\s}^{\red}\] and \[\rmad(\im(H^*(\frd_{-w_0\lambda,\s})^{\red}))(F_i^{\Ran}H^*\mathbb{O}_{X,\s}^{\red})\sub F_{i-1}^{\Ran}H^*\mathbb{O}_{X,\s}^{\red}\] for any $i\in \ZZ$. This proves both claims.
\end{proof}

Then we have the following description of $\Gr^{\Ran}_{\bullet}\nabla_{\lambda,\s}$.
\begin{prop}\label{prop:derivativepart}
    We have \[\Gr^{\Ran}_{\bullet}\nabla_{\lambda,\s}=\nabla_{E_{\lambda,\s}} \in \End(\wedge^{\bl}(H^1(C,\frhc_{\s_H}^*)(1)\oplus H^1(C,\frhc_{\s_H})))\] where we are using the isomorphism $\wedge^{\bl}(H^1(C,\frhc_{\s_H}^*)(1)\oplus H^1(C,\frhc_{\s_H}))\isom \Gr^{\Ran}_{\bullet}H^*\mathbb{O}_{X,\s}^{\red}$ in Proposition \ref{prop:ABLobs}, and $\nabla_{E_{\lambda,\s}}$ is the $k$-linear derivation on $\wedge^{\bl}(H^1(C,\frhc_{\s_H}^*)(1)\oplus H^1(C,\frhc_{\s_H}))$ induced by \[E_{\lambda,\s}=(0, (-1)^{d_{\lambda_H} + 1}\epsilon_{\lambda}\id)\in \End(H^1(C,\frhc_{\s_H}^*)(1)\oplus H^1(C,\frhc_{\s_H}))\] where $\epsilon_{\lambda}$ is given in \eqref{eq:epsilonformula}.
\end{prop}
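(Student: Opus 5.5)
Since $\Gr^{\Ran}_{\bullet}\nabla_{\lambda,\s}$ is a $k$-linear derivation by Lemma \ref{lem:derivativepartfil}, and by Proposition \ref{prop:ABLobs} the associated graded algebra is generated by its degree-one piece $H^1(C,\frhc_{\s_H}^*)(1)\oplus H^1(C,\frhc_{\s_H})$, it suffices to compute $\Gr^{\Ran}_{\bullet}\nabla_{\lambda,\s}$ on these generators. Here the first summand sits in $\Gr_0$, realized by $\overline{H^{-1}(\frd_{\mu,\s})}^{\red}$ for some $0\neq\mu$ with $e_\mu=0$. The second sits in $\Gr_1$, realized by $H^1(\fra\frd_{\s})^{\red}$. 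For a generator $y$, the value $\nabla_{\lambda,\s}(y)=-\sum \pm\, \frd_{\det}(a_j)\,[\frd_{-w_0\lambda}(b_j),y]$ is determined by the commutator of $y$ with the diagonal observables, where $\sum a_j\otimes b_j$ denotes $\unit_{\Gamma(C,V_{-w_0\lambda,\s})}$.

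\emph{Generators in $\Gr_0$.} If $y$ is a diagonal observable, then $[\frd_{-w_0\lambda}(b),y]=0$, because diagonal $L$-observables mutually commute (see the proof of Proposition \ref{prop:filtrationvialoc}(1)). Hence $\nabla_{\lambda,\s}(y)=0$, which matches the first component $0$ of $E_{\lambda,\s}$.

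\emph{Generators in $\Gr_1$.} Take $y=H^1(\fra\frd_{\s})^{\red}(\xi)$. By Lemma \ref{lem:cliffordprim}, applied with $-w_0\lambda$ in place of $\lambda$, the commutator $[\fra\frd_{\s},\frd_{-w_0\lambda,\s}]$ equals $\frd_{-w_0\lambda,\s}\circ\Gamma(\act\otimes\id)\circ\cup$. Substituting this into $\nabla_{\lambda,\s}(y)$ gives
\[
\sum_j \pm\,\frd_{\det,\lambda,\s}(a_j)\cdot \frd_{-w_0\lambda,\s}\bigl(\act(\xi\cup b_j)\bigr).
\]
Then \eqref{eq:Lobsassocfusion} together with \eqref{eq:ltogassoc} converts this product of global observables on $C\times C$ into the globalization of the composite local correspondence. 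The cup product with $\xi$ commutes past the pullback along the diagonal $\D_{C,!}$ (projection formula), so one obtains
\[
\glob\Bigl((\id\otimes\unit_{V_{-w_0\lambda}})^*\bigl(\frd^{\loc}_{\det,\lambda}\circ(\act_{V_{-w_0\lambda_H}}\boxtimes\id)^*\frd^{\loc}_{-w_0\lambda}\bigr)\Bigr)_{\s}(\xi).
\]
Proposition \ref{prop:localunit} identifies the local correspondence inside as $(-1)^{d_{\lambda_H}+1}\epsilon_\lambda\,\fra\frd^{\loc}$. Hence $\nabla_{\lambda,\s}(y)=(-1)^{d_{\lambda_H}+1}\epsilon_\lambda\, H^1(\fra\frd_{\s})^{\red}(\xi)$ modulo $F_0$, which matches the second component of $E_{\lambda,\s}$. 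The reduction to $H^*\mathbb{O}^{\red}$ and the translation factors $T_{e_\lambda}T_{-e_\lambda}$ cancel by Proposition \ref{prop:reducedLobs}.

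The main obstacle is the bookkeeping in the $\Gr_1$ step. One must check three things there: the sign conventions of $\rmad$ and of the super-commutator, the overall minus sign in the definition of $\nabla_{\lambda,\s}$, and the placement of $\xi$ against the $\frd_{-w_0\lambda}$ factor rather than the $\frd_{\det}$ factor when passing from $C\times C$ to the diagonal. Together these must reproduce exactly the sign $(-1)^{d_{\lambda_H}+1}$ and the order of composition appearing in Proposition \ref{prop:localunit}. It also has to be verified that terms where the derivative lands in $F_0$ do not contribute in $\Gr_1$.
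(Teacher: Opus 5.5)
Your proposal is correct and follows essentially the same route as the paper. Both arguments reduce to the reduced algebra via Proposition~\ref{prop:reducedLobs} and check the derivation $\Gr^{\Ran}_{\bullet}\nabla_{\lambda,\s}$ on generators. The $H^1(C,\frhc_{\s_H}^*)(1)$ generators are killed by commutativity of the diagonal observables. The $H^1(C,\frhc_{\s_H})$ generators are handled by the chain: global commutator (Proposition~\ref{prop:globalcommutator}, packaged as Lemma~\ref{lem:cliffordprim}), projection formula for $\D_{C,!}$, \eqref{eq:Lobsassocfusion}, \eqref{eq:ltogassoc}, and Proposition~\ref{prop:localunit}.

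The differences are cosmetic. The paper's chain gives the exact identity $\nabla_{\lambda,\s}(\fra\frd_{\s}(\xi))=(-1)^{d_{\lambda_H}+1}\epsilon_{\lambda}\,\fra\frd_{\s}(\xi)$, not just an identity modulo $F_0^{\Ran}$. Also, $\D_{C,!}$ is a Gysin pushforward, not a pullback.
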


\begin{proof}
    By Proposition \ref{prop:reducedLobs}, we can assume that $|\pi_1(H)|=1$ without loss of generality. We only need to show that $\Gr^{\Ran}_{\bullet}\nabla_{\lambda,\s}$ and $\nabla_{E_{\lambda,\s}}$ coincide on the generators in $H^1(C,\frhc_{\s_H}^*)(1)$ and $H^1(C,\frhc_{\s_H})$.

    For the first part, note that $\im(H^*(\frd_{-w_0\lambda,\s}))\sub F_0^{\Ran}H^*\mathbb{O}_{X,\s}$, which implies that $\nabla_{\lambda,\s}|_{F_0^{\Ran}H^*\mathbb{O}_{X,e,\s}}=0$. Therefore, $\Gr^{\Ran}_{\bullet}\nabla_{\lambda,\s}|_{H^1(C,\frhc_{\s_H}^*)(1)}=0=\nabla_{E_{\lambda,\s}}|_{H^1(C,\frhc_{\s_H}^*)(1)}$. 

    For the second part, note that  \[\begin{split}
    \nabla_{\lambda,\s}(\fra\frd_{\s}(-)) & = -\rmm\circ (\rml\otimes\rmad)(\frd_{\det,\lambda,\s}\otimes \frd_{-w_0\lambda,\s}) (\unit_{\Gamma(C,V_{-w_0\lambda,\s})})(\fra\frd_{\s}(-)) \\
    & = - \rmm\circ (\frd_{\det,\lambda,\s}\otimes [\frd_{-w_0\lambda,\s},\fra\frd_{\s}]) (\unit_{\Gamma(C,V_{-w_0\lambda,\s})}\otimes -) \\
    & = - \rmm\circ (\frd_{\det,\lambda,\s}\otimes [\frd_{-w_0\lambda},\fra\frd]_{\s}) (\unit_{\Gamma(C,V_{-w_0\lambda,\s})}\cup -) \\
    & = \rmm\circ (\frd_{\det,\lambda,\s}\otimes ((\act_{V_{-w_0\lambda_H}}\boxtimes \id)^*\frd_{-w_0\lambda})_{\s}) (\unit_{\Gamma(C,V_{-w_0\lambda,\s})}\cup -) \\
    & =  \rmm\circ (\frd_{\det,\lambda,\s}\otimes ((\act_{V_{-w_0\lambda_H}}\boxtimes \id)^*\frd_{-w_0\lambda})_{\s})\circ \D_{C,!}(\unit_{V_{-w_0\lambda,\s}}\cup -) \\
    & =  (\frd_{\det,\lambda}\circ (\act_{V_{-w_0\lambda_H}}\boxtimes \id)^*\frd_{-w_0\lambda})_{\s} (\unit_{V_{-w_0\lambda,\s}}\cup -) \\
    & =  \glob ((\id\otimes\unit_{V_{-w_0\lambda}})^*(\frd_{\det,\lambda}^{\loc}\circ (\act_{V_{-w_0\lambda_H}}\boxtimes \id)^*\frd_{-w_0\lambda}^{\loc}))_{\s} (-) \\
    & = \glob ((-1)^{d_{\lambda_H} + 1 }\epsilon_{\lambda}\fra\frd^{\loc})_{\s} (-) \\
    & = (-1)^{d_{\lambda_H} + 1 }\epsilon_{\lambda}\fra\frd_{\s}(-)
    \end{split}.\]
    Here, the fourth equality follows from Proposition \ref{prop:globalcommutator}, the sixth equality follows from \eqref{eq:Lobsassocfusion}, the seventh equality follows from \eqref{eq:ltogassoc}, the eighth equality follows from Proposition \ref{prop:localunit}, and the remaining equalities are straightforward. This implies that $\Gr^{\Ran}_{\bullet}\nabla_{\lambda,\s}|_{H^1(C,\frhc_{\s_H})}=(-1)^{d_{\lambda_H} + 1 }\epsilon_{\lambda}\id=E_{\lambda,\s}|_{H^1(C,\frhc_{\s_H})}$, which concludes the proof of the proposition.
\end{proof}

Combining Proposition \ref{prop:scalarpart} and Lemma \ref{lem:derivativepartfil}, we know that $H^*\Gamma_{\lambda,\s}$ preserves the Ran filtration \[F_{\bullet}^{\Ran}H^*\mathbb{O}_{X,\s}=(F_{\bullet}^{\Ran}H^*\mathbb{O}_{X,\s}^{\red})\otimes \End(k[\pi_1(H)]),\] and we have the following description of the associated graded operator $\Gr^{\Ran}_{\bullet}H^*\Gamma_{\lambda,\s}$:

\begin{prop}\label{prop:gammagraded}
    We have \[\begin{split}
        \Gr^{\Ran}_{\bullet}H^*\Gamma_{\lambda,\s}=(\nabla_{E_{\lambda,\s}} + (-1)^{d_{\lambda_H}}(2g-2)b_{\lambda}\id)\otimes \Ad_{T_{e_{\lambda}}}\\
        \in \End(\wedge^{\bl}(H^1(C,\frhc_{\s_H}^*)(1)\oplus H^1(C,\frhc_{\s_H}))\otimes \End(k[\pi_1(H)]))
    \end{split}\] where $T_{e_{\lambda}}$ is defined in \eqref{eq:shiftobs}.

\end{prop}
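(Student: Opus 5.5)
The plan is to assemble the statement from the decomposition of the reduced intersection observable together with the two computations already done. We then transport the result through the isomorphism \eqref{eq:transiso}.

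\textbf{Step 1.} By definition,
\[
H^*\Gamma_{\lambda,\s}^{\red}=\nabla_{\lambda,\s}+B_{\lambda,\s}.
\]
By Proposition \ref{prop:scalarpart}, $B_{\lambda,\s}$ is the scalar $(-1)^{d_{\lambda_H}}(2g-2)b_{\lambda}$. A scalar preserves every filtration, and its associated graded is the same scalar times the identity.

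\textbf{Step 2.} By Lemma \ref{lem:derivativepartfil}, $\nabla_{\lambda,\s}$ preserves $F^{\Ran}_\bullet$, and $\Gr^{\Ran}_\bullet\nabla_{\lambda,\s}$ is a derivation. Proposition \ref{prop:derivativepart} identifies this derivation with $\nabla_{E_{\lambda,\s}}$ under Proposition \ref{prop:ABLobs}. Adding Steps 1 and 2, $H^*\Gamma^{\red}_{\lambda,\s}$ preserves the Ran filtration, and
\[
\Gr^{\Ran}_\bullet H^*\Gamma^{\red}_{\lambda,\s}=\nabla_{E_{\lambda,\s}}+(-1)^{d_{\lambda_H}}(2g-2)b_\lambda\,\id .
\]

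\textbf{Step 3.} We pass from the reduced algebra to the full one. By Proposition \ref{prop:reducedLobs} applied to $\frd_{\det,\lambda}$ and $\frd_{-w_0\lambda}$, each observable factors as (reduced part)$\otimes T_{e_\lambda}$, respectively (reduced part)$\otimes T_{-e_\lambda}$, under \eqref{eq:transiso}. We use that $e_{-w_0\lambda}=-e_\lambda$ in $\pi_1(H)$.

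Hence left multiplication by the first factor and right multiplication by the second give
\[
H^*\Gamma_{\lambda,\s}=H^*\Gamma^{\red}_{\lambda,\s}\otimes \Ad_{T_{e_\lambda}},
\]
as recorded after the definition of the reduced intersection observable. The signs $(-1)^{|x||y|}$ in $\rmr$ cause no trouble, since the translation observables lie in degree $0$.

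The filtration on $H^*\mathbb{O}_{X,\s}$ is defined as $F^{\Ran}_\bullet H^*\mathbb{O}^{\red}_{X,\s}\otimes\End(k[\pi_1(H)])$, and $\Ad_{T_{e_\lambda}}$ is an automorphism of the second factor. Therefore $H^*\Gamma_{\lambda,\s}$ preserves this filtration, and taking associated graded commutes with tensoring by $\Ad_{T_{e_\lambda}}$. This yields the stated formula.

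\textbf{Main obstacle.} The only real content beyond bookkeeping is Step 3. There one must check that the factorization $\Gamma=\Gamma^{\red}\otimes\Ad_{T_{e_\lambda}}$ is genuinely compatible with the definitions of $\rml$ and $\rmr$ on the tensor product algebra. In particular, the contraction against $\unit_{\Gamma(C,V_{-w_0\lambda,\s})}$ must only touch the reduced factors. This holds because the reduced observables and the $T_e$ commute, by Proposition \ref{prop:reducedLobs}.
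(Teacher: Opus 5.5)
Your proposal is correct and follows essentially the same route as the paper. The paper also combines Proposition~\ref{prop:scalarpart}, Lemma~\ref{lem:derivativepartfil} and Proposition~\ref{prop:derivativepart} on the reduced algebra, and then transports the result through the factorization $H^*\Gamma_{\lambda,\s}=H^*\Gamma_{\lambda,\s}^{\red}\otimes\Ad_{T_{e_\lambda}}$ coming from Proposition~\ref{prop:reducedLobs}; your observation that $e_{-w_0\lambda}=-e_\lambda$ holds because the Weyl group acts trivially on $\pi_1(H)$.
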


The proof is straightforward.

\section{Diagonal cycle and intersection number}\label{sec:diagcycle}
In this section, we work towards the proof of Theorem \ref{thm:main} and Theorem \ref{thm:intnondegintro}. 
\subsection{Cohomology of Shtukas}\label{sec:cohshtuka}
We first recall some background on the cohomology of Shtukas.

We use $\Loc_{\Gc}^{\res}$ to denote the moduli stack of $\Gc$-local systems with restricted variation, and $\Loc_{\Gc}^{\arith}$ to denote the moduli stack of Weil $\Gc$-local systems as considered in \cite{arinkin2022stacklocalsystemsrestricted}. There is a forgetful map $\Loc_{\Gc}^{\arith}\to \Loc_{\Gc}^{\res}$.

Consider the map $l_I:\Sht_{G,I}\to C^I$ remembering the legs. For each $\lambda_I\in X_*(T)_+^I$, we have the intersection cohomology sheaf $\IC_{\lambda_I}\in \Shv(\Sht_{G,I})$ normalized to be perverse along fibers of $l_I$. Throughout this section, we assume that $\Sht_{G,\leq \lambda_I}\neq \varnothing$.

The cohomology of Shtukas is defined to be $l_{I,!} \IC_{\lambda_I}\in \Shv(C^I)$, or sometimes, we mean its global section $\Gamma_c(\Sht_{G,I}, \IC_{\lambda_I})$. Both of them carry an action by the algebra of excursion operators $\cO(\Loc_{\Gc}^{\arith})$.

Let $\s\in \Loc_{\Gc}^{\arith}(k)$ be a geometrically strongly irreducible Weil $\Gc$-local system as defined in Definition \ref{defn:stronglyirreducible}. In this case, the underlying reduced substack of $\Loc_{\Gc}^{\res}$ containing $\s$ is isomorphic to $\mathbb{B}Z(\Gc)$. Also, the connected component of $\Loc_{\Gc}^{\arith}$ containing $\s$ (we denote it by $(\Loc_{\Gc}^{\arith})_{\s}$) is isomorphic to $\mathbb{B}Z(\Gc)$.

We use $(l_{I,!}\IC_{\lambda_I})_{\s}\in \Shv(C^I)$ (resp. $\Gamma_c(\Sht_{G,I}, \IC_{\lambda_I})_{\s}$) to denote the direct summand of $l_{I,!}\IC_{\lambda_I}$ (resp. $\Gamma_c(\Sht_{G,I}, \IC_{\lambda_I})$) on which $\cO(\Loc_{\Gc}^{\arith})$ acts through $\cO((\Loc_{\Gc}^{\arith})_{\s})$. This is the $\s$-isotypic component of the cohomology of shtukas, which admits the following description:

\begin{prop}\label{prop:cohshtuka}
If $\s$ is automorphic, there is an isomorphism $\xi_{\s,I}: V_{\lambda_I,\s}\isom (l_{I,!}\IC_{\lambda_I})_{\s}$.
\end{prop}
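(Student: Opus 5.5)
The isomorphism will come from the spectral decomposition of the Shtuka cohomology, together with the automorphic Hecke eigensheaf and the categorical-trace description of Shtuka cohomology.

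\textbf{Trace formula.} By \cite{arinkin2022automorphicfunctionstracefrobenius} and the trace interpretation in \cite{wang2025specialcycleshtukascategorical}, the cohomology $l_{I,!}\IC_{\lambda_I}$ is the Frobenius-twisted categorical trace of the Hecke functor $H_{V_{\lambda_I}}$ acting on $\Shv(\Bun_G)$. Equivalently, it is the Frobenius trace of the spectral action of $V_{\lambda_I}$ on $\QCoh$-type sheaves over $\Loc_{\Gc}^{\res}$, evaluated through the geometric Langlands equivalence of \cite{gaitsgory2025geometriclanglandspositivecharacteristic}. This identifies the $\cO(\Loc_{\Gc}^{\arith})$-action with the excursion action. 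Hence the $\s$-isotypic summand is computed by localizing to the connected component $(\Loc_{\Gc}^{\arith})_{\s}\cong \mathbb{B}Z(\Gc)$, which uses geometric strong irreducibility.

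\textbf{Localization.} The formal completion of $\Loc_{\Gc}^{\res}$ at $\s$ has reduced part $\mathbb{B}Z(\Gc)$. Its Frobenius-fixed locus contributes $\Gamma$ of the tautological local system $V_{\lambda_I,\s}$ on $C^I$, tensored with the trace of Frobenius on the fiber of the automorphic category at $\s$. The next step is to show that this fiber is one-dimensional per character of $Z(\Gc)$. For this I would use the eigensheaf $\cF_{\s}$ of \eqref{eq:eigensheaf}, obtained from automorphy, and the skyscraper description already used in Proposition \ref{prop:dim}. The Frobenius trace on the skyscraper at $\s$, twisted by the $\mathbb{B}Z(\Gc)$ factor, then picks out exactly the component of $\Bun_G$ matching the central character of $V_{\lambda_I}$. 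This produces a canonical map
\[
V_{\lambda_I,\s}\to (l_{I,!}\IC_{\lambda_I})_{\s},
\]
which I take as $\xi_{\s,I}$. It can be realized concretely as the Hecke eigen-isomorphism $H_{V_{\lambda_I}}\cF_{\s}\cong \cF_{\s}\boxtimes V_{\lambda_I,\s}$ composed with the trace.

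\textbf{Main obstacle.} The hard step is showing that this map is an isomorphism as sheaves on $C^I$, not only on global sections. This requires that the contributions of the derived (nilpotent) directions of $\Loc_{\Gc}^{\res}$ at $\s$ cancel. Because the centralizer is exactly $Z(\Gc)$, the tangent complex at $\s$ is $H^*(C,\frgc_{\s})[1]$, with $H^0=H^2=0$. The Frobenius-twisted trace of the induced derived structure is then the Euler characteristic of $\Sym$ of a Frobenius-fixed-point-free complex, which should contribute $1$. Concretely, I would first apply Lefschetz/Lafforgue's Frobenius-fixed-point argument. It shows that the Frobenius-twisted trace of $\QCoh$ of the formal neighborhood of a Frobenius-isolated point is the structure sheaf of the classical fixed point. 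Isolation holds because Frobenius acts on $H^1(C,\frgc_{\s})$ with weights $\neq 0$. Finally, I would check fiberwise over each point of $C^I$ using the compatibility of traces with restriction to legs. This reduces the statement to the rank comparison $\operatorname{rank}V_{\lambda_I}$ against the stalk dimension.
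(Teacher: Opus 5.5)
\textbf{Gap: you never show that your map $\xi_{\s,I}$ is injective.}

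Your localization to $(\Loc_{\Gc}^{\arith})_{\s}\cong\mathbb{B}Z(\Gc)$ uses $H^0=H^2=0$ and the fact that Frobenius has no weight-zero eigenvalues on $H^1(C,\frgc_{\s})$. What it computes is the \emph{target}: it shows that $(l_{I,!}\IC_{\lambda_I})_{\s}$ is abstractly isomorphic to $V_{\lambda_I,\s}$. The paper imports exactly this from \cite[Main Theorem\,0.3.10]{arinkin2022automorphicfunctionstracefrobenius}. If a bare abstract isomorphism were the goal, that would suffice.

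However, you then take $\xi_{\s,I}$ to be the eigen-isomorphism for $\cF_{\s}$ composed with the trace, and say the hard step is proving that this map is an isomorphism. Your last step, a stalkwise rank comparison, cannot do that unless you already know the map is injective or surjective, and nothing in the proposal provides this. Mere nonvanishing would not be enough either, since $V_{\lambda_I,\s}$ need not be irreducible when $\s$ is only strongly irreducible.

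Your implicit identification of the eigensheaf map with the spectral identification is also not right as stated. Near an isolated transversal Frobenius fixed point, the trace of Frobenius on the category is indeed $\cO$ of the fixed point. But the class of the Frobenius-equivariant skyscraper inside it is not $1$: it is a Koszul/Lefschetz determinant of $1-\Frob$ on the tangent directions, which is where factors like $L(1,\frhc_{\s_H})$ come from. To complete your route you would have to prove this compatibility and show that the factor is nonzero.

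\textbf{How the paper gets injectivity.} It uses the intersection pairing.
\begin{itemize}
\item By Proposition~\ref{prop:cycle=trace}(3), pulling back $\langle~,~\rangle_{\lambda_I}$ along $\xi_{\s,I}\otimes\xi_{c_{\Gc}(\s),I}$ gives $\tr(\Frob,\Gamma_c(\Bun_G,\cF_{\s}\otimes\cF_{c_{\Gc}(\s)}))\cdot\counit_{V_{\lambda_I,\s}}$.
\item By Corollary~\ref{cor:intnondegenerate}, the scalar equals $|\pi_1(H)|^2L(1,\frhc_{\s_H})^2$. It is nonzero because $\frhc_{\s_H}$ is pure of weight $0$.
\item $\counit_{V_{\lambda_I,\s}}$ is a perfect pairing, so $\xi_{\s,I}$ is injective.
\end{itemize}
The abstract isomorphism then forces $\xi_{\s,I}$ to be an isomorphism.

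This supplies the same nonvanishing input your spectral route would need. It does so without having to prove any compatibility between the eigensheaf construction and the spectral identification.
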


\begin{proof}
    The map $\xi_{\s,I}$ is constructed via a Weil Hecke eigensheaf $\cF_{\s}\in \Shv(\Bun_G)$ with eigenvalue $\s$ as in \cite[(5.6)]{wang2025specialcycleshtukascategorical}, whose existence is guaranteed by \cite[Theorem\,0.1.4]{gaitsgory2025geometriclanglandspositivecharacteristic} and \cite[Main Theorem\,0.2.6]{arinkin2022automorphicfunctionstracefrobenius} under the assumption that $\s$ is automorphic. It is straightforward to verify that the image of $\xi_{\s,I}$ is contained in the $\s$-isotypic component $(l_{I,!}\IC_{\lambda_I})_{\s}$. By Corollary \ref{cor:intnondegenerate}, the map $\xi_{\s,I}$ is injective. By \cite[Main Theorem\,0.3.10]{arinkin2022automorphicfunctionstracefrobenius}, we know that the source and the target are abstractly isomorphic, which implies that $\xi_{\s,I}$ is an isomorphism.
\end{proof}

Note that the isomorphism $\xi_{\s,I}$ depends on the choice of $\cF_{\s}$, which we will always fix once and for all. Moreover, for the $\Gc$-local system $c_{\Gc}(\s)$ obtained by applying the Cartan involution to $\s$, we will fix $\cF_{c_{\Gc}(\s)}=\mathbb{D}^{\mathrm{Ver}}(\cF_{\s})$.

\subsection{Diagonal cycle and intersection pairing}
Now we specialize to the group case and study the diagonal cycle and intersection pairing. 

We consider the case $G=H\times H$, and $\s=(\s_H,c_{\Hc}(\s_H))$ for some automorphic $\s_H\in \Loc_{\Hc}^{\arith}(k)$. We choose a Weil Hecke eigensheaf $\cF_{\s}$ as in \eqref{eq:eigensheaf}.

For $\lambda_{H,I}\in X_*(T_H)_+^I$ and $\lambda_I=(\lambda_{H,I},\lambda_{H,I})\in X_*(T)_+^I$, consider the diagonal map $\D_{\Sht}:\Sht_{H,I}\to \Sht_{G,I}$. Consider the fundamental class \begin{equation}
    [\Sht_{H,\leq\lambda_{H,I}}/C^I]\in \Hom^0(l_{H,I,!}\D_{\Sht}^*\IC_{\lambda_I}, \uk_{C^I})\cong \Hom^0(l_{H,I,!}\uk_{\Sht_{H,\leq\lambda_{H,I}}}\langle 2 d_{\lambda_{H,I}}\rangle, \uk_{C^I} )
\end{equation} where $l_{H,I}:\Sht_{H,I}\to C^I$ is the map remembering the legs. Define the diagonal cycle class to be \begin{equation}
    \D_{\Sht,!}[\Sht_{H,\leq\lambda_{H,I}}/C^I]\in \Hom^0(l_{I,!}\IC_{\lambda_I}, \uk_{C^I})\cong IH^{d_{\lambda_I}+2r}_c(\Sht_{G,\leq\lambda_I})(d_{\lambda_{H,I}}+r)^*
.\end{equation}

Consider the diagonal map $\D_{\Sht,2}:\Sht_{G,I}\to\Sht_{G\times G,I}$. The same construction as above gives us the diagonal cycle class (i.e. intersection pairing) \begin{equation}\label{eq:intpairing}
    \langle~,~\rangle_{\lambda_I}:=\D_{\Sht,2,!}[\Sht_{G,\leq\lambda_I}/C^I]\in \Hom^0((l_{I,!}\IC_{\lambda_I})^{\otimes 2}, \uk_{C^I}).
\end{equation} It induces the intersection pairing \eqref{eq:intpairingintro} by taking cohomology and cup product.

\subsection{Intersection number as a trace}

We would like to study the $\s$-isotypic part of various cycle classes defined above: \begin{equation}\begin{split}
    (\D_{\Sht,!}[\Sht_{H,\leq\lambda_{H,I}}/C^I])_{\s}:=(\D_{\Sht,!}[\Sht_{H,\leq\lambda_{H,I}}/C^I])|_{\xi_{\s,I}} \\ \in \Hom^0(V_{\lambda_I,\s}, k) \cong H^{2r}(C^I,V_{\lambda_I,\s})(r)^*
\end{split}\end{equation}
\begin{equation}\begin{split}
    \Bigl(\D_{\Sht,!}\bigl([\Sht_{H,\leq\lambda_{H,I}}/C^I]\cup \prod_{i=1}^r c_1(\cL_{\det,i})\bigr)\Bigr)_{\s}:=\Bigl(\D_{\Sht,!}\bigl([\Sht_{H,\leq\lambda_{H,I}}/C^I]\cup \prod_{i=1}^r c_1(\cL_{\det,i})\bigr)\Bigr)\Big|_{\xi_{\s,I}} \\ \in \Hom^0(V_{\lambda_I,\s}, k\langle 2r\rangle) \cong H^0(C^I,V_{\lambda_I,\s})^*
\end{split}\end{equation}
\begin{equation}
    \langle~,~\rangle_{\lambda_I,\s}:=\langle~,~\rangle_{\lambda_I}|_{\xi_{\s,I}\otimes \xi_{c_{\Gc}(\s),I}}\in \Hom^0(V_{\lambda_I,\s}\otimes V_{\lambda_I,c_{\Gc}(\s)}, k)\cong \Hom^0(V_{\lambda_I,\s}\otimes V_{-w_0\lambda_I,\s}, k)
\end{equation} where $c_{\Gc}:\Gc\to \Gc$ is the Cartan involution. To do this, consider \begin{equation}
    z_{\lambda_I,\s} := \tr(\Frob\circ \frd_{\lambda_I,\s}, \int_X\cF_{\s})\in H^{2r}(C^I,V_{\lambda_I,\s})(r)^*
\end{equation}
\begin{equation}
    z_{\det,\lambda_I,\s} := \tr(\Frob\circ \frd_{\det,\lambda_I,\s}, \int_X\cF_{\s} )\in H^0(C^I, V_{\lambda_I,\s})^*.
\end{equation} Also, there is a canonical counit \begin{equation}
    \counit_{V_{\lambda_I,\s}}:V_{\lambda_I,\s}\otimes V_{-w_0\lambda_I,\s}\to \uk_{C^I}
\end{equation} induced by $\counit_{V_{\lambda_I}}:V_{\lambda_I}\otimes V_{-w_0\lambda_I}\to k$ given by the Satake cycle.

\begin{prop}\label{prop:cycle=trace}
    The following holds:
    \begin{enumerate}
        \item $(\D_{\Sht,!}[\Sht_{H,\leq\lambda_{H,I}}/C^I])_{\s} = z_{\lambda_I,\s}\in H^{2r}(C^I,V_{\lambda_I,\s})(r)^*$.
        \item $\Bigl(\D_{\Sht,!}\bigl([\Sht_{H,\leq\lambda_{H,I}}/C^I]\cup \prod_{i=1}^r c_1(\cL_{\det,i})\bigr)\Bigr)_{\s} = z_{\det,\lambda_I,\s}\in H^0(C^I, V_{\lambda_I,\s})^*$.
        \item $\langle~,~\rangle_{\lambda_I,\s}=\tr(\Frob, \Gamma_c(\Bun_G,\cF_{\s}\otimes \cF_{c_{\Gc}(\s)}))\cdot \counit_{V_{\lambda_I,\s}}\in \Hom^0(V_{\lambda_I,\s}\otimes V_{-w_0\lambda_I,\s}, \uk_{C^I}).$
    \end{enumerate}
\end{prop}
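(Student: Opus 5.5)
The plan is to deduce all three identities from the categorical trace formalism of \cite{wang2025specialcycleshtukascategorical}. In that formalism, a cycle class on $\Sht_{G,I}$ obtained from a cohomological correspondence on $\Bun_G$ is the Frobenius-twisted trace of that correspondence. The $\s$-isotypic projection is then computed by evaluating on a Hecke eigensheaf $\cF_{\s}$, which is exactly how $\xi_{\s,I}$ was built in Proposition \ref{prop:cohshtuka}.

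\textbf{Step 1: the unrestricted cycle class as a trace.} Write $\D_{\Sht}$ as the base change of $\D^{\Hk}:\Hk_{H,I}\to\Hk_{G,I}$ along the Frobenius-twisted diagonal of $\Bun_G$. The shtuka diagram is Cartesian for both $H$ and $G$, and $\D$ commutes with Frobenius. Hence the ``shtuka trace'' construction of \cite[\S2.2]{wang2026arithmeticvolumeshtukaslanglands} sends the global correspondence $\frd_{\lambda_I}$ of \eqref{eq:diagcorglobmulti}, acting on $\cP_X\boxtimes\uk_{C^I}$, to the class $\D_{\Sht,!}[\Sht_{H,\leq\lambda_{H,I}}/C^I]$. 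Likewise $\frd_{\det,\lambda_I}$ goes to the class with $\prod_i c_1(\cL_{\det,i})$ inserted. This uses that $\cL_{\det,i}$ on $\Sht$ is pulled back from $\Hk_{H,I}$, so the cup product commutes with the base change.

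\textbf{Step 2: $\s$-isotypic projection.} The map $\xi_{\s,I}$ of \cite[(5.6)]{wang2025specialcycleshtukascategorical} comes from the eigensheaf $\cF_{\s}$. The pairing of $\xi_{\s,I}$ with a trace-type class from a correspondence $\frc$ on a sheaf $\cP$ equals the Frobenius trace on $\Hom(\cF_{\s},\cP)$, or dually on $\Gamma_c(\Bun_G,\cF_{\s}\otimes\mathbb{D}\cP)$, of the induced $L$-observable. I would invoke \cite[Lemma~5.15 and the argument around it]{wang2025specialcycleshtukascategorical}, or Lefschetz trace functoriality for cohomological correspondences. For $\cP=\cP_X=\D_!\uk_{\Bun_H}$, projection formula gives $\Gamma_c(\Bun_G,\cF_{\s}\otimes\cP_X)=\int_X\cF_{\s}$, and the induced endomorphism is $\frd_{\lambda_I,\s}$, respectively $\frd_{\det,\lambda_I,\s}$. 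This yields (1) and (2), with $z_{\lambda_I,\s}$ and $z_{\det,\lambda_I,\s}$ as defined.

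\textbf{Step 3: the intersection pairing.} Apply the same argument to $G\times G$ with $X=G$ (the group case for $G$) and the eigensheaf $\cF_{\s}\boxtimes\cF_{c_{\Gc}(\s)}$. The trace then lives on $\Gamma_c(\Bun_G,\cF_{\s}\otimes\cF_{c_{\Gc}(\s)})$, and the diagonal correspondence acts through the Satake counit $\counit_{V_{\lambda_I}}$. The reason is that the diagonal fundamental class of $\Hk_{G,\leq\lambda_I}$ in $\Hk_{G\times G}$, evaluated on eigensheaves, is the eigen-isomorphism followed by the pairing $V_{\lambda_I}\otimes V_{-w_0\lambda_I}\to k$. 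This part is just the statement that Hecke functors are adjoint to their duals. In this case the endomorphism is scalar times $\counit_{V_{\lambda_I,\s}}$, which gives (3).

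\textbf{Main obstacle.} The hardest step is Step 2: making precise that the isotypic projection of a trace class equals the trace on the eigen-part. This requires identifying $\xi_{\s,I}$ with the coevaluation from $\cF_{\s}$ in the categorical trace. It also requires tracking Tate twists, shifts and signs, and handling the failure of finiteness on $\Bun_H$, which is resolved by compact generation and the restricted-variation spectral decomposition. I would first reduce to the single-leg, connected-component case and check it against \cite[\S5]{wang2025specialcycleshtukascategorical}.
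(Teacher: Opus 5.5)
\textbf{Verdict: genuine gap.} Your Steps 1 and 2 reproduce the formal skeleton of the argument the paper imports from \cite[Proposition~6.11]{wang2025specialcycleshtukascategorical}. Your handling of (2) is exactly the paper's: the classes $c_1(\cL_{\det,i})$ are pulled back from $\Hk_{H,I}$, so the shtuka trace is linear in them. What is missing is the one non-formal input. The paper says the argument of loc.\ cit.\ relies on \cite[Assumption~5.13]{wang2025specialcycleshtukascategorical}, which was previously verified only for $\GL_n$. In general it is supplied by the Atiyah--Bott-type Corollary~\ref{cor:AB}: $H^*(\int_{X,e}\cF_{\s})$ is freely generated from $\eta_{e,\s}$ by the degree $-1$ diagonal $L$-observables. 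Your proposal never invokes this or any substitute. The obstacles you flag (identifying $\xi_{\s,I}$ with a coevaluation, Tate twists, finiteness) are bookkeeping by comparison.

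The gap shows up concretely in Step 3. To get (3), take $x\in H^{2r}(C^I,V_{\lambda_I,\s}\otimes V_{-w_0\lambda_I,\s})(r)$. You need the degree-$0$ part of the diagonal $L$-observable of the group case for $G$, evaluated on $x$, to act on \emph{all} of $H^*\Gamma_c(\Bun_G,\cF_{\s}\otimes\cF_{c_{\Gc}(\s)})$ as $\counit_{V_{\lambda_I,\s}}(x)$ times the identity. This space is spread over many cohomological degrees (Proposition~\ref{prop:dim}), and without the claim the Frobenius trace is not $\counit\cdot\tr(\Frob,\ldots)$.

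Adjointness of Hecke functors together with the eigen-isomorphisms does not give this. At best it controls the action on the canonical class $\eta_{e,\s}$, which is how \cite[Lemma~5.15]{wang2025specialcycleshtukascategorical} enters Proposition~\ref{prop:diagh0}. It says nothing about how a Hecke functor acts on the higher self-extensions $\Hom^{*}(\cF_{\s},\cF_{\s})$, i.e.\ on the other graded pieces of the period. Indeed the observable is \emph{not} simply ``scalar times counit'': its $H^{-1}$ component is nonzero by Theorem~\ref{thm:clifford2}.

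The paper obtains the needed statement as Proposition~\ref{prop:diagh0}, which combines three facts:
\begin{itemize}
\item the value of the observable on $\eta_{e,\s}$;
\item the mutual commutativity of diagonal observables;
\item the cyclicity of $\eta_{e,\s}$ from Corollary~\ref{cor:AB}.
\end{itemize}
That corollary in turn rests on the Clifford relation (Theorem~\ref{thm:clifford2}, via the local commutator computation and Proposition~\ref{prop:globalcommutator}) and on the dimension count of Proposition~\ref{prop:dim}. To repair your proof, insert this chain. Alternatively, give an independent argument, for example a spectral computation of the Hecke action on $\Hom^{*}(\cF_{\s},\cF_{\s})$ through the geometric Langlands equivalence.
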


\begin{proof}
    For (1) and (3), when $H=\GL_n$, they follow from \cite[Proposition\,6.11]{wang2025specialcycleshtukascategorical}. The proof in \textit{loc. cit.} relies on \cite[Assumption\,5.13]{wang2025specialcycleshtukascategorical}, which is only verified when $H=\GL_n$ in \textit{loc. cit.} but now confirmed in general by Corollary \ref{cor:AB}. Therefore, the same argument as in \textit{loc. cit.} works in general.

    For (2), one can slightly modify the proof in \textit{loc. cit.} for (1). We adopt the notations in \textit{loc. cit.} One replaces the identity in \cite[Proposition\,6.11, Theorem\,3.3]{wang2025specialcycleshtukascategorical} by \begin{equation}
    \D_{\Sht,!}\bigl([\Sht_{H,\leq\lambda_{H,I}}/C^I]\cup \prod_{i=1}^r c_1(\cL_{\det,i})\bigr)=\tr_{\Sht,C^I}(\frd_{\det,\lambda_I}).
\end{equation} Then the same argument as in \textit{loc. cit.} works. To prove this identity, one only needs to prove \begin{equation}
[\Sht_{H,\leq\lambda_{H,I}}/C^I]\cup \prod_{i=1}^r c_1(\cL_{\det,i})=\tr_{\Sht,C^I}([\Hk_{H,\leq\lambda_{H,I}}/\Bun_H\times C^I]\cup \prod_{i=1}^r c_1(\cL_{\det,i})).
\end{equation} This follows from the identity \begin{equation}
[\Sht_{H,\leq\lambda_{H,I}}/C^I]=\tr_{\Sht,C^I}([\Hk_{H,\leq\lambda_{H,I}}/\Bun_H\times C^I])
\end{equation} in the proof of \cite[Theorem\,3.3]{wang2025specialcycleshtukascategorical} and the fact that $\tr_{\Sht,C^I}$ is linear over $\Gamma(\Hk_{H,I},\uk_{\Hk_{H,I}})$.

\end{proof}

The following corollary is a refinement of Theorem \ref{thm:intnondegintro}.
\begin{cor}\label{cor:intnondegenerate}
    One has \begin{equation}
        \langle~,~\rangle_{\lambda_I,\s}=|\pi_1(H)|^2L(1, \frhc_{\s_H})^2\cdot  \counit_{V_{\lambda_I,\s}}\in \Hom^0(V_{\lambda_I,\s}\otimes V_{-w_0\lambda_I,\s}, \uk_{C^I})
    .\end{equation} In particular, the intersection pairing $\langle~,~\rangle_{\lambda_I,\s}\in \Hom^0(V_{\lambda_I,\s}\otimes V_{-w_0\lambda_I,\s}, \uk_{C^I})$ is nondegenerate.
\end{cor}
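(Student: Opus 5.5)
By Proposition~\ref{prop:cycle=trace}(3), $\langle~,~\rangle_{\lambda_I,\s}$ equals the scalar $\tr(\Frob,\Gamma_c(\Bun_G,\cF_{\s}\otimes\cF_{c_{\Gc}(\s)}))$ times $\counit_{V_{\lambda_I,\s}}$. So it suffices to show that this trace equals $|\pi_1(H)|^2L(1,\frhc_{\s_H})^2$, and that this number is nonzero. Here $\tr$ is the super-trace, since $H^i$ has parity $i\bmod 2$ by our conventions.

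\textbf{Step 1: factor the trace.} With the fixed choices $\cF_{\s}=\cF_{\s_H}\boxtimes\mathbb{D}^{\mathrm{Ver}}(\cF_{\s_H})$ and $\cF_{c_{\Gc}(\s)}=\mathbb{D}^{\mathrm{Ver}}(\cF_{\s})=\mathbb{D}^{\mathrm{Ver}}(\cF_{\s_H})\boxtimes\cF_{\s_H}$, and using $\Bun_G=\Bun_H\times\Bun_H$ with the Künneth formula, we get
\[
\Gamma_c(\Bun_G,\cF_{\s}\otimes\cF_{c_{\Gc}(\s)})\cong\Gamma_c(\Bun_H,\cF_{\s_H}\otimes\mathbb{D}^{\mathrm{Ver}}\cF_{\s_H})^{\otimes 2}.
\]
This isomorphism is Frobenius-equivariant. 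Each tensor factor is $\int_X\cF_{\s}=\bigoplus_{e\in\pi_1(H)}\int_{X,e}\cF_{\s}$, so the trace equals $\bigl(\sum_e\tr(\Frob,H^*(\int_{X,e}\cF_{\s}))\bigr)^2$.

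\textbf{Step 2: compute each component.} Fix $e$ and choose $0\neq\lambda_H$ with $e_\lambda=0$. Apply Corollary~\ref{cor:AB}, which gives
\[
H^{-*}\Bigl(\int_{X,e}\cF_{\s}\Bigr)\cong\wedge^*\bigl(H^1(C,\frhc^*_{\s_H})(1)\bigr).
\]
I claim this isomorphism is Frobenius-equivariant. The class $\eta_{e,\s}$ is Frobenius-fixed, since it comes from the canonical trivialization by the fundamental class. The diagonal $L$-observables come from cycles defined over $\FF_q$, so they commute with Frobenius. It remains to make the restriction to the summand $H^1(C,\frhc^*_{\s_H})(1)$ equivariant. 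For this, either choose the splitting Frobenius-equivariantly, which is possible after semisimplifying (only semisimplification matters for traces), or use Proposition~\ref{prop:diagh-1} to pass to the factored map $\overline{H^{-1}(\frd_{\lambda,\s})}$, which is canonical.

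The super-trace of $\Frob$ on an exterior algebra $\wedge^*W$ with $W$ odd is $\det(1-\Frob\mid W)$. Here $W=H^1(C,\frhc^*_{\s_H})(1)\cong H^1(C,\frhc_{\s_H})(1)$ via $\kappa_{\min}$. Because $\s_H$ is geometrically strongly irreducible, $H^0$ and $H^2$ of $\frhc_{\s_H}$ vanish. Hence $L(s,\frhc_{\s_H})=\det(1-q^{-s}\Frob\mid H^1(C,\frhc_{\s_H}))$, and the Tate twist $(1)$ turns this determinant into $L(1,\frhc_{\s_H})$. Summing over the $|\pi_1(H)|$ components and squaring gives the formula.

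\textbf{Step 3: nondegeneracy.} This reduces to $L(1,\frhc_{\s_H})\neq0$. By purity (Deligne), $\frhc_{\s_H}$ is pure of weight $0$, so the Frobenius eigenvalues $\alpha$ on $H^1$ satisfy $|\alpha|=q^{1/2}$. Thus no factor $1-\alpha q^{-1}$ vanishes. Since $\counit_{V_{\lambda_I,\s}}$ is a perfect pairing, the pairing $\langle~,~\rangle_{\lambda_I,\s}$ is nondegenerate.

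\textbf{Main obstacle.} The delicate point is the bookkeeping of conventions. We need the Frobenius-equivariance in Step 2, the correct sign from parity (super-trace versus trace), and the exact Tate twist, so that the answer is $L(1)$ rather than $L(0)$ or its inverse. I would first test these conventions on the case $g=0$, where everything is $k$ and the formula reduces to $|\pi_1(H)|^2$.
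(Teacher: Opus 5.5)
Your proposal is correct and is essentially the argument the paper invokes by deferring to \cite[\S6.3.2]{wang2025specialcycleshtukascategorical}: start from Proposition~\ref{prop:cycle=trace}(3), evaluate the Frobenius super-trace on each component via the Frobenius-equivariant exterior-algebra description of Corollary~\ref{cor:AB} to get $\det(1-\Frob)=L(1,\frhc_{\s_H})$, and conclude by purity (your K\"unneth splitting into two copies of $\int_X\cF_{\s}$ just keeps you within the almost simple setting of that corollary; the total is $|\pi_1(G)|\,L(1,\frgc_{\s})$). One small correction in Step~3: the purity of weight $0$ of $\frhc_{\s_H}$ itself does not come from Deligne's Weil~II, which only transfers weights from the sheaf to $H^1$; the paper takes it from \cite[Proposition\,25.4.6]{arinkin2022stacklocalsystemsrestricted}.
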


\begin{proof}
    The proof of the identity is identical to \cite[\S6.3.2]{wang2025specialcycleshtukascategorical}. By \cite[Proposition\,25.4.6]{arinkin2022stacklocalsystemsrestricted}, we know that the Weil local system $\frhc_{\s_H}$ is pure of weight $0$. Therefore, we have $L(1,\frhc_{\s_H})\neq 0$, which implies the nondegeneracy of the intersection pairing.
\end{proof}

\subsection{Relation with the intersection observable}
Now we relate the desired intersection number in Theorem \ref{thm:main} to the trace of the intersection observable introduced in \S\ref{sec:intobs}.

For $\lambda_{H,I}=(\lambda_{H,1},\cdots,\lambda_{H,r})\in X_*(T_H)_+^I$ and $\lambda_I=(\lambda_{H,I},\lambda_{H,I})\in X_*(T)_+^I$, consider the multi-leg intersection observable \begin{equation}
    \Gamma_{\lambda_I,\s}:= \Gamma_{\lambda_1,\s}\circ \cdots\circ \Gamma_{\lambda_r,\s}\in \End(\mathbb{O}_{X,\s}) .\end{equation} 

\begin{lemma}\label{lem:intnumber=trace}
    The following identity holds: \begin{equation}\begin{split}
        \Bigl\langle \Bigl(\D_{\Sht,!}\bigl([\Sht_{H,\leq\lambda_{H,I}}/C^I]\cup \prod_{i=1}^r c_1(\cL_{\det,i})\bigr)\Bigr)_{\s} , (\D_{\Sht,!}[\Sht_{H,\leq\lambda_{H,I}}/C^I])_{c_{\Gc}(\s)} \Bigr\rangle^*_{\lambda_I,\s} \\ =  \frac{q^{-(g-1)\dim H}}{|\pi_1(H)|^2L(1, \frhc_{\s_H})^2} \tr(\Frob\circ H^*\Gamma_{\lambda_I,\s}, H^*\mathbb{O}_{X,\s}) \end{split} 
    \end{equation} where the pairing $\langle-,-\rangle^*_{\lambda_I,\s}$ is the dual pairing of $\langle-,-\rangle_{\lambda_I,\s}$ as in \eqref{eq:intpairingisotypicintrodual}.
\end{lemma}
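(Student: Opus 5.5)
The plan is to combine the three identifications in Proposition~\ref{prop:cycle=trace} with the explicit form of the intersection pairing in Corollary~\ref{cor:intnondegenerate}. Together they rewrite the left-hand side as a contraction of two trace functionals.

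First, Corollary~\ref{cor:intnondegenerate} says that $\langle~,~\rangle_{\lambda_I,\s}$ is $|\pi_1(H)|^2L(1,\frhc_{\s_H})^2$ times the canonical counit $\counit_{V_{\lambda_I,\s}}$. Hence the dual pairing $\langle~,~\rangle^*_{\lambda_I,\s}$ is the inverse scalar times the pairing induced by the unit $\unit_{V_{-w_0\lambda_I,\s}}$. On global sections, and after Poincaré duality on $C^I$, this is pairing against the canonical element $\unit_{\Gamma(C^I,V_{-w_0\lambda_I,\s})}$. This produces the factor $|\pi_1(H)|^{-2}L(1,\frhc_{\s_H})^{-2}$.

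Next, Proposition~\ref{prop:cycle=trace}(1),(2) identify the two cycle classes with $z_{\det,\lambda_I,\s}$ and $z_{-w_0\lambda_I,\s}$. For the latter, we use the fixed choice $\cF_{c_{\Gc}(\s)}=\mathbb{D}^{\mathrm{Ver}}(\cF_\s)$, which identifies the relevant period integral with the dual of $\int_X\cF_\s$ up to the shift and twist of Corollary~\ref{cor:duality}. The twist $(-(g-1)\dim H)$ there is where the factor $q^{-(g-1)\dim H}$ arises. After this rewriting, $z_{-w_0\lambda_I,\s}$ becomes a trace of $\Frob\circ\frd_{-w_0\lambda_I,\s}$ on the same space $\int_X\cF_\s$.

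The core step is the formal identity
\[\sum \tr(\Frob\circ A_i,M)\,\tr(\Frob\circ B_i,M)=\tr\bigl(\Frob\circ(\rml(A)\circ \rmr(B)),\End(M)\bigr),\]
applied with $M=H^*(\int_X\cF_\s)$ and $\End(M)=H^*\mathbb{O}_{X,\s}$. To justify it, write $\End(M)\cong M\otimes M^*$ and note that $\rml(a)\rmr(b)$ acts by $x\mapsto axb$. Its trace is then $\tr(a)\tr(b)$, up to the Koszul sign built into $\rmr$, and that sign is exactly what supertraces require. Summing over the canonical element $\unit_{\Gamma(C^I,V_{-w_0\lambda_I,\s})}$ gives $\rmm\circ(\rml\otimes\rmr)(\frd_{\det,\lambda_I,\s}\otimes\frd_{-w_0\lambda_I,\s})(\unit)$. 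Finally, I will factor this multi-leg expression as $\Gamma_{\lambda_1,\s}\circ\cdots\circ\Gamma_{\lambda_r,\s}$. This uses \eqref{eq:Lobsassoc} to write $\frd_{\lambda_I,\s}$ and $\frd_{\det,\lambda_I,\s}$ as ordered products of single-leg observables, and uses the fact that the unit over $C^I$ is the external product of single-leg units. For a product $a_1\cdots a_r$ on the left and $b_r\cdots b_1$ on the right, $\rml\otimes\rmr$ nests into a composite of the single-leg operators.

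The main obstacle is bookkeeping, in two places. The first is the Frobenius compatibility: the Frobenius structures on $\int_X\cF_\s$ and $\int_X\mathbb{D}^{\mathrm{Ver}}\cF_\s$ must be dual, so that the duality twist produces exactly $q^{-(g-1)\dim H}$ and no further scalar. The second is the ordering of legs on the right multiplication, where one must check that reversing the order is consistent with the factorization into $\Gamma_{\lambda_i,\s}$; the signs here are governed by the super conventions. I would first verify the case $r=1$ carefully, then argue the multi-leg case by induction using \eqref{eq:Lobsassoc}.
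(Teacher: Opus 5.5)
Your skeleton matches the paper's. You invert the scalar of Corollary~\ref{cor:intnondegenerate}, rewrite both cycle classes as traces via Proposition~\ref{prop:cycle=trace}, get $q^{-(g-1)\dim H}$ from Corollary~\ref{cor:duality}, and recognize a trace of $\rmm\circ(\rml\otimes\rmr)(\cdots)(\unit)$ on $H^*\mathbb{O}_{X,\s}\cong M\otimes M^*$. The gap is the duality step.

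Proposition~\ref{prop:cycle=trace}(1) at $c_{\Gc}(\s)$ gives $z_{\lambda_I,c_{\Gc}(\s)}$. This is a trace of $\Frob\circ\frd_{\lambda_I,c_{\Gc}(\s)}$ on $\int_X\cF_{c_{\Gc}(\s)}$, not a trace of $\frd_{-w_0\lambda_I,\s}$ on $\int_X\cF_\s$. To convert one into the other, you would need an isomorphism $H^*(\int_X\cF_{c_{\Gc}(\s)})\cong H^*(\int_X\cF_\s)^*$ (up to shift and twist) intertwining $\frd_{\lambda_I,c_{\Gc}(\s)}$ with the transpose of $\frd_{-w_0\lambda_I,\s}$. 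Corollary~\ref{cor:duality} does not provide this. It is only an abstract isomorphism of graded Frobenius modules, extracted from the exterior-algebra description in Corollary~\ref{cor:AB}. The choice $\cF_{c_{\Gc}(\s)}=\mathbb{D}^{\mathrm{Ver}}(\cF_\s)$ does not supply it either: $\D^*\mathbb{D}^{\mathrm{Ver}}(\cF_\s)$ and $\D^*\cF_\s$ agree up to swapping the two tensor factors, so these period integrals are isomorphic, not Verdier dual.

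The missing input is Proposition~\ref{prop:diagh0}, namely $H^0(\frd_{\lambda,\s})=H^2(\counit_{V_{\lambda_H},\s_H})\cdot T_{e_\lambda}$, which the paper uses twice.
\begin{itemize}
\item The two cycle classes live in $H^0(C^I,V_{\lambda_I,\s})^*$ and $H^{2r}(C^I,V_{-w_0\lambda_I,\s})(r)^*$. So only the component $\unit_{\Gamma(C^I,V_{-w_0\lambda_I,\s})}^0$ of the unit contributes, and only $H^0(\frd_{\lambda_I,c_{\Gc}(\s)})$ enters.
\item By Proposition~\ref{prop:diagh0}, this operator is a counit times a translation. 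Hence the $c_{\Gc}(\s)$-factor is a scalar times a trace of Frobenius (composed with a translation) on $H^*(\int_X\cF_{c_{\Gc}(\s)})$.
\item For such a trace an abstract Frobenius-equivariant isomorphism suffices. Corollary~\ref{cor:duality} together with \eqref{eq:counitcartan} moves it to $H^*(\int_X\cF_\s)^*$ with contragredient Frobenius, at the cost of $q^{-(g-1)\dim H}$.
\item Proposition~\ref{prop:diagh0} is then applied again on the $\s$ side. This replaces $\counit_{V_{-w_0\lambda_{H,I},\s_H}}$ by $H^0\frd_{-w_0\lambda_I,\s}$ acting through $\rmr$.
\end{itemize}
Only after this does one extend from $H^0$ and $\unit^0$ to $H^*$ and the full unit. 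Your termwise trace identity handles that extension correctly, since terms of nonzero degree have zero trace; the paper phrases this as an equality of semisimplifications.

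A smaller point concerns the reversed leg order in $\rmr$ when you factor into $\Gamma_{\lambda_1,\s}\circ\cdots\circ\Gamma_{\lambda_r,\s}$. This is resolved by the mutual commutativity of diagonal $L$-observables \eqref{eq:commutator0}, not by sign bookkeeping alone.
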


\begin{proof}

Consider the unit
\[
    \unit_{V_{-w_0\lambda_I}}:k\to V_{\lambda_I}\otimes V_{-w_0\lambda_I}
\]
given by the Satake cycle. It induces an element
\[
    \unit_{V_{-w_0\lambda_I,\s}}
    \in H^0(C^I,V_{\lambda_I,\s}\otimes V_{-w_0\lambda_I,\s}),
\]
and hence an element
\[
    \unit_{\Gamma(C^I,V_{-w_0\lambda_I,\s})}
    \in H^0(\Gamma(C^I,V_{\lambda_I,\s})\otimes
    \Gamma(C^I,V_{-w_0\lambda_I,\s})\langle 2r\rangle).
\]
Since
\[
H^0(\Gamma(C^I,V_{\lambda_I,\s})\otimes \Gamma(C^I,V_{-w_0\lambda_I,\s})\langle 2r\rangle)
\cong
\bigoplus_{i=0}^{2r} H^i(C^I,V_{\lambda_I,\s})\otimes H^{2r-i}(C^I,V_{-w_0\lambda_I,\s})(r),
\]
we have a decomposition
\[
    \unit_{\Gamma(C^I,V_{-w_0\lambda_I,\s})}
    =\sum_{i=0}^{2r} \unit_{\Gamma(C^I,V_{-w_0\lambda_I,\s})}^i,
\]
where
\[
    \unit_{\Gamma(C^I,V_{-w_0\lambda_I,\s})}^i
    \in H^i(C^I,V_{\lambda_I,\s})\otimes H^{2r-i}(C^I,V_{-w_0\lambda_I,\s})(r).
\]
Consider also the counits \[\counit_{V_{\lambda_{H,I}}}:V_{\lambda_{H,I}}\otimes V_{-w_0\lambda_{H,I}}\to k\] \[\counit_{V_{-w_0\lambda_{H,I}}}:V_{-w_0\lambda_{H,I}}\otimes V_{\lambda_{H,I}}\to k.\]
They induce  \begin{equation}\label{eq:counitcartan} \counit_{V_{\lambda_{H,I},c_{\Hc}(\s_H)}} = \counit_{V_{-w_0\lambda_{H,I},\s_H}} :H^{2r}(C^I, V_{\lambda_I,c_{\Gc}(\s)})(r)\cong H^{2r}(C^I, V_{-w_0\lambda_I,\s})(r) \to k.\end{equation}

By Corollary \ref{cor:intnondegenerate}, we have \begin{equation}\label{eq:int*formula}\langle-,-\rangle_{\lambda_I,\s}^* = \frac{1}{|\pi_1(H)|^2L(1,\frhc_{\s_H})^2}\unit_{\Gamma(C^I,V_{-w_0\lambda_I,\s})}\in H^0(\Gamma(C^I,V_{\lambda_I,\s})\otimes \Gamma(C^I,V_{-w_0\lambda_I,\s})\langle 2r\rangle).\end{equation} We have \[\adjustbox{max width=\linewidth}{%
$%
\begin{aligned}
    & \Bigl\langle \Bigl(\D_{\Sht,!}\bigl([\Sht_{H,\leq\lambda_{H,I}}/C^I]\cup \prod_{i=1}^r c_1(\cL_{\det,i})\bigr)\Bigr)_{\s} , (\D_{\Sht,!}[\Sht_{H,\leq\lambda_{H,I}}/C^I])_{c_{\Gc}(\s)} \Bigr\rangle^*_{\lambda_I,\s} \\
     =& \frac{1}{|\pi_1(H)|^2L(1, \frhc_{\s_H})^2} (z_{\det,\lambda_I,\s}\otimes z_{\lambda_I,c_{\Gc}(\s)})(\unit_{\Gamma(C^I,V_{-w_0\lambda_I,\s})}^0) \\ 
     =& \frac{1}{|\pi_1(H)|^2L(1, \frhc_{\s_H})^2} \tr(\Frob\circ (H^0\frd_{\det,\lambda_I,\s}\otimes H^0\frd_{\lambda_I,c_{\Gc}(\s)})(\unit_{\Gamma(C^I,V_{-w_0\lambda_I,\s})}^0), H^*(\int_X\cF_{\s})\otimes H^*( \int_X\cF_{c_{\Gc}(\s)})) \\ 
     =& \frac{1}{|\pi_1(H)|^2L(1, \frhc_{\s_H})^2} \tr(\Frob\circ (H^0\frd_{\det,\lambda_I,\s}\otimes \counit_{V_{\lambda_{H,I},c_{\Hc}(\s_H)}})(\unit_{\Gamma(C^I,V_{-w_0\lambda_I,\s})}^0), H^*(\int_X\cF_{\s})\otimes H^*( \int_X\cF_{c_{\Gc}(\s)})) \\ 
     =& \frac{q^{-(g-1)\dim H}}{|\pi_1(H)|^2L(1, \frhc_{\s_H})^2} \tr(\Frob\circ (H^0\frd_{\det,\lambda_I,\s}\otimes\counit_{V_{-w_0\lambda_{H,I},\s_H}})(\unit_{\Gamma(C^I,V_{-w_0\lambda_I,\s})}^0), H^*(\int_X\cF_{\s})\otimes H^*( \int_X\cF_{\s})^*) \\ 
     =& \frac{q^{-(g-1)\dim H}}{|\pi_1(H)|^2L(1, \frhc_{\s_H})^2} \tr(\Frob\circ (\rmm\circ(\rml\otimes\rmr) (H^0\frd_{\det,\lambda_I,\s}\otimes \counit_{V_{-w_0\lambda_{H,I},\s_H}})(\unit_{\Gamma(C^I,V_{-w_0\lambda_I,\s})}^0)), H^*\mathbb{O}_{X,\s}) \\ 
     =& \frac{q^{-(g-1)\dim H}}{|\pi_1(H)|^2L(1, \frhc_{\s_H})^2} \tr(\Frob\circ (\rmm\circ(\rml\otimes\rmr) (H^0\frd_{\det,\lambda_I,\s}\otimes H^0\frd_{-w_0\lambda_I,\s})(\unit_{\Gamma(C^I,V_{-w_0\lambda_I,\s})}^0)), H^*\mathbb{O}_{X,\s}) \\ 
     =& \frac{q^{-(g-1)\dim H}}{|\pi_1(H)|^2L(1, \frhc_{\s_H})^2} \tr(\Frob\circ (\rmm\circ(\rml\otimes\rmr) (H^*\frd_{\det,\lambda_I,\s}\otimes H^*\frd_{-w_0\lambda_I,\s})(\unit_{\Gamma(C^I,V_{-w_0\lambda_I,\s})})), H^*\mathbb{O}_{X,\s}) \\ 
     =& \frac{q^{-(g-1)\dim H}}{|\pi_1(H)|^2L(1, \frhc_{\s_H})^2} \tr(\Frob\circ H^*\Gamma_{\lambda_I,\s}, H^*\mathbb{O}_{X,\s}) 
\end{aligned}
$%
.}\] Here, the first equality follows from \eqref{eq:int*formula} and Proposition \ref{prop:cycle=trace}, the third and sixth equalities follow from Proposition \ref{prop:diagh0}, the fourth equality follows from Corollary \ref{cor:duality} and \eqref{eq:counitcartan}, and the seventh equality holds because the two operators involved have the same semisimplification for cohomological degree reasons. The remaining equalities are straightforward. This concludes the proof of Lemma \ref{lem:intnumber=trace}.    

\end{proof}

\subsection{Proof of the main result}
Now we are ready to prove Theorem \ref{thm:main}.
\begin{proof}[Proof of Theorem \ref{thm:main}]
We have \[\begin{split}
& \tr(\Frob\circ H^*\Gamma_{\lambda_I,\s}, H^*\mathbb{O}_{X,\s}) \\ 
=& (-1)^{d_{\lambda_{H,I}}}|\pi_1(H)|^2\tr(\Frob\circ  \prod_{i=1}^r (\nabla_{(0,-\epsilon_{\lambda_i}\id)}+(2g-2)\frac{\dim(\frhc)}{2}\epsilon_{\lambda_i}), \wedge^{\bl}(H^1(C,\frhc_{\s_H}^*)(1)\oplus H^1(C,\frhc_{\s_H}))) \\ 
=& |\pi_1(H)|^2 L(1, \frhc_{\s_H})\left(\prod_{i=1}^r(\epsilon_{\lambda_i}\frac{1}{\log q}\frac{d}{ds}+(2g-2)\frac{\dim(\frhc)}{2}\epsilon_{\lambda_i})\right)L(s, \frhc_{\s_H})\Big|_{s=0}
\end{split}
\] 
where the first equality follows from Proposition \ref{prop:gammagraded}, the second equality follows from a standard computation and $(-1)^{d_{\lambda_{H,I}}}=1$ as $\Sht_{H,\leq \lambda_{H,I}}\neq \varnothing$. 

Then the desired identity follows from combining the above identity with Lemma \ref{lem:intnumber=trace} and the fact $L(1,\frhc_{\s_H})= q^{-(g-1)\dim H}L(0,\frhc_{\s_H})$. This concludes the proof of Theorem \ref{thm:main}.

\end{proof}

\bibliographystyle{amsalpha}
\bibliography{Bibliography}
\end{document}